\newtheorem{theorem}{Theorem}[section]
\newtheorem{lemma}[theorem]{Lemma}
\newtheorem{proposition}{Proposition}
\theoremstyle{definition}
\newcommand{\RR}{\mathbb R}
\newcommand{\NN}{\mathbb N}
\DeclareMathOperator*{\essinf}{ess\,inf}
\title[Perturbations of Robin eigenvalue problems] 
      {Positive solutions for perturbations of the Robin eigenvalue problem plus an indefinite potential}
\author[N. S. Papageorgiou, V. D. R\u adulescu and D. D. Repov\v s]{}
\subjclass[2010]{Primary: 35J20; Secondary: 35J60.}
 \keywords{Indefinite and unbounded potential, Robin eigenvalue problem, sublinear perturbation, superlinear perturbation, maximum principle, positive solution, minimal positive solution.}
 \email{npapg@math.ntua.gr}
 \email{vicentiu.radulescu@imfm.si}
 \email{dusan.repovs@guest.arnes.si}
\begin{document}
\maketitle

\centerline{\scshape Nikolaos S. Papageorgiou}
\medskip
{\footnotesize
 \centerline{Department of Mathematics}
   \centerline{National Technical University,
				Zografou Campus}
   \centerline{Athens 15780, Greece}
} 

\medskip

\centerline{\scshape Vicen\c tiu D. R\u adulescu}
\medskip
{\footnotesize
 \centerline{ Department of Mathematics, Faculty of Sciences}
   \centerline{King Abdulaziz University, P.O.
Box 80203, Jeddah 21589, Saudi Arabia}
   \centerline{Department of Mathematics, University of Craiova}
   \centerline{ Street A.I. Cuza No 13,
          200585 Craiova, Romania}
}

\medskip
\centerline{\scshape Du\v san D. Repov\v s}
\medskip
{\footnotesize
 \centerline{Faculty of Education and Faculty of Mathematics and Physics}
   \centerline{University of Ljubljana, Kardeljeva plo\v{s}\v{c}ad 16}
   \centerline{SI-1000 Ljubljana, Slovenia}
}

\bigskip

\begin{abstract}
We study perturbations of the eigenvalue problem for the negative
Laplacian plus an indefinite and unbounded potential and Robin
boundary condition. First we consider the case of a sublinear
perturbation and then of a superlinear perturbation. For the first
case we show that for $\lambda<\widehat{\lambda}_{1}$
($\widehat{\lambda}_{1}$ being the principal eigenvalue) there is
one positive solution which is unique under additional conditions on
the perturbation term. For $\lambda\geq\widehat{\lambda}_{1}$ there
are no positive solutions. In the superlinear case, for
$\lambda<\widehat{\lambda}_{1}$ we have at least two positive
solutions and for $\lambda\geq\widehat{\lambda}_{1}$ there are no
positive solutions. For both cases we establish the existence of a
minimal positive solution $\bar{u}_{\lambda}$ and we investigate the
properties of the map $\lambda\mapsto\bar{u}_{\lambda}$.
\end{abstract}

\section{Introduction}\label{sec1}

Let $\Omega\subseteq\RR^{N}$ be a bounded domain with a
$C^{2}$-boundary $\partial\Omega$. In this paper we study the
following semilinear parametric Robin problem with an indefinite and
unbounded potential $\xi(z)$:
$$\left\{\begin{array}{ll}
-\Delta u(z)+\xi(z)u(z)=\lambda u(z)+f(z,u(z))\ \ \mbox{in}\ \Omega,\\ [0.3cm]
\displaystyle\frac{\partial u}{\partial n}+\beta(z)u=0\ \ \mbox{on}\ \partial\Omega,\ u\geq0.
\end{array}\right\}
\hspace{2cm} (P_{\lambda})$$

In this problem $\lambda\in\RR$ is a parameter and $\xi(\cdot)$ is a
potential function which is indefinite (that is, $\xi(\cdot)$ is
sign changing) and unbounded from below. We can think of
($P_{\lambda}$) as a perturbation of the standard eigenvalue
problem for the differential operator $u\mapsto -\Delta
u+\xi(z)u$ with Robin boundary condition. We look for positive
solutions and consider two cases: a sublinear perturbation
$f(z,\cdot)$ and a superlinear perturbation $f(z,\cdot)$. For
both cases we determine the dependence of the set of positive
solutions as the parameter $\lambda\in\RR$ varies.

We mention that the standard eigenvalue problems for the Robin
Laplacian have recently been studied by D'Agui, Marano \& Papageorgiou
\cite{4} and by Papageorgiou \& R\u{a}dulescu \cite{11}. Additional
existence and multiplicity results for parametric Robin and Neumann
problems can be found in the works of Papageorgiou \& R\u{a}dulescu
\cite{12, 13}.

Let $\widehat{\lambda}_{1}\in\RR$ be the principal eigenvalue of the
operator $u\mapsto -\Delta u+\xi(z)u$ with Robin boundary
condition. In the {\it sublinear} case (that is, when $f(z,\cdot)$ is
sublinear near $+\infty$) we show that for
$\lambda\geq\widehat{\lambda}_{1}$ problem ($P_{\lambda}$) has no
positive solution, whereas for $\lambda<\widehat{\lambda}_{1}$ problem
($P_{\lambda}$) has at least one positive solution. In fact, we show
that under an additional monotonicity condition on the quotient
$x\mapsto\frac{f(z,x)}{x}$ on $(0,+\infty)$, this positive
solution is unique. In the {\it superlinear} case (that is, when
$f(z,\cdot)$ is superlinear near $+\infty$) the situation changes and
uniqueness of the solution fails. In fact, we show that the equation
exhibits a kind of bifurcation phenomenon. Namely, for
$\lambda\geq\widehat{\lambda}_{1}$ problem ($P_{\lambda}$) has no
positive solution, whereas for $\lambda<\widehat{\lambda}_{1}$ it has
at least two positive solutions. For both cases, we show that the
problem has a minimal (that is, smallest) positive solution
$\bar{u}_{\lambda}$ and we determine the monotonicity and continuity
properties of the map $\lambda\mapsto\bar{u}_{\lambda}$.

Our approach is variational, based on the critical point theory,
together with suitable truncation and perturbation techniques. In
the next section, for the convenience of the reader, we recall the
main mathematical tools which will be used in the sequel.

\section{Mathematical background}

Let $X$ be a Banach space and $X^{*}$ be its topological dual. By
$\langle\cdot,\cdot\rangle$ we denote the duality brackets for the
pair $(X^{*},X)$. Given $\varphi\in C^{1}(X,\RR)$, we say that
$\varphi$ satisfies the {\it Cerami condition} (the {\it
C-condition} for short), if the following is true:

{\center Every sequence $\{u_{n}\}_{n\geq1}\subseteq X$ such that $\{\varphi(u_{n})\}_{n\geq1}\subseteq\RR$ is bounded and\\ \vspace{0.2cm}
$(1+\|u_{n}\|)\varphi'(u_{n})\rightarrow0$ in $X^{*}$ as $n\rightarrow\infty$,\\ \vspace{0.2cm}
\hspace{1.9cm} admits a strongly convergent subsequence.}\vspace{0.2cm}

This compactness-type condition on $\varphi$ replaces the local compactness of the ambient space $X$ (in most applications $X$ is infinite dimensional and so it is not locally compact). It leads to a deformation theorem, from which one can derive the minimax theory of the critical values of $\varphi$. A central result of this theory, is the so-called {\it mountain pass theorem}, which we state here in a slightly more general version (see, for example, Gasinski \& Papageorgiou \cite{6}).

\begin{theorem}\label{Theorem 1}
Assume that $\varphi\in C^{1}(X,\RR)$ satisfies the C-condition, $u_{0}, u_{1}\in X$, $\|u_{1}-u_{0}\|>\rho$,
$$\max\{\varphi(u_{0}), \varphi(u_{1})\}<\inf[\varphi(u):\, \|u-u_0\|=\rho]=m_{\rho}$$
and $c=\inf_{\gamma\in\Gamma}\limits\max_{0\leq t\leq1}\limits\varphi(\gamma(t))$, where $\Gamma=\left\{\gamma\in C([0,1], X): \gamma(0)=u_{0}, \gamma(1)=u_{1}\right\}$. Then $c\geq m_{\rho}$ and $c$ is a critical value of $\varphi$.
\end{theorem}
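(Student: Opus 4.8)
The plan is to prove the two assertions in turn: first the inequality $c \ge m_\rho$, which is purely topological, and then the substantive claim that $c$ is a critical value, for which the C-condition enters decisively through a deformation argument. First I would establish $c \ge m_\rho$ by a crossing argument. Fix any $\gamma \in \Gamma$. Since $\gamma(0) = u_0$ gives $\|\gamma(0) - u_0\| = 0 < \rho$ while $\gamma(1) = u_1$ gives $\|\gamma(1) - u_0\| = \|u_1 - u_0\| > \rho$, the continuous real function $t \mapsto \|\gamma(t) - u_0\|$ attains the intermediate value $\rho$ at some $t^* \in (0,1)$. Hence $\gamma(t^*)$ lies on the sphere $\{u : \|u - u_0\| = \rho\}$, so $\max_{0 \le t \le 1} \varphi(\gamma(t)) \ge \varphi(\gamma(t^*)) \ge m_\rho$. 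Taking the infimum over $\gamma \in \Gamma$ yields $c \ge m_\rho$.

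For the second part I would argue by contradiction using the deformation theorem that the C-condition supplies (as noted in the text). Suppose $c$ is a regular value, that is, $K_c := \{u \in X : \varphi'(u) = 0,\ \varphi(u) = c\} = \emptyset$. Because $\max\{\varphi(u_0), \varphi(u_1)\} < m_\rho \le c$, I can fix $\epsilon_0 > 0$ so small that $\max\{\varphi(u_0), \varphi(u_1)\} < c - \epsilon_0$; this pins the two endpoints strictly below the critical band on which the deformation will act. The deformation theorem then produces, for some $0 < \epsilon < \epsilon_0$, a continuous map $h : [0,1] \times X \to X$ with $h(0, \cdot) = \mathrm{id}$, with $h(t, u) = u$ whenever $|\varphi(u) - c| \ge \epsilon_0$ (so the endpoints stay fixed throughout), and with $h(1, \varphi^{c+\epsilon}) \subseteq \varphi^{c-\epsilon}$, where $\varphi^a = \{u : \varphi(u) \le a\}$ denotes the sublevel set.

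Finally I would reach the contradiction. By the definition of $c$ as an infimum there exists $\gamma \in \Gamma$ with $\gamma([0,1]) \subseteq \varphi^{c+\epsilon}$. Set $\tilde\gamma(t) = h(1, \gamma(t))$. Since $h(1, \cdot)$ fixes $u_0$ and $u_1$, the path $\tilde\gamma$ is again admissible, $\tilde\gamma \in \Gamma$, while $\tilde\gamma([0,1]) = h(1, \gamma([0,1])) \subseteq h(1, \varphi^{c+\epsilon}) \subseteq \varphi^{c-\epsilon}$, whence $\max_{0 \le t \le 1} \varphi(\tilde\gamma(t)) \le c - \epsilon < c$. This contradicts $c = \inf_{\gamma \in \Gamma} \max_{0 \le t \le 1} \varphi(\gamma(t))$. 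Therefore $K_c \ne \emptyset$, and $c$ is a critical value.

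The main obstacle, and the only place where the C-condition is genuinely needed, is justifying the deformation theorem with the stated properties, in particular compressing the band $[c - \epsilon, c + \epsilon]$ while leaving points with $|\varphi - c| \ge \epsilon_0$ fixed. This requires constructing a pseudo-gradient vector field for $\varphi$, multiplying it by a cutoff that localizes the induced flow to the band, and invoking the C-condition to guarantee that $(1 + \|u\|)\|\varphi'(u)\|$ stays bounded away from zero on the relevant band. That quantitative lower bound is exactly what prevents a Palais--Smale-type escape along which the flow would fail to lower $\varphi$, and it is the reason the Cerami variant (with the weight $1 + \|u\|$) is used in place of the ordinary Palais--Smale condition.
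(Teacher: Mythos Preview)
Your proof is correct and follows the standard route: the topological crossing argument for $c \ge m_\rho$, then the deformation lemma (available under the C-condition) applied by contradiction to show $c$ is critical. However, there is nothing in the paper to compare it against: the paper does not prove Theorem~\ref{Theorem 1} at all. It is stated as a known result with a reference to Gasinski \& Papageorgiou~\cite{6}, preceded by the remark that the C-condition ``leads to a deformation theorem, from which one can derive the minimax theory of the critical values of $\varphi$.'' So your proposal fills in what the paper deliberately omits, and your outline matches exactly the mechanism the paper alludes to.
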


The analysis of problem ($P_{\lambda}$) will make use of the Sobolev space $H^{1}(\Omega)$. This is a Hilbert space with inner product

$$(u,h)_{H^{1}(\Omega)}=\int_{\Omega}uh\,dz+\int_{\Omega}(Du,Dh)_{\RR^{N}}\,dz\ \mbox{for all}\ u,h\in H^{1}(\Omega).$$
The norm corresponding to this inner product, is given by
$$\|u\|=\left[\|u\|_{2}^{2}+\|Du\|_{2}^{2}\right]^{1/2}\ \mbox{for all}\ u\in H^{1}(\Omega).$$

In addition, we will also use the Banach spaces $C^{1}(\overline{\Omega})$ and $L^{\tau}(\partial\Omega)$, $1\leq\tau\leq\infty$.

The space $C^{1}(\overline\Omega)$ is an ordered Banach space with order (positive) cone
$$C_{+}=\{u\in C^{1}(\overline\Omega):\, u(z)\geq0\ \mbox{for all}\ z\in\overline\Omega\}.$$

This cone has a nonempty interior, given by
$$\textrm{int}\,C_{+}=\{u\in C_{+}:\, u(z)>0\ \mbox{for all}\ z\in\overline\Omega\}.$$

On $\partial\Omega$ we consider the $(N-1)$-dimensional Hausdorff (surface) measure $\sigma(\cdot)$. Using this measure we can define the Lebesgue spaces $L^{\tau}(\partial\Omega)$ $(1\leq\tau\leq\infty)$ in the usual way. The theory of Sobolev spaces says that there exists a unique continuous linear map $\gamma_{0}: H^{1}(\Omega)\rightarrow L^{2}(\partial\Omega)$, known as the {\it trace map}, such that
$$\gamma_{0}(u)=u|_{\partial\Omega}\ \mbox{for all}\ u\in H^{1}(\Omega)\cap C(\overline\Omega).$$

 Therefore we can interpret $\gamma_{0}(u)$ as representing the {\it boundary values} of $u\in H^{1}(\Omega)$. From the general theory of Sobolev spaces, we know that
$$\textrm{im}\,\gamma_{0}=H^{\frac{1}{2},2}(\partial\Omega) \ \ \mbox{and}\ \ \textrm{ker}\,\gamma_{0}=H^{1}_{0}(\Omega).$$

In addition, we know that $\gamma_{0}(\cdot)$ is compact from $H^{1}(\Omega)$ into $L^{\tau}(\partial\Omega)$ with $\tau\in\left[1,\frac{2(N-1)}{N-2}\right)$ if $N\geq3$ and $\tau\in[1,+\infty)$ if $N=1,2$.

In what follows, for the sake of notational simplicity, we shall drop the use of the map $\gamma_{0}$. All restrictions of functions $u\in H^{1}(\Omega)$ on the boundary $\partial\Omega$, are understood in the sense of traces.

Suppose that $f_{0}:\Omega\times\RR\rightarrow\RR$ is a Carath\'{e}odory function (that is, for all $x\in\RR$, $z\mapsto f(z,x)$ is measurable and for a.a. $z\in\Omega$, $x\mapsto f(z,x)$ is continuous) which has subcritical growth in the $x\in\RR$ variable. Hence
$$|f_{0}(z,x)|\leq a_{0}(z)(1+|x|^{r-1})\ \mbox{for a.a.}\ z\in\Omega,\ \mbox{all}\ x\in\RR,$$
with $a_{0}\in L^{\infty}(\Omega)_{+}$, $2\leq r<2^{*}=\left\{ \begin{array}{ll}
   \frac{2N}{N-2}\ \ \mbox{if}\ N\geq3\\
   +\infty\ \ \mbox{if}\ N=1,2 \end{array}\right.$ (the critical Sobolev exponent). We set $F_{0}(z,x)=\int_{0}^{x}f_{0}(z,s)\,ds$ and consider the $C^{1}$-functional $\varphi_{0}:H^{1}(\Omega)\rightarrow\RR$ defined by
$$\varphi_{0}(u)=\frac{1}{2}\gamma(u)-\int_{\Omega}F_{0}(z,u(z))\, dz\ \ \mbox{for all}\ u\in H^{1}(\Omega),$$
where $\gamma:H^{1}(\Omega)\rightarrow\RR$ is the $C^{1}$-functional defined by
$$\gamma(u)=\|Du\|_{2}^{2}+\int_{\Omega}\xi(z)u^{2}\, dz+\int_{\partial\Omega}\beta(z)u^{2}\, d\sigma\ \ \mbox{for all}\ u\in H^{1}(\Omega).$$

The next result relates local minimizers of $\varphi_{0}$ in $C^{1}(\overline\Omega)$ and in $H^{1}(\Omega)$, respectively. It is an outgrowth of the regularity theory for such problems (see Wang \cite{14}) and a more general version of it (with proof) can be found in Papageorgiou \& R\u{a}dulescu \cite{11}. We mention that the first result of this kind for the space $H^{1}_{0}(\Omega)$ (Dirichlet problems) and $\xi\equiv0$, was proved by Brezis \& Nirenberg \cite{3}. Our conditions on the potential $\xi(\cdot)$ and on the boundary coefficient $\beta(\cdot)$ are:\\ [0.3cm]
$H(\xi)$: $\xi\in L^{s}(\Omega)$ with $s>N$ if $N\geq3$ and $s>1$ if $N=1,2$\\ [0.2cm]
$H(\beta)$: $\beta\in W^{1,\infty}(\partial\Omega)$ with $\beta(z)\geq0$ for all $z\in\partial\Omega$

\vspace*{4pt}\noindent\textbf{Remark.} If $\beta\equiv0$, then we recover the Neumann problem. Hence our present paper includes the Neumann problems as a special case.

\begin{proposition}\label{Proposition 2}
 Assume that hypotheses $H(\xi)$, $H(\beta)$ hold and $u_{0}\in H^{1}(\Omega)$ is a local $C^{1}(\overline\Omega)$-minimizer, that is, there exists $\rho_{1}>0$ such that  $\varphi_{0}(u_{0})\leq\varphi_{0}(u_{0}+h)$ for all $h\in C^{1}(\overline\Omega)$ with $\|h\|_{C^{1}(\overline\Omega)}\leq\rho_{1}$. Then $u_{0}\in C^{1,\alpha}(\overline\Omega)$ for some $\alpha\in(0,1)$ and $u_0$ is a local $H^{1}(\Omega)$-minimizer of $\varphi_{0}$, that is, there exists $\rho_{2}>0$ such that  $\varphi_{0}(u_{0})\leq\varphi_{0}(u_{0}+h)$ for all $h\in H^{1}(\Omega)$ with $\|h\|\leq{\rho_{2}}$.
\end{proposition}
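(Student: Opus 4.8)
The plan is to follow the classical strategy of Brezis \& Nirenberg \cite{3}, adapted to the Robin setting with the indefinite potential $\xi$ and boundary coefficient $\beta$, as in \cite{11}. First I would note that since $u_0$ is a local $C^1(\overline\Omega)$-minimizer, $C^1(\overline\Omega)$ is dense in $H^1(\Omega)$, and $\varphi_0\in C^1(H^1(\Omega),\RR)$, the scalar function $t\mapsto\varphi_0(u_0+th)$ has a local minimum at $t=0$ for every $h\in C^1(\overline\Omega)$ (because $\|th\|_{C^1(\overline\Omega)}\leq\rho_1$ for small $|t|$); hence $\langle\varphi_0'(u_0),h\rangle=0$ first for all $h\in C^1(\overline\Omega)$ and then, by density and the continuity of $\varphi_0'$, for all $h\in H^1(\Omega)$. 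Thus $u_0$ is a weak solution of $-\Delta u_0+\xi(z)u_0=f_0(z,u_0)$ with the Robin condition, and the nonlinear regularity theory of Wang \cite{14} (see also \cite{11}) yields $u_0\in C^{1,\alpha}(\overline\Omega)$ for some $\alpha\in(0,1)$. In particular $-\Delta u_0=f_0(z,u_0)-\xi(z)u_0\in L^q(\Omega)$ for a suitable $q$, a fact I reuse below.

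For the main assertion I would argue by contradiction: suppose $u_0$ is not a local $H^1(\Omega)$-minimizer, so that for every $\epsilon>0$ the number $m_\epsilon=\inf\{\varphi_0(u):\|u-u_0\|\leq\epsilon\}$ satisfies $m_\epsilon<\varphi_0(u_0)$. Since $\varphi_0$ is sequentially weakly lower semicontinuous on $H^1(\Omega)$ (the term $\frac12\|Du\|_2^2$ is weakly lsc, while $u\mapsto\int_\Omega\xi u^2\,dz$, $u\mapsto\int_{\partial\Omega}\beta u^2\,d\sigma$ and $u\mapsto\int_\Omega F_0(z,u)\,dz$ are weakly continuous thanks to the compact embeddings of $H^1(\Omega)$ into the relevant $L^\tau(\Omega)$ and the compactness of $\gamma_0$ into $L^\tau(\partial\Omega)$), and the closed ball $\overline{B}_\epsilon(u_0)$ is weakly compact in the Hilbert space $H^1(\Omega)$, the infimum is attained at some $u_\epsilon$ with $\|u_\epsilon-u_0\|\leq\epsilon$ and $\varphi_0(u_\epsilon)=m_\epsilon<\varphi_0(u_0)$. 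By the Lagrange multiplier rule for the constraint $\|u-u_0\|^2\leq\epsilon^2$, there is $\mu_\epsilon\geq0$ with $\varphi_0'(u_\epsilon)=-\mu_\epsilon\,J(u_\epsilon-u_0)$ in $H^1(\Omega)^*$, where $J$ denotes the Riesz map associated with the $H^1$ inner product.

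The crux is to show $u_\epsilon\to u_0$ in $C^1(\overline\Omega)$ as $\epsilon\downarrow0$. Rewriting the multiplier identity, $u_\epsilon$ weakly solves
\[
-(1+\mu_\epsilon)\Delta u_\epsilon+(\xi+\mu_\epsilon)u_\epsilon=f_0(z,u_\epsilon)+\mu_\epsilon u_0-\mu_\epsilon\Delta u_0 ,
\]
and substituting $-\Delta u_0=f_0(z,u_0)-\xi u_0$ and dividing by $1+\mu_\epsilon$ recasts this as $-\Delta u_\epsilon+\widehat\xi_\epsilon u_\epsilon=g_\epsilon$, where the multiplier coefficients $\tfrac{1}{1+\mu_\epsilon},\tfrac{\mu_\epsilon}{1+\mu_\epsilon}\in[0,1]$ are uniformly bounded and $\widehat\xi_\epsilon=\tfrac{\xi-1}{1+\mu_\epsilon}+1$ stays bounded in $L^s(\Omega)$. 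Since $\{u_\epsilon\}$ is bounded in $H^1(\Omega)$ (it lies in $\overline{B}_\epsilon(u_0)$), the subcritical growth of $f_0$ and a Moser-type iteration produce a uniform $L^\infty(\Omega)$ bound, and then the Robin regularity theory of \cite{14,11} upgrades this to a bound on $\|u_\epsilon\|_{C^{1,\alpha}(\overline\Omega)}$ that is independent of $\epsilon$. The compact embedding $C^{1,\alpha}(\overline\Omega)\hookrightarrow C^1(\overline\Omega)$ forces, along a subsequence, $u_\epsilon\to\widetilde u$ in $C^1(\overline\Omega)$; as $\|u_\epsilon-u_0\|\leq\epsilon\to0$ already gives $u_\epsilon\to u_0$ in $H^1(\Omega)$, we identify $\widetilde u=u_0$, whence $u_\epsilon\to u_0$ in $C^1(\overline\Omega)$. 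Consequently, for $\epsilon$ small, $\|u_\epsilon-u_0\|_{C^1(\overline\Omega)}\leq\rho_1$, and the local $C^1$-minimality yields $\varphi_0(u_0)\leq\varphi_0(u_\epsilon)=m_\epsilon<\varphi_0(u_0)$, a contradiction. The main obstacle is precisely this uniform $C^{1,\alpha}$ estimate: one must keep the Lagrange-multiplier contribution under control and verify that the regularity bounds are genuinely $\epsilon$-independent. The decisive points are the substitution of the equation for $u_0$ (which removes the distributional term $\Delta u_0$) and the confinement of the multiplier coefficients to $[0,1]$, so that neither the potential $\widehat\xi_\epsilon$ nor the right-hand side $g_\epsilon$ can blow up as $\mu_\epsilon\to\infty$.
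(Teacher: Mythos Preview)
The paper does not actually prove this proposition: it states the result and refers the reader to Papageorgiou \& R\u{a}dulescu \cite{11} for a proof in a more general setting, noting that the argument is an outgrowth of the regularity theory of Wang \cite{14} and that the prototype for Dirichlet problems with $\xi\equiv0$ is Brezis \& Nirenberg \cite{3}. So there is no in-paper proof to compare against.

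That said, your proposal is exactly the Brezis--Nirenberg scheme adapted to the Robin setting with potential, which is precisely the strategy carried out in \cite{11}. The steps you outline are the correct ones: criticality of $u_0$, $C^{1,\alpha}$ regularity via \cite{14,11}, constrained minimization on $\overline{B}_\epsilon(u_0)$, the Lagrange multiplier equation, and the uniform $C^{1,\alpha}$ bound leading to $C^1$ convergence and a contradiction. Your observation that the coefficients $\tfrac{1}{1+\mu_\epsilon}$ and $\tfrac{\mu_\epsilon}{1+\mu_\epsilon}$ stay in $[0,1]$, together with the substitution of the equation satisfied by $u_0$, is indeed the crux of the uniform estimate. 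One point you leave implicit is the Robin boundary condition for $u_\epsilon$: after integrating by parts, the multiplier equation also produces a boundary term $\mu_\epsilon\,\partial u_0/\partial n$, which you should rewrite using $\partial u_0/\partial n=-\beta u_0$ so that $u_\epsilon$ satisfies a Robin-type condition with coefficients bounded uniformly in $\epsilon$; this is needed for the regularity results of \cite{14,11} to apply uniformly. With that addition, the sketch is sound.
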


Next we recall some basic facts concerning the spectrum of the differential operator $u\mapsto -\Delta u+\xi(z)u$ with Robin boundary condition. So we consider the following linear eigenvalue problem
\begin{eqnarray}\label{1}
\left\{\begin{array}{ll}
-\Delta u(z)+\xi(z)u(z)=\widehat{\lambda} u(z)\ \ \mbox{in}\ \Omega,\\ [0.3cm]
\displaystyle\frac{\partial u}{\partial n}+\beta(z)u=0\ \ \mbox{on}\ \partial\Omega.
\end{array}\right\}
\end{eqnarray}

By an eigenvalue we mean a $\widehat{\lambda}\in\RR$ for which problem \eqref{1} has a nontrivial solution $\hat{u}\in H^{1}(\Omega)$, called an eigenfunction corresponding to $\widehat\lambda$. Using the spectral theorem for compact self-adjoint operators on a Hilbert space, we know that the spectrum of \eqref{1} consists of a sequence $\{\widehat\lambda_{k}\}_{k\geq1}$ of distinct eigenvalues such that  $\widehat\lambda_{k}\rightarrow+\infty$ (see \cite{4, 11}). The first eigenvalue $\widehat\lambda_{1}\in\RR$ has the following properties:
\begin{itemize}
  \item $\widehat\lambda_{1}$ is simple with eigenfunctions of constant sign;
  \item we have \begin{eqnarray}\label{2}\widehat\lambda_{1}=\inf\left[\frac{\gamma(u)}{\|u\|_{2}^{2}}:\, u\in H^{1}(\Omega), u\neq0\right].\end{eqnarray}
\end{itemize}

The infimum in \eqref{2} is realized on the corresponding one dimensional eigenspace. Let $\hat{u}_{1}$ be the $L^{2}$-normalized (that is, $\|\hat{u}_{1}\|_2=1$) positive eigenfunction corresponding to $\widehat\lambda_{1}$. If hypotheses $H(\xi)$ and $H(\beta)$ hold, then $\hat{u}_{1}\in C^1(\overline\Omega)$ (see Wang \cite{14}) and by Harnack's inequality (see Gasinski \& Papageorgiou \cite[p. 731]{6}), we have $\hat u_{1}(z)>0$ for all $z\in\Omega$. Moreover, if in addition we assume that $\xi^{+}\in L^{\infty}(\Omega)$, then $\hat u_{1}\in\textrm{int}\, C_{+}$ (via the strong maximum principle, see for example Gasinski \& Papageorgiou \cite[p. 738]{6}).

As a consequence of these properties, we obtain the following simple lemma.

\begin{lemma}\label{Lemma 3}
If hypotheses $H(\xi)$, $ H(\beta)$ hold, $\vartheta\in L^{\infty}(\Omega)$, $\vartheta(z)\leq\widehat{\lambda}_{1}$ for a.a. $z\in\Omega$ and $\vartheta\not\equiv\widehat\lambda_1$, then there exists $\hat{c}>0$ such that
$$J(u)=\gamma(u)-\int_{\Omega}\vartheta(z)u^{2}\, dz\geq\hat c\|u\|^2 \ \ \mbox{for all}\ \ u\in H^{1}(\Omega).$$
\end{lemma}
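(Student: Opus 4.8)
The plan is to argue by contradiction, exploiting the $2$-homogeneity of $J$ together with the compactness furnished by the Sobolev and trace embeddings. First I would record that, since $\vartheta(z)\leq\widehat{\lambda}_{1}$ a.e., the variational characterization \eqref{2} gives
$J(u)=\gamma(u)-\int_{\Omega}\vartheta u^{2}\,dz\geq\gamma(u)-\widehat{\lambda}_{1}\|u\|_{2}^{2}\geq0$ for every $u\in H^{1}(\Omega)$. Hence $J\geq0$, and because $J(tu)=t^{2}J(u)$, the asserted inequality is equivalent to showing that $m:=\inf\{J(u):\|u\|=1\}>0$.

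Suppose instead that $m=0$ and choose a sequence $\{u_{n}\}$ with $\|u_{n}\|=1$ and $J(u_{n})\to0$. This sequence is bounded in $H^{1}(\Omega)$, so along a subsequence $u_{n}\rightharpoonup u$ in $H^{1}(\Omega)$, and by the Rellich--Kondrachov compact embedding and the compactness of the trace map we have $u_{n}\to u$ in $L^{2}(\Omega)$ and in $L^{2}(\partial\Omega)$. The key observation is that all the lower-order terms pass to the limit: using $\xi\in L^{s}(\Omega)$ with the exponent $s$ from $H(\xi)$ together with Hölder's inequality and a compact Sobolev embedding one gets $\int_{\Omega}\xi u_{n}^{2}\,dz\to\int_{\Omega}\xi u^{2}\,dz$; since $\beta\in L^{\infty}(\partial\Omega)$ and $\vartheta\in L^{\infty}(\Omega)$, the boundary integral and the $\vartheta$-integral converge as well. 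Only the Dirichlet term $\|Du_{n}\|_{2}^{2}$ is merely weakly lower semicontinuous, which yields $J(u)\leq\liminf_{n}J(u_{n})=0$, and with $J\geq0$ this forces $J(u)=0$.

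Next I would extract the rigidity from $J(u)=0$. The chain $0=J(u)\geq\gamma(u)-\widehat{\lambda}_{1}\|u\|_{2}^{2}\geq0$ shows both inequalities are equalities, i.e. $\gamma(u)=\widehat{\lambda}_{1}\|u\|_{2}^{2}$ and $\int_{\Omega}(\widehat{\lambda}_{1}-\vartheta)u^{2}\,dz=0$. I must first rule out $u=0$: if $u=0$, then all $L^{2}$ and boundary terms vanish in the limit, so $J(u_{n})\to0$ forces $\|Du_{n}\|_{2}\to0$ while $\|u_{n}\|_{2}\to0$, contradicting $\|u_{n}\|=1$. Thus $u\neq0$, and $\gamma(u)=\widehat{\lambda}_{1}\|u\|_{2}^{2}$ says $u$ realizes the infimum in \eqref{2}. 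By the simplicity of $\widehat{\lambda}_{1}$ and the constant sign of its eigenfunction, with $\hat{u}_{1}(z)>0$ for all $z\in\Omega$, we conclude $u=t\hat{u}_{1}$ for some $t\neq0$, whence $u^{2}>0$ a.e. in $\Omega$. Substituting into $\int_{\Omega}(\widehat{\lambda}_{1}-\vartheta)u^{2}\,dz=0$, whose integrand is nonnegative, forces $\vartheta=\widehat{\lambda}_{1}$ a.e., contradicting $\vartheta\not\equiv\widehat{\lambda}_{1}$. Therefore $m>0$ and we may take $\hat{c}=m$.

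The main obstacle, and essentially the only non-formal step, is guaranteeing that the lower-order terms genuinely pass to the limit: this is exactly where the integrability hypothesis $H(\xi)$ (to control $\int_{\Omega}\xi u_{n}^{2}\,dz$ via Hölder together with a compact embedding) and the compactness of the trace operator (to control the boundary integral) enter. Everything else is the standard direct-method machinery combined with the spectral information about $\widehat{\lambda}_{1}$ recalled above.
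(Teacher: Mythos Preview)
Your argument is correct and essentially identical to the paper's: both prove $J\ge0$ from \eqref{2}, assume by contradiction that the infimum of $J$ on the unit sphere is zero, pass to a weak limit $u$ using compactness of the lower-order terms, rule out $u=0$ via $\|Du_n\|_2\to0$, and then use simplicity of $\widehat\lambda_1$ to write $u=\chi\hat u_1$ with $\chi\neq0$. The only cosmetic difference is in the final contradiction: you isolate $\int_\Omega(\widehat\lambda_1-\vartheta)u^2\,dz=0$ and use $u^2>0$ a.e.\ to force $\vartheta\equiv\widehat\lambda_1$, whereas the paper phrases the same step as the strict inequality $\gamma(u)<\widehat\lambda_1\|u\|_2^2$ violating \eqref{2}.
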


\begin{proof}
From the variational characterization of $\widehat\lambda_{1}$ (see \eqref{2}), we get that $J\geq0$. Suppose that the lemma is not true. Exploiting the $2$-homogeneity of $J(\cdot)$, we can then find $\{u_{n}\}_{n\geq1}\subseteq H^{1}(\Omega)$ such that
\begin{eqnarray}\label{3}
\|u_{n}\|=1\ \mbox{for all}\ n\in\NN\ \mbox{and}\ J(u_{n})\downarrow0\ \mbox{as}\ n\rightarrow\infty.
\end{eqnarray}

We may assume that
\begin{eqnarray}\label{4}
u_{n}\xrightarrow{w}u\ \mbox{in}\ H^{1}(\Omega)\ \mbox{and}\ u_{n}\rightarrow u\ \mbox{in}\ L^{2}(\Omega)\ \mbox{and in}\ L^{2}(\partial\Omega).
\end{eqnarray}

It follows from \eqref{3} and \eqref{4} that
\begin{align}
{}&J(u)\leq0,\nonumber \\
\label{5}\Rightarrow\, & \gamma(u)\leq\int_{\Omega}\vartheta(z)u^{2}\,dz\leq\widehat\lambda_{1}\|u\|_{2}^{2}\\
\Rightarrow\, & \gamma(u)=\widehat\lambda_{1}\|u\|_{2}^{2}\ \ (\mbox{see}\ \eqref{2}),\nonumber\\
\Rightarrow\, & u=\chi\hat{u}_{1}\ \mbox{with}\ \chi\in\RR. \nonumber
\end{align}
If $\chi=0$, then $u=0$ and so
\begin{align*}
{}&\|Du_{n}\|_{2}\rightarrow0\ (\mbox{see}\ \eqref{3}),\\
\Rightarrow\, & u_{n}\rightarrow0\ \mbox{in}\ H^{1}(\Omega)\ (\mbox{see}\ \eqref{4}),
\end{align*}
which contradicts the fact that $\|u_{n}\|=1$ for all $n\in\NN$ (see \eqref{3}).

Hence $\chi\neq0$. Then $u(z)\neq0$ for all $z\in\Omega$ and by \eqref{5} we have
$$\|Du\|_{2}^{2}<\widehat\lambda_{1}\|u\|_{2}^{2},$$
which contradicts \eqref{2}.
\end{proof}

We observe that there exists $\mu>0$ such that
\begin{eqnarray}\label{6}
\gamma(u)+\mu\|u\|_{2}^{2}\geq c_{0}\|u\|^{2}\ \mbox{for some}\ c_{0}>0,\ \mbox{all}\ u\in H^{1}(\Omega)
\end{eqnarray}
(see \cite{4, 13}).

To see \eqref{6}, we argue by contradiction. So, suppose that the inequality is not true. We can find $\{u_n\}_{n\geq 1}\subseteq H^1(\Omega)$ such that
$$\gamma (u_n)+n\,\| u_n\|^2_2<\frac 1n\, \|u_n\|\quad\mbox{for all}\ n\in\NN\,.$$
Set $y_n=\displaystyle\frac{u_n}{\|u_n\|}$, $n\in\NN$. Then $\|y_n\|=1$ for all $n\in\NN$ and so we may assume that
$$y_{n}\xrightarrow{w}y\ \mbox{in}\ H^1(\Omega) \ \mbox{and}\ y_{n}\rightarrow y\ \mbox{in}\ L^2(\Omega)\ \mbox{and in}\ L^2(\partial\Omega).$$
Exploiting the sequential weak lower semicontinuity of $\gamma (\,\cdot\,)$ we see that $y=0$ and $n\|y_n\|^2_2\rightarrow 0$. Therefore $\| y_n\|_2\rightarrow 0$. Finally we can say that
\begin{align}
&0\leq \liminf_{n\rightarrow\infty}\gamma (y_n)\leq\limsup_{n\rightarrow\infty}\gamma (y_n)\leq
\lim_{n\rightarrow\infty}\left[ \frac 1n-n\|y_n\|^2_2\right]=0,\nonumber \\
\nonumber\Rightarrow\, 0 & =\lim_{n\rightarrow\infty}\gamma (y_n)=\lim_{n\rightarrow\infty}\|Dy_n\|^2_2\ (\mbox{recall}\ y=0)\\
\nonumber\Rightarrow\,\,\,\,\, & y_n\rightarrow 0\ \mbox{in $H^1(\Omega)$, a contradiction to the fact that $\|y_n\|=1$}.
\end{align}

We say that a Banach space $X$ has the \textit{Kadec-Klee property} if the following is true:
\begin{eqnarray}\label{7}
u_{n}\xrightarrow{w}u\ \mbox{in}\ X\ \mbox{and}\ \|u_{n}\|\rightarrow\|u\|\Rightarrow u_{n}\rightarrow u\ \mbox{in}\ X.
\end{eqnarray}

As a consequence of the parallelogram law, we see that every Hilbert space has the Kadec-Klee property.

Given $x\in\RR$, we set $x^{\pm}=\max\{\pm x,0\}$. Then for every $u\in H^{1}(\Omega)$ we define
$$u^{\pm}(\cdot)=u(\cdot)^{\pm}.$$
We know that
$$u^{\pm}\in H^{1}(\Omega),\; u=u^{+}-u^{-},\; |u|=u^{+}+u^{-}\ \mbox{for all}\ u\in H^{1}(\Omega).$$

By $|\cdot|_{N}$ we denote the Lebesgue measure on $\RR^{N}$ and given a measurable function $h:\Omega\times\RR\rightarrow\RR$ (for example, a Carath\'{e}odory function), we set
$$N_{h}(u)(\cdot)=h(\cdot, u(\cdot))\ \mbox{for all}\ u\in H^{1}(\Omega)$$
(the Nemytskii or superposition map corresponding to the function $h$).

Given $\varphi\in C^{1}(X,\RR)$ ($X$ a Banach space), we set
$$K_{\varphi}=\{u\in X:\; \varphi'(u)=0\}\ (\mbox{the critical set of $\varphi$}).$$

Finally, by $A\in\mathcal{L}(H^{1}(\Omega), H^{1}(\Omega)^{*})$ we denote the linear operator defined by
$$\langle A(u),h\rangle=\int_{\Omega}(Du,Dh)_{\RR^{N}}\, dz\ \mbox{for all}\ u,h\in H^{1}(\Omega).$$

\section{The sublinear case}\label{sec2}

In this section we examine problem ($P_{\lambda}$) under the hypothesis that the perturbation term $f(z,\cdot)$ is sublinear near $+\infty$. More precisely, our conditions on the nonlinearity $f(z,x)$ are the following: \vspace{0.2cm}\\
$H_{1}$:\ \ $f:\Omega\times\RR\rightarrow\RR$ is a Carath\'{e}odory function such that  for a.a. $z\in\Omega$, $f(z,0)=0$, $f(z,x)>0$ for all $x>0$ and
\begin{enumerate}
\item [(i)] for every $\rho>0$, there exists $a_{\rho}\in L^{\infty}(\Omega)_{+}$ such that  $f(z,x)\leq a_{\rho}(z)$ for a.a. $z\in\Omega$, all $0\leq x\leq\rho$;\\
\item [(ii)] $\lim_{x\rightarrow+\infty}\limits\frac{f(z,x)}{x}=0$ uniformly for a.a. $z\in\Omega$;\\
\item [(iii)] there exist $\delta>0$ and $q\in(1,2)$ such that
$c_{1}x^{q-1}\leq f(z,x)\ \mbox{for a.a.}\ z\in\Omega,\ \mbox{all}\ 0\leq x\leq\delta$.
\end{enumerate}

\vspace*{4pt}\noindent\textbf{Remarks.}
Since we are looking for positive solutions and all the above hypotheses concern the positive semiaxis $\RR_{+}=[0,+\infty)$,  we may assume without any loss of generality that $f(z,x)=0$ for a.a. $z\in\Omega$, all $x\leq0$. Hypothesis $H_{1}$(ii) implies that $f(z,\cdot)$ is sublinear near $+\infty$. Hypothesis $H_{1}$(iii) says that there is a concave term near the origin.

\vspace*{4pt}\noindent\textbf{Examples.} The following functions satisfy hypotheses $H_{1}$. For the sake of simplicity we drop the $z$-dependence
$$f_{1}(x)=x^{q-1}\ \mbox{for all}\ x\geq0,\ \mbox{with}\ 1<q<2,$$
$$
f_{2}(x)=\left\{\begin{array}{ll}
-x^{q-1}\ln x & \mbox{if}\ x\in[0,1]\\ [0.3cm]
x^{s-1}-x^{p-1} & \mbox{if}\ 1<x
\end{array}\right. \mbox{with}\ 1<p<s<2,\; 1<q<2.
$$

We can improve the properties of the positive solutions of ($P_{\lambda}$), provided we strengthen hypothesis $H(\xi)$.\vspace{0.2cm}\\
$H(\xi)'$:\ \ $\xi\in L^{s}(\Omega)$ with $s>N$ if $N\geq3$, $s>1$ if $N=1,2$ and $\xi^{+}\in L^{\infty}(\Omega)$. \vspace{0.2cm}

We introduce the following two sets:
$$\mathcal{L}=\{\lambda\in\RR:\; \mbox{problem}\ (P_{\lambda})\; \mbox{admits a positive solution}\},$$
$$S(\lambda)=\{\mbox{the set of positive solutions for problem}\ (P_{\lambda})\}.$$

We start with a simple but useful observation concerning the solution set $S(\lambda)$. For this result the precise behavior of $f(z,\cdot)$ near $+\infty$ and near $0^{+}$ are irrelevant. We only need nonnegativity of $f(z,x)$ and subcritical growth in $x\in\RR$. Under these very general conditions, the result is also applicable in the superlinear case (see Section \ref{sec4}). The new hypotheses on the perturbation term $f(z,x)$ are the following:
\begin{enumerate}
\item [$\widehat{H}$:] $f:\Omega\times\RR\rightarrow\RR$ is a Carath\'{e}odory function such that  for a.a. $z\in\Omega$, $f(z,0)=0$, $f(z,x)\geq0$ for all $x\geq0$ and
$$f(z,x)\leq a(z)(1+x^{r-1})\ \mbox{for a.a.}\ z\in\Omega,\ \mbox{all}\ x\geq0,$$
with $a\in L^{\infty}(\Omega)_{+}$, $2\leq r<2^{*}$.
\end{enumerate}

\begin{proposition}\label{Proposition 4}
If hypotheses $H(\xi)$ (resp. $H(\xi)'$), $H(\beta)$, $\widehat{H}$ hold, then for all $\lambda\in\RR$, $S(\lambda)\subseteq C_{+}\setminus\{0\}$ (resp. $S(\lambda)\subseteq \textrm{int}\, C_{+}$) (possibly empty).
\end{proposition}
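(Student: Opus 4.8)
The plan is to fix an arbitrary $u\in S(\lambda)$---so that $u\in H^{1}(\Omega)$ is a weak solution of $(P_{\lambda})$ with $u\geq0$ and $u\neq0$---and to bootstrap first its regularity and then, under the stronger hypothesis, its strict positivity.

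First I would rewrite the equation as a Robin problem for the Laplacian, $-\Delta u=g$ in $\Omega$ with $\partial u/\partial n+\beta u=0$ on $\partial\Omega$, where $g(z)=\lambda u(z)+f(z,u(z))-\xi(z)u(z)$. The first task is an $L^{\infty}$-bound: using hypothesis $\widehat{H}$ (the subcritical growth $f(z,x)\le a(z)(1+x^{r-1})$, $r<2^{*}$) together with $\xi\in L^{s}(\Omega)$, $s>N$, one runs the standard Moser iteration / nonlinear regularity scheme (see Wang \cite{14}) to conclude $u\in L^{\infty}(\Omega)$. Once $u$ is bounded, hypothesis $\widehat{H}$ gives $f(\cdot,u(\cdot))\in L^{\infty}(\Omega)$, while $\xi u\in L^{s}(\Omega)$, so $g\in L^{s}(\Omega)$ with $s>N$. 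The $L^{p}$-regularity theory for the Robin problem then yields $u\in W^{2,s}(\Omega)$, and since $s>N$ the Sobolev embedding gives $u\in C^{1,\alpha}(\overline{\Omega})$ with $\alpha=1-N/s$. Because $u\geq0$ and $u\neq0$, this already proves $u\in C_{+}\setminus\{0\}$, which settles the claim under $H(\xi)$.

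For the stronger conclusion I would invoke $H(\xi)'$, which adds $\xi^{+}\in L^{\infty}(\Omega)$. Since $f(z,u(z))\geq0$ and $u\geq0$, the equation gives the pointwise estimate
\[
\Delta u=(\xi(z)-\lambda)u-f(z,u)\le(\xi(z)-\lambda)u\le c\,u\quad\text{in }\Omega,
\]
where $c:=\|\xi^{+}\|_{\infty}+|\lambda|\geq0$. Equivalently, $-\Delta u+c\,u\geq0$ in $\Omega$ with $u\geq0$, $u\neq0$. The strong maximum principle (see Gasinski \& Papageorgiou \cite[p.~738]{6}) then forces $u(z)>0$ for all $z\in\Omega$. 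Finally, suppose $u(z_{0})=0$ for some boundary point $z_{0}\in\partial\Omega$; Hopf's boundary point lemma (applicable since $\partial\Omega$ is $C^{2}$) would give $\partial u/\partial n(z_{0})<0$, whereas the Robin condition yields $\partial u/\partial n(z_{0})=-\beta(z_{0})u(z_{0})=0$, a contradiction. Hence $u(z)>0$ for all $z\in\overline{\Omega}$, i.e. $u\in\operatorname{int}C_{+}$.

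The main obstacle is the $L^{\infty}$-bound in the presence of the indefinite, merely $L^{s}$ (not bounded below) potential $\xi$: the Moser iteration must absorb the term $\xi u$, and this is exactly where the integrability $s>N$ is used; everything downstream (Calder\'on--Zygmund regularity, Sobolev embedding, maximum principle) is routine once boundedness is secured. I would also emphasize that the distinction between the two conclusions is genuine---the passage from $C_{+}\setminus\{0\}$ to $\operatorname{int}C_{+}$ rests on the uniform bound $\xi\le\|\xi^{+}\|_{\infty}$, which is unavailable under $H(\xi)$ alone and is precisely what $H(\xi)'$ supplies.
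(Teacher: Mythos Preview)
Your proposal is correct and follows essentially the same route as the paper: bootstrap to $L^{\infty}$ via Wang's regularity, then Calder\'on--Zygmund estimates give $u\in W^{2,s}(\Omega)\hookrightarrow C^{1,\alpha}(\overline\Omega)$, and under $H(\xi)'$ the inequality $\Delta u\leq(\|\xi^{+}\|_{\infty}+|\lambda|)u$ plus the strong maximum principle yields $u\in\textrm{int}\,C_{+}$. The only difference is that the paper makes the $L^{\infty}$-step concrete by writing $-\Delta u=k_{\lambda}(z)u+\vartheta_{\lambda}(z)$ (splitting on $\{u\leq1\}$ versus $\{u>1\}$) and checking $k_{\lambda}\in L^{N/2}(\Omega)$, $\vartheta_{\lambda}\in L^{s}(\Omega)$ so that Wang's Lemma~5.1 applies directly, whereas you invoke Moser iteration more generically; your explicit Hopf boundary argument is simply an elaboration of what the paper folds into ``the strong maximum principle.''
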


\begin{proof}
Let $u\in S(\lambda)$. Then we have
\begin{eqnarray}\label{8}\left\{\begin{array}{ll}
-\Delta u(z)+\xi(z)u(z)=\lambda u(z)+f(z,u(z))\ \ \mbox{for a.a.}\ z\in\Omega,\vspace{0.2cm}\\
\displaystyle\frac{\partial u}{\partial n}+\beta(z)u=0\ \ \mbox{on}\ \partial\Omega
\end{array}\right\}
\end{eqnarray}
(see Papageorgiou \& R\u{a}dulescu \cite{11}).

We introduce the functions
$$
k_{\lambda}(z)=\left\{
\begin{array}{ll}
0 & \mbox{if}\ 0\leq u(z)\leq1\vspace{0.2cm}\\
\displaystyle\frac{f(z,u(z))}{u(z)}+(\lambda-\xi(z)) & \mbox{if}\ 1<u(z)
\end{array} \right.
$$
and
$$
\vartheta_{\lambda}(z)=\left\{
\begin{array}{ll}
f(z,u(z))+(\lambda-\xi(z))u(z) & \mbox{if}\ 0\leq u(z)\leq1\vspace{0.2cm}\\
0 & \mbox{if}\ 1<u(z).
\end{array} \right.
$$

Evidently, $\vartheta_{\lambda}\in L^{s}(\Omega)$ (see hypotheses $H(\xi)$ and $\widehat{H}$). Also
$$|k_{\lambda}(z)|\leq c_{2}(1+u(z)^{r-2})+(\lambda-\xi(z))\ \mbox{for a.a.}\ z\in\Omega,\ \mbox{some}\ c_{2}>0.$$
Note that if $N\geq3$ (the cases $N=1,2$ are straightforward), then
\begin{align*}
(r-2)\frac{N}{2}&<\left(\frac{2N}{N-2}-2\right)\frac{N}{2}\ (\mbox{since}\ r<2^{*})\\
&=\frac{2N}{N-2}=2^{*}.
\end{align*}
Since $u\in H^{1}(\Omega)$, by the Sobolev embedding theorem, we have
\begin{align*}
& u^{(r-2)\frac{N}{2}}\in L^{1}(\Omega),\\
\Rightarrow\, & k_{\lambda}\in L^{\frac{N}{2}}(\Omega)\ (\mbox{see hypothesis}\ H(\xi)).
\end{align*}
We rewrite \eqref{8} as follows
$$\left\{\begin{array}{ll}
-\Delta u(z)=k_\lambda (z)u(z)+\vartheta_{\lambda}(z)\ \ \mbox{for a.a.}\ z\in\Omega,\vspace{0.2cm}\\
\displaystyle\frac{\partial u}{\partial n}+\beta(z)u=0\ \ \mbox{on}\ \partial\Omega.
\end{array}\right\}
$$
 By Lemma 5.1 of Wang \cite{14} we have that $u\in L^{\infty}(\Omega)$. Using the Calderon-Zygmund estimates (see Lemma 5.2 of Wang \cite{14}), we obtain $u\in W^{2,s}(\Omega)$. Then the Sobolev embedding theorem implies $u\in C^{1,\alpha}(\overline\Omega)$ with $\alpha=1-\frac{N}{s}>0$. Therefore we have $u\in C_{+}\setminus\{0\}$. Suppose that $H(\xi)'$ holds. From \eqref{8} we obtain
\begin{align*}
& \Delta u(z)\leq(\xi(z)-\lambda)u(z)\ \mbox{for a.a.}\ z\in\Omega \ (\mbox{see hypotheses}\ \widehat{H})\vspace{0.2cm}\\
&{}\hspace{1.03cm} \leq(\|\xi^{+}\|_{\infty}+|\lambda|)u(z)\ \mbox{for a.a.}\ z\in\Omega\ (\mbox{see hypothesis}\ H(\xi)')\vspace{0.5cm}\\
\Rightarrow\, & u\in\textrm{int}\, C_{+}\vspace{0.2cm}\\
& {}\ \hspace{1cm}(\mbox{by the strong maximum principle, see \cite[p. 738]{6}}).
\end{align*}

Thus, we have proved that when $H(\xi)'$ holds, then $S(\lambda)\subseteq\textrm{int}\, C_{+}$ for all $\lambda\in\RR$.
\end{proof}

Next, we show that for every $\lambda\geq\widehat{\lambda}_{1}$ problem ($P_{\lambda}$)  has no positive solutions (that is, $S(\lambda)=\emptyset$ for all $\lambda\geq\widehat{\lambda}_{1}$).

\begin{proposition}\label{Proposition 5}
If hypotheses $H(\xi)'$, $H(\beta)$, $H_{1}$ hold and $\lambda\geq\widehat{\lambda}_{1}$, then $S(\lambda)=\emptyset$.
\end{proposition}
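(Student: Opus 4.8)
The plan is to argue by contradiction, exploiting the variational characterization of $\widehat\lambda_1$ together with the strict positivity forced by the sign condition on $f$. Suppose that for some $\lambda\geq\widehat\lambda_1$ there exists $u\in S(\lambda)$. Since hypothesis $H(\xi)'$ is in force, Proposition \ref{Proposition 4} upgrades $u$ to $\textrm{int}\,C_+$, so in particular $u(z)>0$ for all $z\in\overline\Omega$. Recall also that under $H(\xi)'$ the $L^2$-normalized principal eigenfunction satisfies $\hat u_1\in\textrm{int}\,C_+$, so $\hat u_1(z)>0$ for all $z\in\overline\Omega$ as well.

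Next I would pair the two problems against a common symmetric bilinear form. Testing the weak formulation of $(P_\lambda)$ satisfied by $u$ with the test function $\hat u_1$ gives
$$\int_\Omega(Du,D\hat u_1)_{\RR^N}\,dz+\int_\Omega\xi u\hat u_1\,dz+\int_{\partial\Omega}\beta u\hat u_1\,d\sigma=\lambda\int_\Omega u\hat u_1\,dz+\int_\Omega f(z,u)\hat u_1\,dz,$$
while testing the eigenvalue problem \eqref{1} satisfied by $\hat u_1$ with the test function $u$ gives
$$\int_\Omega(D\hat u_1,Du)_{\RR^N}\,dz+\int_\Omega\xi\hat u_1 u\,dz+\int_{\partial\Omega}\beta\hat u_1 u\,d\sigma=\widehat\lambda_1\int_\Omega\hat u_1 u\,dz.$$
Both left-hand sides are the value of the symmetric bilinear form associated with $\gamma$ at the pair $(u,\hat u_1)$, hence they coincide, and subtracting eliminates every gradient, potential, and boundary term at once. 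This step is purely formal once one notes that $u,\hat u_1\in H^1(\Omega)$ are legitimate mutual test functions.

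The subtraction leaves the identity
$$0=(\lambda-\widehat\lambda_1)\int_\Omega u\hat u_1\,dz+\int_\Omega f(z,u)\hat u_1\,dz,$$
from which I would read off the contradiction term by term. Since $u,\hat u_1\in\textrm{int}\,C_+$, we have $\int_\Omega u\hat u_1\,dz>0$, and because $\lambda\geq\widehat\lambda_1$ the first summand is nonnegative. For the second summand, the fact that $u(z)>0$ everywhere together with hypothesis $H_1$ (namely $f(z,x)>0$ for all $x>0$) forces $f(z,u(z))>0$ for a.a. $z\in\Omega$, and multiplying by the positive function $\hat u_1$ makes $\int_\Omega f(z,u)\hat u_1\,dz>0$ strictly. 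Thus the right-hand side is strictly positive, contradicting its equality with $0$. Hence no such $u$ exists and $S(\lambda)=\emptyset$ for every $\lambda\geq\widehat\lambda_1$.

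The argument is essentially mechanical once the ingredients are assembled; the single point requiring care — and the step I expect to be the main (if mild) obstacle — is securing the \emph{strict} positivity of both integrals rather than mere nonnegativity. This rests on combining Proposition \ref{Proposition 4}, which places an arbitrary element of $S(\lambda)$ in $\textrm{int}\,C_+$ precisely because $H(\xi)'$ (with $\xi^+\in L^\infty(\Omega)$) is assumed, with the strict sign condition $f(z,x)>0$ for $x>0$ from $H_1$. Without the interior-cone information the products $u\hat u_1$ and $f(z,u)\hat u_1$ would only be guaranteed nonnegative, and the strict inequality that drives the contradiction would be lost; this is exactly why the statement is phrased under $H(\xi)'$ and $H_1$ rather than the weaker $H(\xi)$ and $\widehat H$.
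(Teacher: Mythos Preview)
Your proof is correct, but it takes a genuinely different route from the paper. The paper proceeds via Picone's identity: for $u\in S(\lambda)\subseteq\textrm{int}\,C_+$ and any $v\in\textrm{int}\,C_+$ one has $0\leq\int_\Omega R(v,u)\,dz$, and after Green's identity and the equation for $u$ this becomes $0<\gamma(v)-\lambda\|v\|_2^2$; taking $v=\hat u_1$ yields $0<\widehat\lambda_1-\lambda\leq0$. Your argument is more elementary: you exploit the \emph{symmetry} of the bilinear form underlying $\gamma$ by testing each of the two equations (for $u$ and for $\hat u_1$) against the other solution, then subtract. This avoids Picone entirely and is the natural thing to do for a \emph{linear} self-adjoint operator. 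The Picone route buys robustness---it extends to nonlinear operators such as the $p$-Laplacian where your symmetric cross-testing trick is unavailable---but in the present semilinear setting your approach is cleaner. One small remark on your closing commentary: the need for $H_1$ (strict positivity $f(z,x)>0$ for $x>0$) rather than $\widehat H$ is indeed essential to force $\int_\Omega f(z,u)\hat u_1\,dz>0$; however, even under the weaker $H(\xi)$ one still has $u\in C_+\setminus\{0\}$ and Harnack gives $u>0$ in the open set $\Omega$, which is all your integrals require---so the role of $H(\xi)'$ here is less decisive than you suggest.
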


\begin{proof}
Let $\lambda\geq\widehat{\lambda}_{1}$ and suppose that $S(\lambda)\neq\emptyset$. Let $u\in S(\lambda)$. Due to Proposition \ref{4}, we know that $u\in\textrm{int}\, C_{+}$. Let $v\in\textrm{int}\, C_{+}$ and consider the function
$$R(v,u)(z)=|Dv(z)|^{2}-\left(Du(z),D\left(\frac{v^{2}}{u}\right)(z)\right)_{\RR^{N}}.$$
From Picone's identity (see, for example, Motreanu, Motreanu \& Papageorgiou \cite[p. 255]{9}), we derive
\begin{align*}
& 0\leq R(v,u)(z)\ \mbox{for a.a.}\ z\in\Omega,\vspace{0.2cm}\\
\Rightarrow\, & 0\leq \int_{\Omega}R(v,u)\,dz\vspace{0.2cm}\\
&{}\hspace{0.2cm}=\|Dv\|_{2}^{2}-\int_{\Omega}\left(Du,D\left(\frac{v^{2}}{u}\right)\right)_{\RR^{N}}\, dz\vspace{0.3cm}\\
&{}\hspace{0.2cm}=\|Dv\|_{2}^{2}-\int_{\Omega}(-\Delta u)\left(\frac{v^2}{u}\right)\, dz+\int_{\partial\Omega}\beta(z)u\,\frac{v^2}{u}\, d\sigma\vspace{0.3cm}\\
&\hspace{0.7cm}(\mbox{using Green's identity, see Gasinski \& Papageorgiou \cite[p. 210]{6}})\vspace{0.3cm}\\
&{}\hspace{0.2cm}=\|Dv\|_{2}^{2}-\int_{\Omega}(\lambda-\xi(z))u\left(\frac{v^2}{u}\right)dz-\int_{\Omega}f(z,u)\frac{v^2}{u}\,dz
+\int_{\partial\Omega}\beta(z)v^2\,d\sigma\vspace{0.3cm}\\
&{}\hspace{0.2cm}<\gamma(v)-\lambda\|v\|_2^2\ \ \left(\mbox{since}\ f(z,u)\frac{v^{2}}{u}>0\ \mbox{a.a.}\ z\in\Omega\right).
\end{align*}
Let $v=\hat{u}_{1}\in\textrm{int}\, C_{+}$. Then $0<\gamma(\hat{u}_{1})-\lambda=\widehat\lambda_1-\lambda\leq0$ (recall $\|\hat u_1\|_2=1$), a contradiction. Therefore for all $\lambda\geq\widehat\lambda_1$, we have $S(\lambda)=\emptyset$.
\end{proof}

Next, we show that for $\lambda<\widehat\lambda_1$ there exist positive solutions.

\begin{proposition}\label{Proposition 6}
If hypotheses $H(\xi)'$, $H(\beta)$, $H_{1}$ hold and $\lambda<\widehat\lambda_1$, then $S(\lambda)\neq\emptyset$.
\end{proposition}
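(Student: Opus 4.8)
The plan is to apply the direct method of the calculus of variations to the energy functional $\varphi_\lambda\colon H^1(\Omega)\to\RR$,
$$\varphi_\lambda(u)=\frac{1}{2}\gamma(u)-\frac{\lambda}{2}\|u\|_2^2-\int_\Omega F(z,u)\,dz,$$
where $F(z,x)=\int_0^x f(z,s)\,ds$ and $f(z,\cdot)$ is understood to vanish on $(-\infty,0]$ (as arranged in the Remark). Since $H_1$ implies the subcritical growth of $\widehat H$, this functional is $C^1$ and its critical points are exactly the weak solutions of $(P_\lambda)$. First I would establish coercivity. The sublinearity $H_1\mathrm{(ii)}$ gives, for each $\varepsilon>0$, a constant $C_\varepsilon>0$ with $F(z,x)\leq\frac{\varepsilon}{2}x^2+C_\varepsilon$ for a.a.\ $z\in\Omega$ and all $x\geq0$ (integrate the bound $f(z,x)\leq\varepsilon x$ valid for large $x$, controlling small $x$ via $H_1\mathrm{(i)}$). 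Choosing $\varepsilon\in(0,\widehat\lambda_1-\lambda)$ so that $\lambda+\varepsilon<\widehat\lambda_1$ and invoking Lemma~\ref{Lemma 3} with $\vartheta\equiv\lambda+\varepsilon$ yields
$$\varphi_\lambda(u)\geq\frac{1}{2}\left[\gamma(u)-(\lambda+\varepsilon)\|u\|_2^2\right]-C_\varepsilon|\Omega|_N\geq\frac{\hat c}{2}\|u\|^2-C_\varepsilon|\Omega|_N,$$
so $\varphi_\lambda$ is coercive. Sequential weak lower semicontinuity is routine: $\gamma$ is weakly l.s.c.\ (the gradient term is, and the $\xi$- and $\beta$-terms are weakly continuous by the compactness of $H^1(\Omega)\hookrightarrow L^2(\Omega)$ and of the trace map), while $u\mapsto\frac{\lambda}{2}\|u\|_2^2+\int_\Omega F(z,u)\,dz$ is weakly continuous by the compact embedding $H^1(\Omega)\hookrightarrow L^r(\Omega)$.

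By the Weierstrass--Tonelli theorem, $\varphi_\lambda$ attains a global minimum at some $u_0\in H^1(\Omega)$, which is therefore a weak solution. The crucial point is to rule out $u_0=0$, and here I would use the concave lower bound $H_1\mathrm{(iii)}$. Testing along the ray $t\hat u_1$ with $\hat u_1\in\textrm{int}\,C_+$ and $t>0$ small enough that $t\hat u_1(z)\leq\delta$ on $\overline\Omega$, and using $\gamma(\hat u_1)=\widehat\lambda_1$ (since $\|\hat u_1\|_2=1$) together with $F(z,x)\geq\frac{c_1}{q}x^q$ for $0\leq x\leq\delta$, I obtain
$$\varphi_\lambda(t\hat u_1)\leq\frac{t^2}{2}(\widehat\lambda_1-\lambda)-\frac{c_1 t^q}{q}\int_\Omega\hat u_1^q\,dz.$$
Because $1<q<2$, the term $-t^q$ dominates $t^2$ as $t\to0^+$, so $\varphi_\lambda(t\hat u_1)<0$ for small $t$; hence $\inf\varphi_\lambda<0=\varphi_\lambda(0)$ and $u_0\neq0$.

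It remains to show $u_0\geq0$. Testing $\varphi_\lambda'(u_0)=0$ with $-u_0^-\in H^1(\Omega)$, the perturbation term drops out since $f(z,\cdot)\equiv0$ on $(-\infty,0]$, leaving $\gamma(u_0^-)=\lambda\|u_0^-\|_2^2$; as $\lambda<\widehat\lambda_1$, Lemma~\ref{Lemma 3} with $\vartheta\equiv\lambda$ forces $\hat c\|u_0^-\|^2\leq0$, whence $u_0^-=0$ and $u_0\geq0$. Thus $u_0$ is a nontrivial nonnegative weak solution, and Proposition~\ref{Proposition 4} (under $H(\xi)'$) upgrades it to $u_0\in\textrm{int}\,C_+$, so $u_0\in S(\lambda)$ and $S(\lambda)\neq\emptyset$. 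I expect the main obstacle to be the coercivity step: it is precisely there that the hypothesis $\lambda<\widehat\lambda_1$ enters decisively through Lemma~\ref{Lemma 3}, and the estimate fails exactly when $\lambda\geq\widehat\lambda_1$ (consistent with Proposition~\ref{Proposition 5}). The concavity-driven nontriviality is the other essential ingredient, but it is more mechanical once the exponent inequality $q<2$ is in hand.
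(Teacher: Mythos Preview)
Your proof is correct and follows essentially the same route as the paper: direct minimization of the energy, coercivity via the sublinear bound $F(z,x)\le\frac{\varepsilon}{2}x^2+C_\varepsilon$ combined with Lemma~\ref{Lemma 3}, nontriviality from the concave estimate $H_1$(iii) along $t\hat u_1$, and positivity by testing against $-u_0^-$. The only cosmetic difference is that the paper builds a $\mu$-shift into the functional (adding $\frac{\mu}{2}\|u\|_2^2$ and absorbing $\mu x$ into the truncated reaction) so that the positivity step invokes \eqref{6} rather than Lemma~\ref{Lemma 3}; your direct use of Lemma~\ref{Lemma 3} with $\vartheta\equiv\lambda$ works equally well here since $\lambda<\widehat\lambda_1$.
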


\begin{proof}
Let $\mu>0$ be as in \eqref{6} and consider the Carath\'{e}odory function $g_{\lambda}:\Omega\times\RR\rightarrow\RR$ defined by
$$
g_{\lambda}(z,x)=\left\{
\begin{array}{ll}
0 & \mbox{if}\ x\leq0 \\
(\lambda+\mu)x+f(z,x) & \mbox{if}\ 0<x.
\end{array} \right.
$$
We set $G_{\lambda}(z,x)=\int_{0}^{x}g_{\lambda}(z,s)\,ds$ and consider the $C^{1}$-functional $\varphi_{\lambda}:H^{1}(\Omega)\rightarrow\RR$ defined by
$$\varphi_{\lambda}(u)=\frac{1}{2}\gamma(u)+\frac{\mu}{2}\|u\|_{2}^{2}-\int_{\Omega}G_{\lambda}(z,u)dz\ \mbox{for all}\ u\in H^1(\Omega).$$
Hypotheses $H_{1}$(i),(ii) imply that given $\varepsilon>0$, we can find $c_{3}=c_{3}(\varepsilon)>0$ such that
\begin{eqnarray}\label{9}
F(z,x)\leq\frac{\varepsilon}{2}x^2+c_{3}\ \mbox{for a.a.}\ z\in\Omega,\ \mbox{all}\ x\geq0.
\end{eqnarray}
Using \eqref{9}, we obtain
\begin{align}
\varphi_{\lambda}(u)&\geq\frac{1}{2}\gamma(u)-\frac{\lambda+\varepsilon}{2}\|u^{+}\|_{2}^{2}-c_{3}|\Omega|_{N}\nonumber\\\vspace{0.3cm}
\label{10}&\geq\frac{1}{2}\gamma(u)-\frac{\lambda+\varepsilon}{2}\|u\|_2^2-c_3|\Omega|_N.
\end{align}
Choosing $\varepsilon\in(0,\widehat\lambda_{1}-\lambda)$ (recall $\lambda<\widehat\lambda_{1}$) and using Lemma \ref{Lemma 3}, from \eqref{10} we obtain
\begin{align*}
&\varphi_\lambda(u)\geq\frac{\hat{c}}{2}\|u\|^{2}-c_{3}|\Omega|_N,\\ \vspace{0.2cm}
\Rightarrow\, & \varphi_\lambda\ \mbox{is coercive}.
\end{align*}

Also, invoking the Sobolev embedding theorem and the compactness of the trace map, we see that $\varphi_\lambda$ is sequentially weakly lower semicontinuous. Therefore by the Weierstrass theorem, we can find $u_{\lambda}\in H^{1}(\Omega)$ such that
\begin{eqnarray}\label{11}
\varphi_\lambda(u_\lambda)=\inf\left[\varphi_\lambda(u):\; u\in H^1(\Omega)\right].
\end{eqnarray}
Let $t\in(0,1)$ be so small that  $t\hat{u}_1(z)\in(0,\delta]$ for all $z\in\overline\Omega$ (recall that $\hat{u}_1\in\textrm{int}\, C_{+}$ and note that $\delta>0$ is as in hypothesis $H_1$(iii)). Then
\begin{align}\label{12}
\varphi_{\lambda}(t\hat{u}_1)&\leq\frac{t^2}{2}\gamma(\hat{u}_1)-\frac{t^2}{2}\lambda-\frac{t^q}{q}\|\hat{u}_1\|_q^q\nonumber\\ \vspace{0.3cm}
&\hspace{0.3cm}\ (\mbox{see hypothesis}\ H_1\mbox{(iii) and recall}\ \|\hat{u}_1\|_2=1)\nonumber\\ \vspace{0.3cm}
&=\frac{t^2}{2}(\widehat\lambda_1-\lambda)-\frac{t^q}{q}\|\hat{u}_1\|_q^q.
\end{align}
Since $q<2$, choosing $t\in(0,1)$ even smaller if necessary, from \eqref{12} we obtain
\begin{align*}
&\varphi_\lambda(t\hat{u}_1)<0,\\ \vspace{0.3cm}
\Rightarrow\, &\varphi_{\lambda}(u_\lambda)<0=\varphi_\lambda(0)\ (\mbox{see \eqref{11}), hence}\ u_\lambda\neq0.
\end{align*}
By \eqref{11} we have
$$\varphi'_\lambda(u_\lambda)=0,$$
\begin{eqnarray}\label{13}
\Rightarrow\, \langle A(u_\lambda),h\rangle+\int_{\Omega}\xi(z)u_\lambda h\,dz+\int_{\partial\Omega}\beta(z)u_\lambda h\,d\sigma+\mu\int_\Omega u_\lambda h\,dz\\=\int_{\Omega}g_\lambda(z,u_\lambda)h\,dz
\mbox{for all}\ h\in H^{1}(\Omega).\nonumber
\end{eqnarray}
In \eqref{13} we choose $h=-u_{\lambda}^-\in H^{1}(\Omega)$. Then
\begin{align*}
&\gamma(u_\lambda^-)+\mu\|u_\lambda^-\|_2^2=0,\\ \vspace{0.3cm}
\Rightarrow\, & c_0\|u_\lambda^-\|^2\leq0\ (\mbox{see \eqref{6}}),\\ \vspace{0.3cm}
\Rightarrow\, & u_\lambda\geq0,\ u_\lambda\neq0.
\end{align*}
Thus, equation \eqref{13} becomes
\begin{align*}
&\langle A(u_\lambda),h\rangle+\int_{\Omega}\xi(z)u_\lambda h\,dz+\int_{\partial\Omega}\beta(z)u_\lambda h\,d\sigma=\int_{\Omega}(\lambda u_\lambda+f(z,u_\lambda))h\,dz\\ \vspace{0.3cm}
&{}\hspace{8.5cm}\mbox{for all}\ h\in H^1(\Omega),\\ \vspace{0.3cm}
\Rightarrow&-\Delta u_\lambda(z)+\xi(z)u_\lambda(z)=\lambda u_\lambda(z)+f(z,u_\lambda(z))\ \mbox{for a.a.}\ z\in\Omega,\\ \vspace{0.3cm}
&\frac{\partial u_\lambda}{\partial n}+\beta(z)u_{\lambda}=0\ \mbox{on}\ \partial\Omega\\ \vspace{0.3cm}
&{}\hspace{3cm}(\mbox{see Papageorgiou \& R\u{a}dulescu \cite{11}}),\\ \vspace{0.3cm}
\Rightarrow\, &u_\lambda\in S(\lambda)\subseteq\textrm{int}\, C_+\ (\mbox{see Proposition \ref{Proposition 4}) and so}\ S(\lambda)\neq\emptyset\ \mbox{for}\ \lambda<\widehat\lambda_1.
\end{align*}
\end{proof}

In fact, we can show that for every $\lambda<\widehat\lambda_1$ problem ($P_\lambda$) has a smallest positive solution. To this end note that given $\tau\in\left(\frac{2N}{N-1},2^{*}\right)$, because of hypotheses $H_{1}$(i),(iii), we can find $c_4(\lambda)>0$ with $\lambda\mapsto c_{4}(\lambda)$ bounded on bounded subsets of $\RR$ such that
\begin{eqnarray}\label{14}
\lambda x+f(z,x)\geq c_1 x^{q-1}-c_4(\lambda)x^{\tau-1}\ \mbox{for a.a.}\ z\in\Omega,\ \mbox{all}\ x\geq0.
\end{eqnarray}
This unilateral growth restriction on the reaction term of problem ($P_\lambda$) leads to the following auxiliary Robin problem:
$$\left\{\begin{array}{ll}
-\Delta u(z)+\xi(z)u(z)=c_1 u(z)^{q-1}-c_4(\lambda)u(z)^{\tau-1}\ \ \mbox{in}\ \Omega,\\ [0.3cm]
\displaystyle\frac{\partial u}{\partial n}+\beta(z)u=0\ \ \mbox{on}\ \partial\Omega,\ u\geq0.
\end{array}\right\}
\hspace{2cm} (A u_{\lambda})$$

\begin{proposition}\label{Proposition 7}
If hypotheses $H(\xi)'$, $H(\beta)$ hold and $\lambda\in\RR$, then problem ($A u_{\lambda}$) admits a unique positive solution $u_*^\lambda\in\textrm{int}\, C_+$.
\end{proposition}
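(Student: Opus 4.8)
The plan is to realize $u_*^\lambda$ as the global minimizer of a modified energy functional and then to prove uniqueness via Picone's identity. With $\mu>0$ as in \eqref{6}, I would introduce the truncated Carath\'eodory function
\[
\hat g_\lambda(z,x)=\begin{cases}0 & \text{if } x\le 0,\\ \mu x+c_1 x^{q-1}-c_4(\lambda)x^{\tau-1} & \text{if } 0<x,\end{cases}
\]
its primitive $\hat G_\lambda(z,x)=\int_0^x\hat g_\lambda(z,s)\,ds$, and the $C^1$-functional $\psi_\lambda(u)=\frac12\gamma(u)+\frac\mu2\|u\|_2^2-\int_\Omega\hat G_\lambda(z,u)\,dz$. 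Writing $u=u^+-u^-$ and using $\gamma(u)=\gamma(u^+)+\gamma(u^-)$, the negative part contributes $\frac12\gamma(u^-)+\frac\mu2\|u^-\|_2^2\ge\frac{c_0}2\|u^-\|^2$ by \eqref{6}, while for $u^+$ the leading term $\frac{c_4(\lambda)}\tau\|u^+\|_\tau^\tau$ (with $\tau>2$) absorbs, via Young's inequality on the bounded domain $\Omega$, the lower-order terms $\frac\mu2\|u^+\|_2^2$ and $\frac{c_1}q\|u^+\|_q^q$ (here $q<2<\tau$). Combined with $\frac12\gamma(u^+)\ge\frac{c_0}2\|u^+\|^2-\frac\mu2\|u^+\|_2^2$ this gives $\psi_\lambda(u)\ge\frac{c_0}2\|u\|^2-C$, so $\psi_\lambda$ is coercive; it is also sequentially weakly lower semicontinuous thanks to the compact embeddings, and so the Weierstrass theorem yields a global minimizer $u_*^\lambda$.

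Next I would check that $u_*^\lambda$ is a nontrivial positive solution. Testing at $t\hat u_1$ for small $t\in(0,1)$ gives $\psi_\lambda(t\hat u_1)=\frac{t^2}2\widehat\lambda_1-\frac{c_1 t^q}q\|\hat u_1\|_q^q+\frac{c_4(\lambda)t^\tau}\tau\|\hat u_1\|_\tau^\tau$, which is negative for $t$ small since $q<2$; hence $\psi_\lambda(u_*^\lambda)<0=\psi_\lambda(0)$ and $u_*^\lambda\ne0$. From $\psi_\lambda'(u_*^\lambda)=0$, testing with $-(u_*^\lambda)^-$ and invoking \eqref{6} forces $(u_*^\lambda)^-=0$, so $u_*^\lambda\ge0$, $u_*^\lambda\ne0$, and the truncation disappears, making $u_*^\lambda$ a weak solution of $(Au_\lambda)$. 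The regularity argument already used in Proposition \ref{Proposition 4} together with the strong maximum principle then give $u_*^\lambda\in\textrm{int}\,C_+$. Note that this works for every $\lambda\in\RR$, since $\lambda$ enters only through the positive constant $c_4(\lambda)$.

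The decisive step is uniqueness. Let $u,v\in\textrm{int}\,C_+$ both solve $(Au_\lambda)$ and set $k(x)=c_1x^{q-1}-c_4(\lambda)x^{\tau-1}$. Since $u,v$ are bounded away from $0$ on $\overline\Omega$, the functions $v^2/u$ and $u^2/v$ lie in $H^1(\Omega)$, so I would test the weak formulation for $u$ with $u-\tfrac{v^2}{u}$ and that for $v$ with $v-\tfrac{u^2}{v}$ and add. Because $u\big(u-\tfrac{v^2}{u}\big)+v\big(v-\tfrac{u^2}{v}\big)=0$ pointwise, the indefinite potential term and the Robin boundary term cancel exactly, leaving
\[
\int_\Omega R(v,u)\,dz+\int_\Omega R(u,v)\,dz=\int_\Omega (u^2-v^2)\Big(\frac{k(u)}u-\frac{k(v)}v\Big)\,dz,
\]
where $R(\cdot,\cdot)$ is the Picone expression from the proof of Proposition \ref{Proposition 5}. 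The left-hand side is $\ge0$ by Picone's identity, whereas the integrand on the right is $\le0$ pointwise because $\frac{k(x)}x=c_1x^{q-2}-c_4(\lambda)x^{\tau-2}$ is strictly decreasing on $(0,+\infty)$ (both exponents give a decreasing contribution since $q<2<\tau$), with equality only where $u=v$. Hence both sides vanish and $u=v$, giving uniqueness.

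I expect the main obstacle to be this uniqueness argument — specifically, engineering the test functions so that the sign-changing potential $\xi(\cdot)$ and the boundary term drop out, after which Picone's identity and the strict monotonicity of $x\mapsto k(x)/x$ close the argument. A secondary technical issue is the coercivity of $\psi_\lambda$, where the indefiniteness of $\gamma(\cdot)$ on the negative part must be handled by \eqref{6} and the superlinear term $\|u^+\|_\tau^\tau$ used to dominate the concave and $L^2$ contributions.
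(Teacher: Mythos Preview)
Your proposal is correct and follows essentially the paper's approach: existence by direct minimization of the same functional (your $\psi_\lambda$ coincides with the paper's after unwinding the truncation), nontriviality via $q<2$, positivity via \eqref{6}, and regularity plus the strong maximum principle for $u_*^\lambda\in\textrm{int}\,C_+$. Two minor differences are worth noting: your coercivity argument is cleaner than the paper's, since you use \eqref{6} on both $u^\pm$ together with the elementary pointwise bound $\tfrac{c_4(\lambda)}{\tau}x^\tau-\tfrac{c_1}{q}x^q-\tfrac{\mu}{2}x^2\ge -C$ on $[0,\infty)$, whereas the paper invokes the specific choice $\tau>\tfrac{2N}{N-1}$ and an equivalent norm; and for uniqueness the paper's \emph{primary} proof is a comparison argument (take the largest $t>0$ with $tv_*^\lambda\le u_*^\lambda$ and use the strong maximum principle), while your Picone computation matches exactly the \emph{alternative} proof the paper records in the Remark following Proposition~\ref{Proposition 7}.
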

\begin{proof}
First, we establish the existence of a positive solution for problem ($A u_\lambda$). So, we introduce the $C^1$-functional $\psi_{\lambda}:H^1(\Omega)\rightarrow\RR$ defined by
$$\psi_\lambda(u)=\frac{1}{2}\gamma(u)+\frac{\mu}{2}\|u^-\|_2^2-\frac{c_1}{q}\|u^+\|_q^q+\frac{c_4(\lambda)}{\tau}\|u^{+}\|_\tau^\tau\ \ \mbox{for all}\ u\in H^1(\Omega).$$
By Lemma \ref{Lemma 3}, we have
\begin{align}\label{15}
\psi_\lambda(u)&=\frac{1}{2}\gamma(u^-)+\frac{\mu}{2}\|u^-\|_2^2+\frac{1}{2}\gamma(u^{+})+\frac{c_4(\lambda)}{\tau}\|u^+\|_\tau^\tau
-\frac{c_1}{q}\|u^+\|_q^q\nonumber\\ \vspace{0.3cm}
&\geq\frac{\hat{c}}{2}\|u^-\|^2+
\frac{1}{2}\gamma(u^{+})+\frac{c_4(\lambda)}{\tau}\|u^+\|_\tau^\tau-\frac{c_1}{q}\|u^{+}\|_q^q\,.
\end{align}
Since for $N\geq3$, $s>N$, by the Sobolev embedding theorem, we have $u^2\in L^{s'}(\Omega)$ and so due to H\"{o}lder's inequality,
$$\left|\int_\Omega\xi(z)(u^+)^2\,dz\right|\leq\|\xi\|_s\|u^+\|_{2s'}^2.$$
Note that $s'<N'=\frac{N}{N-1}$ (recall that for $1\leq\tau<\infty$, $\tau'\in(1,+\infty]$ and $\frac{1}{\tau}+\frac{1}{\tau'}=1$), hence $2s'<\frac{2N}{N-1}<\tau$. Therefore
$$\left|\int_{\Omega}\xi(z)(u^{+})^{2}\,dz\right|\leq c_{5}\|u^{+}\|_{\tau}^{2}\ \mbox{for some}\ c_{5}>0.$$
Thus we have
\begin{align}\label{16}
&\frac{1}{2}\gamma(u^+)+\frac{c_4(\lambda)}{\tau}\|u^+\|_\tau^\tau-\frac{c_1}{q}\|u^{+}\|_q^q \nonumber\\ \vspace{0.3cm}
\geq&\frac{1}{2}\|D u^+\|_2^{2}+\frac{c_4(\lambda)}{\tau}\|u^+\|_\tau^\tau-c_5\|u^+\|_\tau^2-c_{6}\|u^+\|_\tau^q\ \ \mbox{for some}\ c_6>0 \nonumber\\ \vspace{0.3cm}
&\hspace{7cm}(\mbox{recall that}\ q<\tau)\nonumber\\ \vspace{0.3cm}
\geq& \frac{1}{2}\|D u^+\|_2^2+\frac{c_4(\lambda)}{\tau}\|u^+\|_\tau^\tau-c_7\left(\|u^+\|_{\tau}^2+1\right)\ \mbox{for some}\ c_7>0\nonumber\\ \vspace{0.3cm}
=&\frac{1}{2}\|D u^+\|_2^2+\left[\frac{c_4(\lambda)}{\tau}\|u^+\|_\tau^{\tau-2}-c_7\right]\|u^+\|_{\tau}^2-c_7.
\end{align}

We return to \eqref{15}, use \eqref{16} and recall that $y\mapsto\left[\|y\|_\tau^2+\|Dy\|_2^2\right]^{1/2}$ is an equivalent norm on the Sobolev space $H^1(\Omega)$ (see, for example, Gasinski \& Papageorgiou \cite[p. 227]{6}). So, from \eqref{16} we infer that $\psi_\lambda(\cdot)$ is coercive.

The cases $N=1,2$ are straightforward because
\begin{itemize}
\item if $N=1$, then $H^1(\Omega)\hookrightarrow C(\overline\Omega)$ (compactly);
\item if $N=2$, then $H^{1}(\Omega)\hookrightarrow L^\tau(\Omega)$ for all $\tau\in[1,+\infty)$ (compactly).
\end{itemize}
The Sobolev embedding theorem and the compactness of the trace map, imply that $\psi_\lambda$ is sequentially weakly lower semicontinuous. So, we can find $u_{*}^\lambda\in H^1(\Omega)$ such that
\begin{eqnarray}\label{17}
\psi_\lambda(u_{*}^\lambda)=\inf\left[\psi_\lambda(u):\; u\in H^{1}(\Omega)\right].
\end{eqnarray}
As before (see the proof of Proposition \ref{Proposition 6}), exploiting the fact that $q<2<\tau$, we obtain
\begin{align*}
&\psi_\lambda(u_{*}^\lambda)<0=\psi_\lambda(0),\\ \vspace{0.3cm}
\Rightarrow\, & u_{*}^\lambda\neq0.
\end{align*}
By \eqref{17} we have
\begin{align}\label{18}
& \psi_\lambda'(u_{*}^\lambda)=0, \nonumber\\ \vspace{0.3cm}
\Rightarrow\, & \langle A(u_{*}^\lambda),h\rangle+\int_{\Omega}\xi(z)u_{*}^\lambda h\,dz+\int_{\partial\Omega}\beta(z) u_{*}^\lambda h\, d\sigma-\mu\int_\Omega(u_{*}^\lambda)^- h\,dz \nonumber\\ \vspace{0.3cm}
& =c_1\int_{\Omega}((u_{*}^\lambda)^{+})^{q-1}h\,dz-c_{4}(\lambda)\int_{\Omega}((u_{*}^\lambda)^+)^{\tau-1}h\,dz\ \ \mbox{for all}\ h\in H^1(\Omega).
\end{align}
In \eqref{18} we choose $h=-(u_{*}^\lambda)^-\in H^1(\Omega)$. Then
\begin{align*}
&\gamma((u_{*}^\lambda)^-)+\mu\|(u_*^\lambda)^-\|_2^2=0, \\ \vspace{0.3cm}
\Rightarrow\, & \hat{c}\|(u_{*}^\lambda)^-\|^2\leq0\ \ (\mbox{see \eqref{6}}), \\ \vspace{0.3cm}
\Rightarrow\, & u_{*}^\lambda\geq0,\; u_{*}^\lambda\neq0.
\end{align*}

 Next, we infer from \eqref{18}  that $u_{*}^\lambda$ is a positive solution of ($A u_\lambda$). Again, using Lemmata 5.1 and 5.2 of Wang \cite{14}, we infer that $u_{*}^\lambda\in C_{+}\setminus\{0\}$. Moreover,  we have
\begin{align*}
&\Delta u_{*}^\lambda(z)\leq c_4(\lambda)u_{*}^\lambda(z)^{\tau-1}+\xi(z)u_{*}^\lambda(z)\ \ \mbox{for a.a.}\ z\in\Omega,\\ \vspace{0.3cm}
\Rightarrow\, & \Delta u_{*}^\lambda(z)\leq\left[c_{4}(\lambda)\|u_{*}^\lambda\|_\infty^{\tau-2}+\|\xi^{+}\|_{\infty}\right]u_{*}^\lambda(z)\ \ \mbox{for a.a.}\ z\in\Omega\\ \vspace{0.3cm}
&\hspace{5cm} (\mbox{see hypothesis}\ H(\xi)'),\\ \vspace{0.3cm}
\Rightarrow\, & u_{*}^\lambda\in\textrm{int}\, C_{+}\ (\mbox{by the strong maximum principle}).
\end{align*}

Next, we prove the uniqueness of this positive solution. So let $v_{*}^{\lambda}\in H^1(\Omega)$ be another positive solution for ($A u_\lambda$). As above, we show that $v_{*}^{\lambda}\in\textrm{int}\, C_{+}$. Let $t>0$ be the biggest real number such that $t v_{*}^\lambda\leq u_{*}^\lambda$ (see Marano \& Papageorgiou \cite[Proposition 2.1]{8}).

Suppose that $0<t<1$. Let $\rho=\|u_{*}^{\lambda}\|_{\infty}$ and let $\hat{\xi_{\rho}}>0$ be such that
$$x\mapsto c_{1}x^{q-1}-c_{4}(\lambda)x^{\tau-1}+\hat\xi_\rho x$$
is nondecreasing on $[0,\rho]$. We have
\begin{align*}
-\Delta(t v_{*}^\lambda)+(\xi(z)+\hat{\xi}_\rho)(t v_{*}^\lambda)= & t\left[-\Delta v_{*}^\lambda+(\xi(z)+\hat{\xi}_{\rho})v_{*}^\lambda\right]\\ \vspace{0.3cm}
= & t\left[c_1(v_{*}^\lambda)^{q-1}-c_{4}(\lambda)(v_{*}^\lambda)^{\tau-1}+\hat\xi_{\rho}v_{*}^\lambda\right]\\ \vspace{0.3cm}
< & c_1(t v_{*}^\lambda)^{q-1}-c_4(\lambda)(t v_{*}^\lambda)^{\tau-1}+\hat\xi_\rho(t v_{*}^\lambda)\\ \vspace{0.3cm}
&{}\hspace{2cm}(\mbox{since}\ t\in(0,1)\ \mbox{and}\ q<2<\tau)\\ \vspace{0.3cm}
\leq & c_1(u_{*}^\lambda)^{q-1}-c_{4}(u_{*}^\lambda)^{\tau-1}+\hat\xi_\rho u_{*}^\lambda\\ \vspace{0.3cm}
&{}\hspace{2cm}(\mbox{since}\ t v_{*}^\lambda\leq u_{*}^\lambda)\\ \vspace{0.3cm}
= & -\Delta u_{*}^\lambda+(\xi(z)+\hat{\xi}_{\rho})u_{*}^\lambda,
\end{align*}
\begin{align*}
&\Rightarrow \Delta (u_{*}^\lambda-t v_{*}^\lambda)<(\xi(z)+\hat\xi_{\rho})(u_{*}^\lambda-t v_{*}^\lambda)\leq(\|\xi^{+}\|_\infty+\hat\xi_{\rho})(u_{*}^\lambda-t v_{*}^\lambda)\\ \vspace{0.3cm}
&{}\hspace{7.3cm}(\mbox{see hypothesis}\ H(\xi)'),\\ \vspace{0.3cm}
&\Rightarrow u_{*}^\lambda-t v_{*}^\lambda\in\textrm{int}\, C_+\ (\mbox{by the strong maximum principle}).
\end{align*}
 However, this contradicts the maximality of $t\in(0,1)$. Hence $t\geq1$ and so we have $v_{*}^{\lambda}\leq u_{*}^\lambda$.

If in the above argument we reverse the roles of $u_{*}$ and $v_{*}$, we obtain
\begin{align*}
& u_{*}^\lambda\leq v_{*}^\lambda,\\ \vspace{0.3cm}
\Rightarrow\,  & u_{*}^\lambda=v_{*}^\lambda
\end{align*}
and this proves the uniqueness of the positive solution of problem ($A u_\lambda$).
\end{proof}
\vspace*{4pt}\noindent\textbf{Remark.} We can have an alternative proof of the uniqueness based on the Picone identity. The argument goes as follows. Suppose again that $v_{*}^\lambda\in H^{1}(\Omega)$ is another positive solution of ($A u_\lambda$). We have $v_{*}^\lambda\in\textrm{int}\, C_{+}$. Therefore
\begin{align}
& \int_{\Omega}\left(\frac{c_1}{(u_{*}^\lambda)^{2-q}}-c_4(\lambda)(u_{*}^\lambda)^{\tau-2}\right)\left((u_{*}^\lambda)^{2}-(v_{*}^\lambda)^{2}
\right)dz\nonumber\\ \vspace{0.3cm}
&\ =\int_\Omega
\left(c_1(u_{*}^\lambda)^{q-1}-c_4(\lambda)(u_{*}^\lambda)^{\tau-1}\right)\left(u_{*}^\lambda-\frac{(v_{*}^\lambda)^{2}}{u_{*}^\lambda}\right)dz
\nonumber\\ \vspace{0.3cm}
&\ =\int_\Omega\left(-\Delta_p u_{*}^\lambda+\xi(z)u_{*}^\lambda\right)\left(u_{*}^\lambda-\frac{(v_{*}^\lambda)^{2}}{u_{*}^\lambda}\right)dz
\nonumber\\ \vspace{0.3cm}
&\ =\int_\Omega\left(Du_{*}^\lambda,D\left(u_{*}^\lambda-\frac{(v_{*}^\lambda)^2}{u_{*}^\lambda}\right)\right)_{\RR^N}\,dz
+\int_\Omega\xi(z)u_{*}^\lambda\left(u_{*}^\lambda-\frac{(v_{*}^\lambda)^{2}}{u_{*}^\lambda}\right)dz\nonumber\\ \vspace{0.3cm}
&\ \ \ \ + \int_{\partial\Omega}\beta(z)u_{*}^{\lambda}\left(u_{*}^\lambda-\frac{(v_{*}^\lambda)^{2}}{u_{*}^\lambda}\right)d\sigma\nonumber\\ \vspace{0.3cm}
&\ \ \ \ (\mbox{using Green's identity, see \cite[p. 210]{6}})\nonumber
\end{align}\begin{align}\label{19}& =\|Du_{*}^\lambda\|_{2}^{2}-\|Dv_{*}^\lambda\|_{2}^{2}+\int_\Omega R(v_{*}^\lambda,u_{*}^\lambda)dz+\int_\Omega\xi(z)((u_{*}^\lambda)^2-(v_{*}^\lambda)^2)dz\nonumber\\ \vspace{0.3cm}
&\ \ \ \ + \int_{\partial\Omega}\beta(z)((u_{*}^\lambda)^2-(v_{*}^\lambda)^2)d\sigma\\ \vspace{0.3cm}
&\ \ \ \ (\mbox{see the proof of Proposition}\ \ref{Proposition 5}).\nonumber
\end{align}
Interchanging the roles of $u_{*}^\lambda$ and $v_{*}^\lambda$ in the above argument, we obtain
\begin{align}\label{20}
& \int_{\Omega}\left(\frac{c_1}{(v_{*}^\lambda)^{2-q}}-c_4(\lambda)(v_{*}^\lambda)^{\tau-2}\right)\left((v_{*}^\lambda)^{2}-(u_{*}^\lambda)^{2}
\right)dz\nonumber\\ \vspace{0.3cm}
& =\|Dv_{*}^\lambda\|_{2}^{2}-\|Du_{*}^\lambda\|_{2}^{2}+\int_\Omega R(u_{*}^\lambda,v_{*}^\lambda)dz+\int_\Omega\xi(z)((v_{*}^\lambda)^2-(u_{*}^\lambda)^2)dz\nonumber\\ \vspace{0.3cm}
&\ \ \ \ + \int_{\partial\Omega}\beta(z)((v_{*}^\lambda)^2-(u_{*}^\lambda)^2)d\sigma.
\end{align}
We add \eqref{19} and \eqref{20} and use Picone's identity. We obtain
\begin{align}\label{21}
0 & \leq\int_\Omega\left[R(v_{*}^\lambda,u_{*}^\lambda)+R(u_{*}^\lambda,v_{*}^\lambda)\right]dz\nonumber \\ \vspace{0.3cm}
& =\int_\Omega\left[c_1\left(\frac{1}{(u_{*}^\lambda)^{2-q}}-\frac{1}{(v_{*}^\lambda)^{2-q}}\right)
-c_4(\lambda)\left((u_{*}^\lambda)^{\tau-2}-(v_{*}^\lambda)^{\tau-2}\right)\right]\left((u_{*}^\lambda)^{2}-(v_{*}^\lambda)^{2}\right)dz.
\end{align}
Since the function $x\mapsto\frac{c_1}{x^{2-q}}-c_4(\lambda)x^{\tau-2}$ is strictly decreasing on $(0,+\infty)$ (recall $q<2<\tau$),  it follows from \eqref{21} that
$$u_{*}^\lambda=v_{*}^\lambda.$$
So, we again get the uniqueness of the positive solution of $(Au_{\lambda})$.

Note also that since $\lambda\mapsto c_{4}(\lambda)$ is bounded on bounded sets of $\RR$, if $B\subseteq\RR$ is bounded and $\hat{c}_4\geq c_4(\bar\lambda)$ for all $\bar\lambda\in B$, then the unique solution $\bar{u}\in\textrm{int}\, C_+$ of
$$\left\{\begin{array}{ll}
-\Delta u(z)+\xi(z)u(z)=c_1u(z)^{q-1}-\hat{c}_4u(z)^{\tau-1}\ \ \mbox{in}\ \Omega,\\ [0.3cm]
\displaystyle\frac{\partial u}{\partial n}+\beta(z)u=0\ \ \mbox{on}\ \partial\Omega,
\end{array}\right\}
$$
satisfies $\bar{u}\leq u_{*}^{\bar{\lambda}}$ for all $\bar{\lambda}\in B$.

Using Proposition \ref{Proposition 7}, we can produce a lower bound for the solution set $S(\lambda)$.
\begin{proposition}\label{Proposition 8}
If hypotheses $H(\xi)'$, $H(\beta)$, $H_1$ hold and $\lambda\in\mathcal{L}=(-\infty,\widehat{\lambda}_1)$, then $u_{*}^{\lambda}\leq u$ for all $u\in S(\lambda)$.
\end{proposition}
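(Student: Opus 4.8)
The plan is to realize $u_{*}^{\lambda}$ as the minimizer of a functional built by truncating the reaction of the auxiliary problem ($Au_\lambda$) at the level of a prescribed $u\in S(\lambda)$, and then to show that this minimizer both solves ($Au_\lambda$) and lies below $u$. So I would fix $u\in S(\lambda)$; by Proposition \ref{Proposition 4} we have $u\in\textrm{int}\,C_+$, in particular $u(z)>0$ on $\overline\Omega$. With $\mu>0$ as in \eqref{6}, I introduce the Carath\'eodory truncation
$$\hat g_\lambda(z,x)=\begin{cases}0 & \mbox{if }x\le0,\\ c_1x^{q-1}-c_4(\lambda)x^{\tau-1}+\mu x & \mbox{if }0<x\le u(z),\\ c_1u(z)^{q-1}-c_4(\lambda)u(z)^{\tau-1}+\mu u(z) & \mbox{if }u(z)<x,\end{cases}$$
its primitive $\hat G_\lambda(z,x)=\int_0^x\hat g_\lambda(z,s)\,ds$, and the $C^1$-functional $\hat\psi_\lambda(w)=\frac12\gamma(w)+\frac\mu2\|w\|_2^2-\int_\Omega\hat G_\lambda(z,w)\,dz$. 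Because the truncation forces $\hat G_\lambda$ to grow at most linearly in $w$, inequality \eqref{6} yields coercivity, and the Sobolev embedding together with the compactness of the trace map yield sequential weak lower semicontinuity; so $\hat\psi_\lambda$ attains its infimum at some $\tilde u\in H^1(\Omega)$.

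Next I would check that $\tilde u$ is nontrivial and nonnegative. Evaluating at $t\hat u_1$ with $t>0$ small enough that $t\hat u_1\le u$ (possible since $u,\hat u_1\in\textrm{int}\,C_+$), the $\mu$-terms cancel and one is left with a leading term of order $t^q$ coming from $-\frac{c_1}{q}\|t\hat u_1\|_q^q$; since $q<2<\tau$ this dominates, giving $\hat\psi_\lambda(\tilde u)<0=\hat\psi_\lambda(0)$, hence $\tilde u\ne0$. Writing $\hat\psi_\lambda'(\tilde u)=0$ and testing with $-\tilde u^-$ (on which $\hat g_\lambda$ vanishes) gives $\gamma(\tilde u^-)+\mu\|\tilde u^-\|_2^2=0$, so $\tilde u^-=0$ by \eqref{6} and thus $\tilde u\ge0$, $\tilde u\ne0$.

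The crux is the comparison $\tilde u\le u$, and this is the step I expect to be the main obstacle. I would test the Euler--Lagrange identity for $\tilde u$ with $h=(\tilde u-u)^+\in H^1(\Omega)$; on $\{\tilde u>u\}$ the truncation is active, so the right-hand side there is $c_1u^{q-1}-c_4(\lambda)u^{\tau-1}+\mu u$. I then test the weak formulation of ($P_\lambda$) satisfied by $u$ with the same $h$, add $\mu\int_\Omega u(\tilde u-u)^+\,dz$ to both sides, and subtract the two identities. The $\mu u$ contributions cancel and the difference of the reactions becomes $\int_{\{\tilde u>u\}}\bigl[c_1u^{q-1}-c_4(\lambda)u^{\tau-1}-\lambda u-f(z,u)\bigr](\tilde u-u)^+\,dz$, which is $\le0$ by the unilateral growth estimate \eqref{14}. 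On the left I am left with $\gamma((\tilde u-u)^+)+\mu\|(\tilde u-u)^+\|_2^2\ge c_0\|(\tilde u-u)^+\|^2$ by \eqref{6}. Hence $(\tilde u-u)^+=0$, i.e. $\tilde u\le u$. The delicate point is precisely that the $+\mu x$ built into the truncation is what makes \eqref{6} absorb the indefinite, unbounded potential $\xi$ on the left-hand side, while \eqref{14} fixes the sign on the right.

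Finally, since $0\le\tilde u\le u$ the truncation never triggers, so the Euler--Lagrange equation (with the $\mu$-terms canceling) shows that $\tilde u$ is a nontrivial nonnegative weak solution of ($Au_\lambda$). By the uniqueness established in Proposition \ref{Proposition 7} we conclude $\tilde u=u_{*}^{\lambda}$, and therefore $u_{*}^{\lambda}=\tilde u\le u$. As $u\in S(\lambda)$ was arbitrary, this gives $u_{*}^{\lambda}\le u$ for all $u\in S(\lambda)$.
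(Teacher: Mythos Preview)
Your proof is correct and follows essentially the same route as the paper: truncate the reaction of $(Au_\lambda)$ at the level of a given $u\in S(\lambda)$, minimize the resulting coercive functional $\hat\psi_\lambda$, use test functions $-\tilde u^-$ and $(\tilde u-u)^+$ together with \eqref{6} and \eqref{14} to place the minimizer in $[0,u]\setminus\{0\}$, and then invoke the uniqueness from Proposition~\ref{Proposition 7} to identify $\tilde u$ with $u_*^\lambda$. Your added observation that one should take $t$ small enough so that $t\hat u_1\le u$ (possible since $u,\hat u_1\in\textrm{int}\,C_+$) makes the nontriviality step fully explicit, whereas the paper simply refers back to the proof of Proposition~\ref{Proposition 6}.
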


\begin{proof}
Let $u\in S(\lambda)$ and consider the following Carath\'{e}odory function
\begin{eqnarray}\label{22}\hat{g}_{\lambda}(z,x)=\left\{\begin{array}{ll}
0 & \mbox{if}\ x<0\\ [0.3cm]
c_1 x^{q-1}-c_4(\lambda)x^{\tau-1}+\mu x & \mbox{if}\ 0\leq x\leq u(z)\\ [0.3cm]
c_1u(z)^{q-1}-c_4(\lambda)u(z)^{\tau-1}+\mu u(z) & \mbox{if}\ u(z)<x.
\end{array}\right.
\end{eqnarray}
Here $\mu>0$ is as in \eqref{6}. We set $\widehat{G}_\lambda(z,x)=\int_{0}^{x}\hat{g}_\lambda(z,s)ds$ and consider the $C^1$-functional $\widehat\psi_\lambda:H^1(\Omega)\rightarrow\RR$ defined by
$$\widehat\psi_\lambda(u)=\frac{1}{2}\gamma(u)+\frac{\mu}{2}\|u\|_2^2-\int_\Omega\widehat{G}_\lambda(z,u)dz \ \mbox{for all}\ u\in H^1(\Omega).$$

 Invoking \eqref{6} and \eqref{22}, we see that $\widehat\psi_{\lambda}$ is coercive. It is also sequentially weakly lower semicontinuous. Therefore we can find $\tilde{u}_{*}^{\lambda}\in H^1(\Omega)$ such that
\begin{eqnarray}\label{23}
\widehat\psi_\lambda(\tilde u_{*}^\lambda)=\inf\left[\psi_\lambda(u):\, u\in H^1(\Omega)\right].
\end{eqnarray}

As before (see the proof of Proposition \ref{Proposition 6}), exploiting the fact that $q<2<\tau$, we can show that
\begin{align*}
 & \widehat\psi(\tilde u_{*}^\lambda)<0=\widehat\psi_\lambda(0), \\ \vspace{0.3cm}
\Rightarrow\, & \tilde u_{*}^\lambda\neq0.
\end{align*}
By \eqref{23} we have
\begin{align}\label{24}
 & \widehat\psi_\lambda'(\tilde u_{*}^\lambda)=0,\nonumber \\ \vspace{0.3cm}
\Rightarrow\, & \langle A(\tilde u_{*}^\lambda),h\rangle+\int_\Omega\xi(z)\tilde u_{*}^\lambda h\, dz+\int_{\partial\Omega}\beta(z)\tilde u_{*}^\lambda h\, d\sigma+\mu\int_{\Omega}\tilde u_{*}^\lambda h\, dz=\int_{\Omega}\hat g_\lambda(z,\tilde u_{*}^\lambda)h\, dz \\ \vspace{0.3cm}
& \hspace{8cm}\ \mbox{for all}\ h\in H^{1}(\Omega).\nonumber
\end{align}
In \eqref{24} we choose $h=-(\tilde u_{*}^\lambda)^-\in H^1(\Omega)$ and obtain
\begin{align*}
 & \gamma((\tilde u_{*}^\lambda)^-)+\mu\|(\tilde u_{*}^\lambda)^-\|_2^2=0\ (\mbox{see \eqref{22}}) \\ \vspace{0.3cm}
\Rightarrow\, & c_1\|(\tilde u_{*}^\lambda)^-\|^2\leq0\ (\mbox{see \eqref{6}}) \\ \vspace{0.3cm}
\Rightarrow\, & \tilde u_{*}^\lambda\geq0,\ \tilde u_{*}^\lambda\neq0.
\end{align*}
Next, in \eqref{24} we choose $(\tilde u_{*}^\lambda-u)^+\in H^1(\Omega)$. Then
\begin{align*}
\langle & A(\tilde u_{*}^\lambda),(\tilde u_{*}^\lambda-u)^+\rangle+\int_\Omega\limits\xi(z)\tilde u_{*}^\lambda(\tilde u_{*}^\lambda-u)^+ dz+\int_{\partial\Omega}\limits\beta(z)\tilde u_{*}^\lambda(\tilde u_{*}^\lambda-u)^+d\sigma\\
&\qquad+\int_\Omega\limits\mu\tilde u_{*}^\lambda(\tilde u_{*}^\lambda-u)^+dz \\
& =\int_\Omega\limits\left(c_1 u^{q-1}-c_4(\lambda)u^{\tau-1}\right)(\tilde{u}_{*}^\lambda-u)^{+}dz+\mu\int_\Omega\limits u(\tilde u_{*}^\lambda-u)^+dz\ (\mbox{see \eqref{22}}) \\
& \leq\int_\Omega\limits(\lambda u+f(z,u))(\tilde u_{*}^\lambda-u)^{+}dz+\mu\int_\Omega\limits u(\tilde u_{*}^\lambda-u)^+dz\ (\mbox{see \eqref{14}}) \\
& =\langle A(u),(\tilde u_{*}^\lambda-u)^+\rangle+\int_\Omega\limits\xi(z) u(\tilde u_{*}^\lambda-u)^+dz+\int_{\partial\Omega}\limits\beta(z)u(\tilde u_{*}^\lambda-u)^+d\sigma\\
&+\int_{\Omega}\limits\mu u(\tilde u_{*}^\lambda-u)^+dz,
 \hspace{2cm} (\mbox{since}\ u\in S(\lambda))
\end{align*}
\begin{align*}
&\Rightarrow\, \gamma((\tilde u_{*}^\lambda-u)^+)+\mu\|(\tilde u_{*}^\lambda-u)^+\|_2^2\leq0,\\ \vspace{0.3cm}
&\Rightarrow\, c_0\|(\tilde u_{*}^\lambda-u)^+\|^2\leq0\ (\mbox{see \eqref{6}}),\\ \vspace{0.3cm}
&\Rightarrow\, \tilde u_{*}^\lambda\leq u.
\end{align*}
Therefore we have proved that
\begin{eqnarray}\label{25}
\tilde u_{*}^\lambda\in[0,u]=\{v\in H^1(\Omega):\, 0\leq v(z)\leq u(z)\, \mbox{for a.a}\ z\in \Omega\},\; \tilde u_{*}^\lambda\neq0.
\end{eqnarray}
By \eqref{22} and \eqref{25}, we see that equation \eqref{24} becomes
\begin{align*}
&\langle A(\tilde u_{*}^\lambda),h\rangle+\int_\Omega\xi(z)\tilde u_{*}^\lambda h\, dz+\int_{\partial\Omega}\beta(z)\tilde u_{*}^\lambda h\, d\sigma=\int_\Omega\left[c_1(\tilde u_{*}^\lambda)^{q-1}-c_4(\lambda)(u_{*}^\lambda)^{\tau-1}\right]dz \\ \vspace{0.3cm}
& \hspace{8cm} \mbox{for all}\ h\in H^1(\Omega), \\ \vspace{0.3cm}
&\Rightarrow\, \tilde u_{*}^\lambda\ \mbox{is a positive solution of}\ (Au_\lambda), \\ \vspace{0.3cm}
&\Rightarrow\, \tilde u_{*}^\lambda=u_{*}^\lambda\in\textrm{int}\, C_{+}\ (\mbox{see Proposition \ref{Proposition 7}}).
\end{align*}
Therefore $u_{*}^\lambda\leq u$ for all $u\in S(\lambda)$.
\end{proof}

This lower bound leads to the existence of a smallest positive solution for problem ($P_\lambda$), $\lambda\in\mathcal{L}=(-\infty,\widehat\lambda_1)$.

\begin{proposition}\label{Proposition 9}
If hypotheses $H(\xi)'$, $H(\beta)$, $H_{1}$ hold and $\lambda\in\mathcal{L}=(-\infty,\widehat{\lambda}_1)$, then problem $(P_\lambda)$ admits a smallest positive solution $\bar u_\lambda\in S(\lambda)\subseteq\textrm{int}\, C_{+}$.
\end{proposition}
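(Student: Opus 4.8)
The plan is to realize $\bar u_\lambda$ as the pointwise infimum of the solution set $S(\lambda)$. The first and decisive step is to show that $S(\lambda)$ is \emph{downward directed}: given $u_1,u_2\in S(\lambda)$ there exists $u\in S(\lambda)$ with $u\leq\min\{u_1,u_2\}$. To produce such a $u$ I would set $m=\min\{u_1,u_2\}\in\textrm{int}\, C_+$ and truncate the reaction of Proposition \ref{Proposition 6} at $m$, replacing $g_\lambda(z,\cdot)$ by the Carath\'eodory function that equals $(\lambda+\mu)x+f(z,x)$ on $[0,m(z)]$, equals its value at $m(z)$ for $x>m(z)$, and vanishes for $x<0$. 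The associated energy functional is coercive by \eqref{6} and sequentially weakly lower semicontinuous, so it has a global minimizer $\tilde u$; exactly as in Proposition \ref{Proposition 6}, the concave term of $H_1(\mathrm{iii})$ together with $m\in\textrm{int}\, C_+$ forces the minimal value to be negative, whence $\tilde u\neq0$. Testing the Euler equation with $-\tilde u^-$ gives $\tilde u\geq0$, and testing with $(\tilde u-m)^+$, using that both $u_1$ and $u_2$ solve $(P_\lambda)$ so that $m$ is a (weak) supersolution, yields $\tilde u\leq m$. On the order interval $[0,m]$ the truncation is inactive, so $\tilde u\in S(\lambda)$ and $\tilde u\leq\min\{u_1,u_2\}$.

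Second, I would invoke the standard structural lemma for downward directed sets in the ordered space $H^1(\Omega)$ (see Hu \& Papageorgiou): since $S(\lambda)$ is downward directed, there is a decreasing sequence $\{u_n\}_{n\geq1}\subseteq S(\lambda)$ with $\bar u_\lambda:=\inf_n u_n=\inf S(\lambda)$ pointwise a.e. Proposition \ref{Proposition 8} then pins the sequence from below, $u_*^\lambda\leq u_n\leq u_1$ for all $n$, and since $u_1\in\textrm{int}\, C_+\subseteq C^1(\overline\Omega)$ the sequence $\{u_n\}$ is uniformly bounded in $L^\infty(\Omega)$.

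Third, I would show that $\bar u_\lambda$ is itself a solution and is nonzero. Testing the weak form \eqref{8} for $u_n$ with $u_n$ and invoking \eqref{6} together with the uniform $L^\infty$ bound on $f(\cdot,u_n)u_n$ (from $H_1(\mathrm{i})$) gives $c_0\|u_n\|^2\leq\gamma(u_n)+\mu\|u_n\|_2^2\leq\mathrm{const}$, so $\{u_n\}$ is bounded in $H^1(\Omega)$. Along a subsequence $u_n\xrightarrow{w}\bar u_\lambda$ in $H^1(\Omega)$ and $u_n\to\bar u_\lambda$ in $L^2(\Omega)$ and in $L^2(\partial\Omega)$, the limit being the pointwise infimum because the sequence decreases. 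I would then pass to the limit in the weak formulation of $(P_\lambda)$ for $u_n$: for each fixed $h\in H^1(\Omega)$ the principal term $\langle A(u_n),h\rangle\to\langle A(\bar u_\lambda),h\rangle$ by weak convergence, the potential and boundary integrals converge by the $L^2$-convergences, and $\int_\Omega f(z,u_n)h\,dz\to\int_\Omega f(z,\bar u_\lambda)h\,dz$ by dominated convergence using the uniform $L^\infty$ bound. Hence $\bar u_\lambda$ solves $(P_\lambda)$. Letting $n\to\infty$ in $u_*^\lambda\leq u_n$ gives $u_*^\lambda\leq\bar u_\lambda$ with $u_*^\lambda\in\textrm{int}\, C_+$, so $\bar u_\lambda\neq0$; thus $\bar u_\lambda\in S(\lambda)\subseteq\textrm{int}\, C_+$ by Proposition \ref{Proposition 4}. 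Finally $\bar u_\lambda=\inf S(\lambda)\leq u$ for every $u\in S(\lambda)$, so $\bar u_\lambda$ is the smallest positive solution.

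I expect the main obstacle to be the downward directedness step: producing a solution below $\min\{u_1,u_2\}$ requires the truncation–minimization argument together with a careful comparison showing $\tilde u\leq m$, where one must exploit that the minimum of two supersolutions is again a supersolution for the truncated problem. Once directedness and the lower barrier $u_*^\lambda$ of Proposition \ref{Proposition 8} are in hand, the $H^1$-boundedness of the decreasing sequence and the passage to the limit are routine.
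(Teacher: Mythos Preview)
Your proposal is correct and follows essentially the same route as the paper: establish that $S(\lambda)$ is downward directed, invoke the Hu--Papageorgiou lemma to extract a decreasing sequence realizing $\inf S(\lambda)$, pass to the limit in the weak formulation, and use the lower barrier $u_*^\lambda$ from Proposition~\ref{Proposition 8} to rule out the zero limit. The only notable difference is in the directedness step: the paper makes the supersolution property of $m=\min\{u_1,u_2\}$ rigorous via a Lipschitz mollification $\vartheta_\varepsilon$ of the sign function and a limiting argument, whereas you invoke the supersolution property of $m$ directly; your identification of this as the main obstacle is apt, and the paper's $\vartheta_\varepsilon$ device is precisely the tool that fills that gap.
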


\begin{proof}
As in Filippakis \& Papageorgiou \cite{5}, we can show that $S(\lambda)$ is downward directed, that is, if $u_1,u_2\in S(\lambda)$, then we can find $u\in S(\lambda)$ such that  $u\leq u_1$, $u\leq u_2$.
For completeness we sketch a proof. So, given $\varepsilon>0$ consider the function
$$\vartheta_\varepsilon (s)=\left\{\begin{array}{ll}
-\varepsilon\quad&\mbox{if}\ s<-\varepsilon\\
s\quad&\mbox{if}\ s\in[-\varepsilon,\varepsilon]\\
\varepsilon\quad&\mbox{if}\ s>\varepsilon\,.
\end{array}\right.
$$
Evidently, $\vartheta_\varepsilon(\cdot)$ is Lipschitz and so $\vartheta_\varepsilon ((u_1-u_2)^-)\in H^1(\Omega)$. Moreover, the chain rule for Sobolev functions implies that
$$D\vartheta_\varepsilon ((u_1-u_2)^-)=\vartheta_\varepsilon'((u_1-u_2)^-)D(u_1-u_2)^-.$$
Let $\psi\in C^1(\overline\Omega))$. Then we introduce the test functions
$$\eta_1=\vartheta_\varepsilon ((u_1-u_2)^-)\psi\quad\mbox{and}\quad \eta_2=(\varepsilon-\vartheta_\varepsilon ((y_1-y_2)^-)\psi),$$
which belong to $H^1(\Omega)\cap L^\infty (\Omega)$. We have
$$\langle \gamma'(u_1),\eta_1\rangle=\lambda\int_\Omega u_1\eta_1dz+\int_\Omega f(z,u_1)\eta_1dz,$$
$$\langle \gamma'(u_2),\eta_2\rangle=\lambda\int_\Omega u_2\eta_2dz+\int_\Omega f(z,u_2)\eta_2dz.$$
We add these two equalities and divide by $\varepsilon>0$. Taking into account that
$$\frac 1\varepsilon\,\vartheta_\varepsilon ((u_1-u_2)^-)(z)\rightarrow\chi_{\{u_1<u_2\}}(z)\quad \mbox{for a.a.}\ z\in\Omega\ \mbox{as}\ \varepsilon\rightarrow 0^+$$
and
$$\chi_{\{u_1\geq u_2\}}=1-\chi_{\{u_1<u_2\}}$$
we obtain
{\small$$\begin{array}{ll}
\hspace*{-10pt}&\displaystyle \langle \gamma'(u_1),\chi_{\{u_1<u_2\}}\psi\rangle+
\langle \gamma'(u_2),\chi_{\{u_1\geq u_2\}}\psi\rangle=\\
\hspace*{-15pt}&\displaystyle \lambda\int_{\{u_1<u_2\}}u_1\psi dz+ \lambda\int_{\{u_1\geq u_2\}}u_2\psi dz+
\int_{\{u_1<u_2\}}f(z,u_1)\psi dz+\int_{\{u_1\geq u_2\}}f(z,u_2)\psi dz.
\end{array}$$}

So, if $\bar{u}=\min\{u_1,u_2\}$, then $\bar{u}$ is an upper solution of ($P_\lambda$) and so by standard truncation techniques we can find $u\in S(\lambda)$ such that
$0\leq u\leq \bar{u}\leq\left\{\begin{array}{l} u_1\\ u_2\end{array}\right.$.

\smallskip
Then Lemma 3.10 of Hu \& Papageorgiou \cite[p. 178]{7}, implies that there exist $u_n\in S(\lambda)$, $n\in\NN$, $\{u_n\}_{n\geq1}$ decreasing such that
$$\inf S(\lambda)=\inf_{n\geq1}u_n.$$
For every $n\in\NN$, we have
\begin{align}\label{26}
&\langle A(u_n),h\rangle+\int_\Omega\xi(z)u_n h\, dz+\int_{\partial\Omega}\beta(z)u_n h\,d\sigma=\int_{\Omega}[\lambda u_n+f(z,u_n)]h\,dz \\ \vspace{0.3cm}
& \hspace{8cm}\ \mbox{for all}\ h\in H^1(\Omega).\nonumber
\end{align}
Evidently, $\{u_n\}_{n\geq1}\subseteq H^1(\Omega)$ is bounded and so we may assume that
\begin{eqnarray}\label{27}
u_n\xrightarrow{w}\bar u_\lambda\ \mbox{in}\ H^1(\Omega)\ \mbox{and}\ u_n\rightarrow\bar u_\lambda\ \mbox{in}\ L^2(\Omega)\ \mbox{and in}\ L^2(\partial\Omega).
\end{eqnarray}
In \eqref{26} we pass to the limit as $n\rightarrow\infty$ and use \eqref{27}. Then
\begin{align*}
&\langle A(\bar u_\lambda),h\rangle+\int_\Omega\xi(z)\bar u_\lambda h\, dz+\int_{\partial\Omega}\beta(z)\bar u_\lambda h\,d\sigma=\int_{\Omega}[\lambda \bar u_\lambda+f(z,\bar u_\lambda)]h\, dz \\ \vspace{0.3cm}
& \hspace{8cm}\ \mbox{for all}\ h\in H^1(\Omega), \\ \vspace{0.3cm}
\Rightarrow\, & \bar u_\lambda\ \mbox{is a solution of}\ (P_\lambda).
\end{align*}
Due to Proposition \ref{Proposition 8} we know that
\begin{align*}
&u_{*}^\lambda\leq u_n\ \mbox{for all}\ n\in\NN, \\ \vspace{0.3cm}
\Rightarrow\, & u_{*}^\lambda\leq\bar u_\lambda\ (\mbox{see \eqref{27}}),\\ \vspace{0.3cm}
\Rightarrow\, & \bar u_\lambda\in S(\lambda)\subseteq\textrm{int}\, C_{+}\ \mbox{and}\ \bar u_\lambda=\inf{S(\lambda)}.
\end{align*}
\end{proof}

We examine the monotonicity and continuity properties of the map $\lambda\mapsto\bar u_\lambda$ from $\mathcal{L}=(-\infty,\widehat\lambda_1)$ into $C^1(\overline\Omega)$.

\begin{proposition}\label{Proposition 10}
If hypotheses $H(\xi)'$, $H(\beta)$, $H_1$ hold, then the map $\lambda\mapsto\bar{u}_\lambda$ is nondecreasing and left continuous from $\mathcal{L}=(-\infty,\widehat{\lambda}_1)$ into $C^1(\overline\Omega)$.
\end{proposition}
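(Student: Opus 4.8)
The plan is to establish monotonicity first and then to combine it with uniform a priori estimates to deduce left continuity. Throughout, $\mu>0$ is the constant from \eqref{6} and $\hat c>0$ is the constant from Lemma \ref{Lemma 3}.

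For monotonicity, fix $\lambda<\eta<\widehat\lambda_1$. Since $\bar u_\eta\in S(\eta)\subseteq\textrm{int}\,C_+$ and $\eta>\lambda$, $\bar u_\eta\geq0$, I observe that $-\Delta\bar u_\eta+\xi(z)\bar u_\eta=\eta\bar u_\eta+f(z,\bar u_\eta)\geq\lambda\bar u_\eta+f(z,\bar u_\eta)$, so $\bar u_\eta$ is an upper solution of $(P_\lambda)$. I would then truncate the reaction $(\lambda+\mu)x+f(z,x)$ from above at the level $\bar u_\eta$, exactly as $\hat g_\lambda$ is built in the proof of Proposition \ref{Proposition 8}, and minimize the resulting functional, which is coercive and sequentially weakly lower semicontinuous. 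The concave term $c_1x^{q-1}$ together with a small multiple $t\hat u_1$ forces the minimizer $u$ to be nontrivial (as in Proposition \ref{Proposition 6}), and testing successively with $-u^-$ and $(u-\bar u_\eta)^+$, using \eqref{6} and \eqref{14}, yields $0\leq u\leq\bar u_\eta$ with $u\in S(\lambda)$. Minimality of $\bar u_\lambda$ then gives $\bar u_\lambda\leq u\leq\bar u_\eta$, so $\lambda\mapsto\bar u_\lambda$ is nondecreasing.

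For left continuity, let $\lambda_n\uparrow\lambda$ in $\mathcal L$; by monotonicity $\{\bar u_{\lambda_n}\}$ is nondecreasing and $\bar u_{\lambda_n}\leq\bar u_\lambda$. First I would bound this sequence in $H^1(\Omega)$: testing the weak form of $(P_{\lambda_n})$ with $\bar u_{\lambda_n}$ gives $\gamma(\bar u_{\lambda_n})=\lambda_n\|\bar u_{\lambda_n}\|_2^2+\int_\Omega f(z,\bar u_{\lambda_n})\bar u_{\lambda_n}\,dz$. Hypotheses $H_1$(i),(ii) yield $f(z,x)x\leq\varepsilon x^2+c_\varepsilon x$ for $x\geq0$; choosing $\varepsilon\in(0,\widehat\lambda_1-\lambda)$ and applying Lemma \ref{Lemma 3} with the constant weight $\vartheta\equiv\lambda+\varepsilon<\widehat\lambda_1$ (recall $\lambda_n\leq\lambda$), I obtain $\hat c\|\bar u_{\lambda_n}\|^2\leq\gamma(\bar u_{\lambda_n})-(\lambda+\varepsilon)\|\bar u_{\lambda_n}\|_2^2\leq c_\varepsilon\|\bar u_{\lambda_n}\|_1$, hence a uniform bound $\|\bar u_{\lambda_n}\|\leq M_1$. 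Rewriting $(P_{\lambda_n})$ as $-\Delta\bar u_{\lambda_n}=k_{\lambda_n}\bar u_{\lambda_n}+\vartheta_{\lambda_n}$ as in the proof of Proposition \ref{Proposition 4}, the $H^1$-bound and the boundedness of $\{\lambda_n\}$ make the coefficients uniformly bounded in the relevant norms, so Lemmata 5.1 and 5.2 of Wang \cite{14} and the Sobolev embedding give a uniform bound $\|\bar u_{\lambda_n}\|_{C^{1,\alpha}(\overline\Omega)}\leq M_2$. Since $C^{1,\alpha}(\overline\Omega)\hookrightarrow C^1(\overline\Omega)$ compactly, a subsequence converges in $C^1(\overline\Omega)$ to some $\tilde u\leq\bar u_\lambda$, and passing to the limit in the weak form shows $\tilde u$ solves $(P_\lambda)$. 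To see $\tilde u\neq0$ I would invoke the Remark after Proposition \ref{Proposition 7} with $B=\{\lambda_n:n\in\NN\}$ (bounded), which supplies a single $\bar u\in\textrm{int}\,C_+$ with $\bar u\leq u_*^{\lambda_n}\leq\bar u_{\lambda_n}$ for all $n$; thus $\tilde u\geq\bar u>0$ and $\tilde u\in S(\lambda)$. Minimality then forces $\bar u_\lambda\leq\tilde u\leq\bar u_\lambda$, so $\tilde u=\bar u_\lambda$; as the limit does not depend on the subsequence, the whole sequence converges and $\bar u_{\lambda_n}\to\bar u_\lambda$ in $C^1(\overline\Omega)$.

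I expect the uniform regularity step to be the main obstacle: one must verify that the $L^\infty$ and Calderón--Zygmund estimates of Wang \cite{14} are uniform in $n$, which rests on the uniform $H^1$-bound and on $\{\lambda_n\}$ being bounded. The second delicate point is nontriviality of the $C^1$-limit, for which the uniform lower solution from the Remark following Proposition \ref{Proposition 7} is indispensable; without it the limit could collapse to $0$. Note that only left continuity is claimed, since as $\lambda\uparrow\widehat\lambda_1$ the coercivity-based a priori bounds deteriorate and the minimal solutions need not be controlled from above.
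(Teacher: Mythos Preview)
Your proof is correct and follows essentially the same approach as the paper: truncation at $\bar u_\eta$ and minimization for monotonicity, then uniform $C^{1,\alpha}$ estimates plus the uniform lower barrier from the Remark after Proposition~\ref{Proposition 7} for left continuity. Two small remarks: in the monotonicity step the comparison when testing with $(u-\bar u_\eta)^+$ uses only that $\bar u_\eta\in S(\eta)$ and $\lambda<\eta$, not \eqref{14}; and your sandwich argument $\bar u_\lambda\leq\tilde u\leq\bar u_\lambda$ is actually a bit cleaner than the paper's pointwise contradiction at some $z_0$, though both rest on the same ingredients.
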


\begin{proof}
First, we establish the monotonicity of the map $\lambda\rightarrow\bar u_\lambda$. So let $\lambda<\eta<\widehat{\lambda}_1$ and consider $\bar u_\eta\in S(\eta)\subseteq\textrm{int}\, C_+$ the minimal positive solution of problem ($P_{\eta}$). We introduce the following Carath\'{e}odory function
\begin{eqnarray}\label{28}e_{\lambda}(z,x)=\left\{\begin{array}{ll}
0 & \mbox{if}\ x<0\\ [0.3cm]
(\lambda+\mu)x+f(z,x) & \mbox{if}\ 0\leq x\leq u_\eta(z)\\ [0.3cm]
(\lambda+\mu)u_\eta(z)+f(z,u_\eta(z)) & \mbox{if}\ u_\eta(z)<x.
\end{array}\right.
\end{eqnarray}
As always, $\mu>0$ is as in \eqref{6}. We set $E_\lambda(z,x)=\int_{0}^{x}e_\lambda(z,s)ds$ and consider the $C^1$-functional $w_{\lambda}:H^1(\Omega)\rightarrow\RR$ defined by
$$w_\lambda(u)=\frac{1}{2}\gamma(u)+\frac{\mu}{2}\|u\|_2^2-\int_\Omega E_\lambda(z,u)dz\ \mbox{for all}\ u\in H^1(\Omega).$$

 It follows from \eqref{6} and \eqref{28} that $w_\lambda(\cdot)$ is coercive. It is also sequentially weakly lower semicontinuous. So, we can find $\tilde u_{\lambda}\in H^{1}(\Omega)$ such that
\begin{align}\label{29}
& w_\lambda(\tilde u_\lambda)=\inf\left[w_\lambda(u):\, u\in H^1(\Omega)\right]<0=w_\lambda(0)\\ \vspace{0.3cm}
& \hspace{4cm}(\mbox{as before since}\ q<2).\nonumber
\end{align}
By \eqref{29} we have $\tilde u_\lambda\neq0$ and
\begin{align}\label{30}
&w_\lambda'(\tilde u_\lambda)=0,\nonumber \\ \vspace{0.3cm}
\Rightarrow & \langle A(\tilde u_\lambda),h\rangle+\int_\Omega\xi(z)\tilde u_\lambda h\,dz+\int_{\partial\Omega}\beta(z)\tilde u_\lambda h\, d\sigma+\mu\int_{\Omega}\tilde u_\lambda h\, dz=\int_{\Omega}e_{\lambda}(z,\tilde u_\lambda)h\, dz\\ \vspace{0.3cm}
& \hspace{8cm}\ \mbox{for all}\ h\in H^1(\Omega).\nonumber
\end{align}
As in the proof of Proposition \ref{Proposition 8}, choosing in \eqref{30}, first $h=-\tilde u_\lambda^{-}\in H^{1}(\Omega)$ and then $(\tilde u_{\lambda}-\bar u_\eta)^+\in H^1(\Omega)$, we can show that
\begin{eqnarray}\label{31}
\tilde u_\lambda\in[0,\bar u_\eta]=\left\{v\in H^1(\Omega):\, 0\leq v(z)\leq\bar u_\eta(z)\ \mbox{for a.a.}\ z\in\Omega\right\},\ \tilde u_\lambda\neq0.
\end{eqnarray}
Then we can infer from \eqref{28}, \eqref{30}, \eqref{31} that
\begin{align*}
& \tilde u_\lambda\in S(\lambda),\\ \vspace{0.3cm}
\Rightarrow\, & \bar u_\lambda\leq\tilde u_\lambda\leq\bar u_\eta,\\ \vspace{0.3cm}
\Rightarrow\, & \lambda\mapsto\bar u_\lambda\ \mbox{from}\ \mathcal{L}=(-\infty,\widehat\lambda_1)\ \mbox{into}\ C^{1}(\overline\Omega)\ \mbox{is nondecreasing}.
\end{align*}

Next, we establish the left continuity of this map. To this end, let $\lambda_n\rightarrow\lambda^-$ with $\lambda\in\mathcal{L}=(-\infty,\widehat\lambda_1)$. Evidently, $\{\bar u_{\lambda_n}\}_{n\geq1}\subseteq H^1(\Omega)$ is bounded and increasing. Therefore we may assume that
\begin{eqnarray}\label{32}
\bar u_{\lambda_n}\xrightarrow{w}\hat u_\lambda\ \mbox{in}\ H^1(\Omega)\ \mbox{and}\ \bar u_{\lambda_n}\rightarrow\hat u_\lambda\ \mbox{in}\ L^2(\Omega)\ \mbox{and in}\ L^2(\partial\Omega).
\end{eqnarray}
We have
\begin{align}\label{33}
& \langle A(\bar u_{\lambda_n}),h\rangle+\int_{\Omega}\xi(z)\bar u_{\lambda_n}h\, dz+\int_{\partial\Omega}\beta(z)\bar u_{\lambda_n}h\, d\sigma=
\int_\Omega\left[\lambda_n\bar u_{\lambda_n}+f(z,\bar u_{\lambda_n})\right]h\, dz \\ \vspace{0.3cm}
& \hspace{8cm}\ \mbox{for all}\ h\in H^1(\Omega),\ \mbox{all}\ n\in\NN.\nonumber
\end{align}
In \eqref{33} we pass to the limit as $n\rightarrow\infty$ and use \eqref{32}. We obtain
\begin{align}\label{34}
& \langle A(\hat u_{\lambda}),h\rangle+\int_{\Omega}\xi(z)\hat u_{\lambda}h\, dz+\int_{\partial\Omega}\beta(z)\hat u_{\lambda}h\, d\sigma=
\int_\Omega\left[\lambda\hat u_{\lambda}+f(z,\hat u_{\lambda})\right]h\, dz \\ \vspace{0.3cm}
& \hspace{9cm}\ \mbox{for all}\ h\in H^1(\Omega).\nonumber
\end{align}

Set $B=\{\lambda_n\}_{n\geq1}$ and let $\hat{c}_4\geq c_4(\bar\lambda)$ for all $\bar\lambda\in B$ (recall that $\lambda\mapsto c_4(\lambda)$ is bounded on bounded sets of $\RR$). Let $\bar u\in\textrm{int}\, C_+$ be the unique positive solution of the following semilinear Robin problem
\begin{align*}&\left\{\begin{array}{ll}
-\Delta u(z)+\xi(z)u(z)=c_1 u(z)^{q-1}-\hat c_4 u(z)^{\tau-1}\ \ \mbox{in}\ \Omega,\\ [0.3cm]
\displaystyle\frac{\partial u}{\partial n}+\beta(z)u=0\ \ \mbox{on}\ \partial\Omega.
\end{array}\right\}\\
&\hspace{6cm}(\mbox{see Proposition \ref{Proposition 7}}).
\end{align*}
We know that $\bar u\leq\bar u_{\lambda_n}\ \mbox{for all}\ n\in\NN$ (see the remark following Proposition \ref{Proposition 7}). Hence
\begin{align*}
& \bar u\leq\hat{u}_\lambda,\\ \vspace{0.3cm}
\Rightarrow\, & \hat u_{\lambda}\in S(\lambda)\subseteq\textrm{int}\, C_{+}.
\end{align*}
We claim that $\hat u_\lambda=\bar u_\lambda$. If this is not true, then we can find $z_{0}\in\overline\Omega$ such that
\begin{eqnarray}\label{35}
\bar{u}_\lambda(z_0)<\hat u_\lambda(z_0).
\end{eqnarray}
From Wang \cite{14}, we know that we can find $M_1>0$ and $\alpha\in(0,1)$ such that
\begin{eqnarray}\label{36}
\bar{u}_{\lambda_n}\in C^{1,\alpha}(\overline\Omega)\ \mbox{and}\ \|\bar u_{\lambda_n}\|_{C^1(\overline\Omega)}\leq M_1\ \mbox{for all}\ n\in\NN.
\end{eqnarray}
Exploiting the compact embedding of $C^{1,\alpha}(\overline\Omega)$ into $C^1(\overline\Omega)$ and using \eqref{32}, we obtain
\begin{align}\label{37}
& \bar u_{\lambda_n}\rightarrow\hat u_\lambda\ \mbox{in}\ C^1(\overline\Omega),\\ \vspace{0.3cm}
\Rightarrow\, & \bar u_{\lambda}(z_0)<\bar u_{\lambda_n}(z_{0})\ \mbox{for all}\, n\geq n_{0}\nonumber
\end{align}
a contradiction to the monotonicity of $\lambda\mapsto\bar u_\lambda$ (recall that $\lambda_n<\lambda$ for all $n\in\NN$). So $\hat{u}_\lambda=\bar u_{\lambda}$ and this proves the left continuity of $\lambda\mapsto\bar{u}_\lambda$ from $\mathcal{L}=(-\infty,\widehat\lambda_1)$ into $C^1(\overline\Omega)$ (see \eqref{37}).
\end{proof}

We can improve the monotonicity of the map $\lambda\mapsto\bar u_\lambda$ provided that we strengthen the conditions on $f(z,\cdot)$.
\begin{enumerate}
\item [$H_{2}$:] $f:\Omega\times\RR\rightarrow\RR$ is a Carath\'{e}odory function such that  $f(z,0)=0$ for a.a. $z\in\Omega$, hypotheses $H_2$(i), (ii), (iii) are the same as hypotheses $H_1$(i), (ii), (iii) and
\begin{enumerate}
\item [(iv)] for every $\rho>0$, there exists $\hat\xi_\rho>0$ such that  for a.a. $z\in\Omega$, the function
$$x\mapsto f(z,x)+\hat\xi_\rho x$$
is nondecreasing on $[0,\rho]$.
\end{enumerate}
\end{enumerate}

\vspace*{4pt}\noindent\textbf{Remark.} The new condition on $f(z,\cdot)$ is satisfied, if for a.a. $z\in\Omega$, $f(z,\cdot)$ is differentiable and $z\mapsto f_x'(z,\cdot)$ is locally $L^{\infty}(\Omega)$-bounded (just use the mean value theorem).

\vspace*{4pt}\noindent\textbf{Examples.} The following functions satisfy hypotheses $H_2$. As before, for the sake of simplicity, we drop the $z$-dependence
$$f_{1}(x)=x^{q-1}\ \mbox{for all}\ x\geq0\ \mbox{with}\ 1<q<2,$$
$$
f_{2}(x)=\left\{\begin{array}{ll}
x^{q-1} & \mbox{if}\ x\in[0,1]\\ [0.3cm]
2x^{\tau-1}-x^{s-1} & \mbox{if}\ 1<x
\end{array}\right. \mbox{with}\ 1<q,\tau,s<2,\; s<\tau,
$$
$$
f_{3}(x)=\left\{\begin{array}{ll}
x^{q-1}-x^{s-1} & \mbox{if}\ x\in[0,1]\\ [0.3cm]
x^{\tau-1}\ln x & \mbox{if}\ 1<x
\end{array}\right. \mbox{with}\ 1<q,s,\tau<2,\; q<s,
$$
$$
f_{4}(x)=\left\{\begin{array}{ll}
\ln(x^{q-1}+1) & \mbox{if}\ x\in[0,1]\\ [0.3cm]
x^{\tau-1}-x^{s-1}+c & \mbox{if}\ 1<x
\end{array}\right. \mbox{with}\ 1<q,\tau,s<2,\; s<\tau,\; c=\ln2.
$$

\begin{proposition}\label{Proposition 11}
If hypotheses $H(\xi)'$, $H(\beta)$, $H_2$ hold, then the map $\lambda\mapsto\bar u_\lambda$ from $\mathcal{L}=(-\infty,\widehat\lambda_1)$ into $C^1(\overline\Omega)$ is strictly increasing (that is, if $\lambda<\eta<\widehat\lambda_1$, then $\bar u_\eta-\bar u_\lambda\in\textrm{int}\, C_+$).
\end{proposition}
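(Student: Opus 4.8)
The plan is to upgrade the nondecreasing monotonicity already obtained in Proposition \ref{Proposition 10} to strict monotonicity in the order of $\textrm{int}\, C_+$, with the one-sided condition $H_2$(iv) serving as the decisive new ingredient. Fix $\lambda<\eta<\widehat\lambda_1$. By Proposition \ref{Proposition 10} the corresponding minimal positive solutions satisfy $0\le\bar u_\lambda\le\bar u_\eta$, and by Proposition \ref{Proposition 4} both lie in $\textrm{int}\, C_+$; in particular $d:=\bar u_\eta-\bar u_\lambda\in C^1(\overline\Omega)$ with $d\ge0$. The goal is to show $d\in\textrm{int}\, C_+$.

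First I would record the two equations solved by $\bar u_\eta$ and $\bar u_\lambda$ and set $\rho=\|\bar u_\eta\|_\infty$. Invoking $H_2$(iv) I obtain $\hat\xi_\rho>0$ for which $x\mapsto f(z,x)+\hat\xi_\rho x$ is nondecreasing on $[0,\rho]$ for a.a. $z\in\Omega$. Adding $\hat\xi_\rho\bar u_\eta$ and $\hat\xi_\rho\bar u_\lambda$ to the respective equations and subtracting, I use the identity $\eta\bar u_\eta-\lambda\bar u_\lambda=(\eta-\lambda)\bar u_\eta+\lambda d$. Since $0\le\bar u_\lambda\le\bar u_\eta\le\rho$, the chosen monotonicity forces $f(z,\bar u_\eta)+\hat\xi_\rho\bar u_\eta-f(z,\bar u_\lambda)-\hat\xi_\rho\bar u_\lambda\ge0$, so the difference of the two equations yields
$$-\Delta d+(\xi(z)+\hat\xi_\rho-\lambda)d\ge(\eta-\lambda)\bar u_\eta>0\quad\mbox{for a.a.}\ z\in\Omega.$$
This already shows $d\not\equiv0$, since otherwise the left-hand side would vanish while the right-hand side is strictly positive.

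To close, I would pass to the form required by the maximum principle. Discarding the nonnegative quantity $(\eta-\lambda)\bar u_\eta$ and using $H(\xi)'$ (so that $\xi(z)\le\|\xi^+\|_\infty$) together with $d\ge0$ gives $\Delta d\le(\|\xi^+\|_\infty+\hat\xi_\rho-\lambda)d\le\hat c_\rho\,d$, where $\hat c_\rho=\max\{\|\xi^+\|_\infty+\hat\xi_\rho-\lambda,0\}\ge0$. Subtracting the two Robin conditions shows that $d$ satisfies $\frac{\partial d}{\partial n}+\beta(z)d=0$ on $\partial\Omega$ with $\beta\ge0$. Hence the strong maximum principle (see \cite[p. 738]{6}) applies to $d\ge0$, $d\neq0$ and yields $\bar u_\eta-\bar u_\lambda=d\in\textrm{int}\, C_+$, as claimed.

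The main obstacle is organizing the computation so that the comparison principle is genuinely applicable: the coefficient $\xi+\hat\xi_\rho-\lambda$ need not be nonnegative (since $\lambda$ can be close to $\widehat\lambda_1$), so one must exploit $d\ge0$ to absorb it into the nonnegative constant $\hat c_\rho$, while \emph{retaining} the strict term $(\eta-\lambda)\bar u_\eta$. It is precisely this strict gap term, unavailable under $H_1$ alone, that promotes the mere inequality $\bar u_\lambda\le\bar u_\eta$ to membership in the interior of the cone; the monotonicity hypothesis $H_2$(iv) enters only to guarantee the favorable sign of the nonlinear difference on the range $[0,\rho]$.
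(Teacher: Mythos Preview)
Your proof is correct and follows essentially the same route as the paper: use Proposition \ref{Proposition 10} for $\bar u_\lambda\le\bar u_\eta$, invoke $H_2$(iv) with $\rho=\|\bar u_\eta\|_\infty$ to control the nonlinear difference, exploit $\lambda<\eta$ and $\bar u_\eta\in\textrm{int}\,C_+$ for the strict gap, bound $\xi$ by $\|\xi^+\|_\infty$ via $H(\xi)'$, and conclude with the strong maximum principle. The only cosmetic difference is bookkeeping: you move the term $\lambda d$ to the left and then take $\hat c_\rho=\max\{\|\xi^+\|_\infty+\hat\xi_\rho-\lambda,0\}$, whereas the paper keeps the coefficient as $\|\xi^+\|_\infty+\hat\xi_\rho$ (without the $-\lambda$) by comparing $-\Delta\bar u_\lambda+(\xi+\hat\xi_\rho)\bar u_\lambda$ and $-\Delta\bar u_\eta+(\xi+\hat\xi_\rho)\bar u_\eta$ directly; both lead to the same inequality $\Delta d\le Cd$ with $C\ge0$ and $d\ge0$, $d\not\equiv0$.
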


\begin{proof}
Let $\lambda<\eta<\widehat\lambda_1$ and let $\bar u_\lambda\in S(\lambda)\subseteq\textrm{int}\, C_+$, $\bar u_\eta\in S(\eta)\subseteq\textrm{int}\, C_+$ be the corresponding minimal positive solutions of problems ($P_\lambda$) and ($P_\eta$), respectively. By Proposition \ref{Proposition 10} we know that
\begin{eqnarray}\label{38}
\bar u_\lambda\leq\bar u_\eta.
\end{eqnarray}
Let $\rho=\|\bar u_\eta\|_\infty$ and let $\hat\xi_\rho>0$ be as postulated by hypothesis $H_2$(iv). We have
\begin{align*}
& -\Delta\bar u_\lambda(z)+(\xi(z)+\hat{\xi}_\rho)\bar u_\lambda(z)\\ \vspace{0.3cm}
& {}\hspace{0.3cm} = \lambda\bar u_\lambda(z)+f(z,\bar u_\lambda(z))+\hat\xi_\rho\bar u_\lambda(z)\\ \vspace{0.3cm}
& {}\hspace{0.3cm} <\eta\bar u_\eta(z)+f(z,\bar u_\eta(z))+\hat\xi_\rho\bar u_\eta(z)\\ \vspace{0.3cm}
& \hspace{1cm}(\mbox{see \eqref{38}, hypothesis}\ H_2\mbox{(iv) and recall}\ \lambda<\eta)\\ \vspace{0.3cm}
& {}\hspace{0.3cm} =-\Delta\bar u_\eta(z)+(\xi(z)+\hat\xi_\rho)\bar u_\eta(z)\ \mbox{for a.a.}\ z\in\Omega,\\ \vspace{0.3cm}
\Rightarrow\, & \Delta(\bar u_\eta-\bar u_\lambda)(z)\leq(\|\xi^+\|_\infty+\hat\xi_\rho)(\bar u_\eta-\bar u_\lambda)(z)\ \mbox{for a.a.}\ z\in\Omega\\ \vspace{0.3cm}
& \hspace{1cm}(\mbox{see hypothesis}\ H(\xi)'),\\ \vspace{0.3cm}
\Rightarrow\, & \bar u_\eta-\bar u_\lambda\in\textrm{int}\, C_+\\ \vspace{0.3cm}
& \hspace{1cm}(\mbox{by the strong maximum principle, see \cite[p. 738]{6}}).
\end{align*}
This proves the strict monotonicity of $\lambda\mapsto\bar u_\lambda$.
\end{proof}

In fact, under a monotonicity restriction on the quotient $\frac{f(z,x)}{x}$, we can conclude that for all $\lambda\in\mathcal{L}=(-\infty,\widehat\lambda_1)$ problem ($P_\lambda$) admits a unique positive solution.

Hence the new conditions on the perturbation $f(z,x)$ are the following:
\begin{enumerate}
\item [$H_{3}$:] $f:\Omega\times\RR\rightarrow\RR$ is a Carath\'{e}odory function such that  $f(z,0)=0$ for a.a. $z\in\Omega$, hypotheses $H_3$(i), (ii), (iii) are the same as hypotheses $H_1$(i), (ii), (iii) and
\begin{enumerate}
\item [(iv)] for a.a. $z\in\Omega$, $x\mapsto\frac{f(z,x)}{x}$ is strictly decreasing on $(0,+\infty)$.
\end{enumerate}
\end{enumerate}

\vspace*{4pt}\noindent
\textbf{Examples.} The following functions satisfy the new conditions. Again, for the sake of simplicity we drop the $z$-dependence:
$$f_{1}(x)=x^{q-1}\ \mbox{for all}\ x\geq0\ \mbox{with}\ 1<q<2,$$
$$
f_{2}(x)=\left\{\begin{array}{ll}
x^{q-1} & \mbox{if}\ x\in[0,1]\\ [0.3cm]
x^{\tau-1} & \mbox{if}\ 1<x
\end{array}\right. \mbox{with}\ 1<q,\, \tau<q.
$$

\begin{proposition}\label{Proposition 12}
If hypotheses $H(\xi)'$, $H(\beta)$, $H_3$ hold and $\lambda\in\mathcal{L}=(-\infty,\widehat\lambda_1)$, then $S(\lambda)$ is a singleton, that is, $S(\lambda)=\{\bar u_\lambda\}$ and $\lambda\mapsto\bar u_\lambda$ is nondecreasing and continuous from $(-\infty,\widehat{\lambda}_1)$ into $C^{1}(\overline\Omega)$.
\end{proposition}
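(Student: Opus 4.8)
The plan is to establish the two assertions in turn: first that for each fixed $\lambda<\widehat\lambda_1$ the set $S(\lambda)$ contains exactly one element, and then that the resulting single-valued map $\lambda\mapsto\bar u_\lambda$ is continuous. For the second assertion only right continuity needs to be proved, since Proposition \ref{Proposition 10} already supplies the nondecreasing monotonicity and the left continuity. The uniqueness is the genuinely new content, and I would obtain it by recycling the symmetric Picone identity argument from the Remark following Proposition \ref{Proposition 7}, this time exploiting the strict monotonicity hypothesis $H_3$(iv).

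Concretely, suppose $u,v\in S(\lambda)$. By Proposition \ref{Proposition 4} both lie in $\textrm{int}\,C_+$, hence are bounded below away from zero on $\overline\Omega$, so that $v^2/u$ and $u^2/v$ belong to $C^1(\overline\Omega)\subseteq H^1(\Omega)$ and are admissible test functions. Testing the equation satisfied by $u$ against $u-\frac{v^2}{u}$ and the equation satisfied by $v$ against $v-\frac{u^2}{v}$, applying Green's identity together with the Robin condition exactly as in the derivation of \eqref{19}--\eqref{21}, and then adding the two resulting identities, I expect the gradient contributions $\|Du\|_2^2-\|Dv\|_2^2$, the potential contributions $\int_\Omega\xi(u^2-v^2)\,dz$, the boundary contributions $\int_{\partial\Omega}\beta(u^2-v^2)\,d\sigma$, and the parametric contributions $\lambda(u^2-v^2)$ to all cancel. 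What remains is the relation
\begin{align*}
0\leq\int_\Omega\bigl[R(v,u)+R(u,v)\bigr]\,dz=\int_\Omega\left[\frac{f(z,u)}{u}-\frac{f(z,v)}{v}\right]\bigl(u^2-v^2\bigr)\,dz,
\end{align*}
where the left inequality is Picone's identity. Since $H_3$(iv) makes $x\mapsto\frac{f(z,x)}{x}$ strictly decreasing on $(0,+\infty)$, the integrand on the right is nonpositive and is strictly negative wherever $u(z)\neq v(z)$; the two bounds are therefore compatible only if $u=v$ a.e. in $\Omega$, giving $S(\lambda)=\{\bar u_\lambda\}$.

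For right continuity I would fix $\lambda\in\mathcal{L}$ and take $\lambda_n\downarrow\lambda$ with $\lambda_n>\lambda$. By the monotonicity of Proposition \ref{Proposition 10} the sequence $\{\bar u_{\lambda_n}\}_{n\geq1}$ is decreasing, and, applying the Remark following Proposition \ref{Proposition 7} to the bounded set $B=\{\lambda_n\}$ with a common constant $\hat c_4$ together with Proposition \ref{Proposition 8}, it is bounded below by a fixed $\bar u\in\textrm{int}\,C_+$. The regularity estimate \eqref{36} from Wang \cite{14} then bounds $\{\bar u_{\lambda_n}\}$ in $C^{1,\alpha}(\overline\Omega)$, so the compact embedding $C^{1,\alpha}(\overline\Omega)\hookrightarrow C^1(\overline\Omega)$ lets me pass to a subsequence converging in $C^1(\overline\Omega)$ to some $\hat u\geq\bar u>0$. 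Passing to the limit in the weak formulation shows $\hat u\in S(\lambda)$, whence the uniqueness already proved forces $\hat u=\bar u_\lambda$. As every subsequential $C^1$-limit must equal $\bar u_\lambda$, the whole sequence converges, $\bar u_{\lambda_n}\to\bar u_\lambda$ in $C^1(\overline\Omega)$, which combined with Proposition \ref{Proposition 10} yields full continuity.

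I expect the main obstacle to be the Picone computation in the uniqueness step: one must confirm that $v^2/u$ and $u^2/v$ are legitimate $H^1$-test functions (ensured by $u,v\in\textrm{int}\,C_+$) and, more delicately, verify that upon summing the two integral identities all gradient, potential, boundary, and $\lambda$-terms cancel exactly, leaving only the quotient difference weighted by $u^2-v^2$. Once that cancellation is secured, hypothesis $H_3$(iv) closes the argument at once. The continuity half is comparatively routine, the sole point requiring care being the uniform lower bound $\bar u$ that prevents the monotone limit from collapsing to $0$.
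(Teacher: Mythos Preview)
Your proposal is correct. For uniqueness, however, the paper takes a more elementary route than the Picone identity you adapt from the Remark after Proposition \ref{Proposition 7}: it simply multiplies the equation for $\bar u_\lambda$ by $\bar v_\lambda$, multiplies the equation for $\bar v_\lambda$ by $\bar u_\lambda$, integrates both using Green's identity, and subtracts. All gradient, potential, boundary, and $\lambda$ terms cancel immediately, leaving
\[
\int_\Omega\left[\frac{f(z,\bar u_\lambda)}{\bar u_\lambda}-\frac{f(z,\bar v_\lambda)}{\bar v_\lambda}\right]\bar u_\lambda\bar v_\lambda\,dz=0,
\]
whence $H_3$(iv) and $\bar u_\lambda\bar v_\lambda>0$ force $\bar u_\lambda=\bar v_\lambda$. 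This avoids the quotients $v^2/u$, $u^2/v$ and the Picone remainder $R(\cdot,\cdot)$ entirely; the verification of cancellation that you flag as the main obstacle is automatic in the paper's symmetric-testing version. Your Picone argument is also valid and reaches the same conclusion via the weight $u^2-v^2$ instead of $uv$, but it is more work for no gain here. For continuity, the paper simply invokes uniqueness together with Proposition \ref{Proposition 10} without spelling out right continuity; your explicit argument for the right limit, using the uniform lower bound and the $C^{1,\alpha}$ compactness, is a legitimate and more detailed way to close that gap.
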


\begin{proof}
The nonemptiness of $S(\lambda)$ follows from Proposition \ref{Proposition 6}. Suppose that $\bar u_{\lambda},\bar v_\lambda\in S(\lambda)\subseteq\textrm{int}\, C_{+}$. We have
\begin{eqnarray}\label{39}
-\Delta\bar u_\lambda(z)+\xi(z)\bar{u}_\lambda(z)=\lambda\bar u_\lambda(z)+f(z,\bar u_\lambda(z))\ \mbox{for a.a.}\ z\in\Omega,
\end{eqnarray}
\begin{eqnarray}\label{40}
-\Delta\bar v_\lambda(z)+\xi(z)\bar v_\lambda(z)=\lambda\bar v_\lambda(z)+f(z,\bar v_\lambda(z))\ \mbox{for a.a.}\ z\in\Omega.
\end{eqnarray}
We multiply \eqref{39} with $\bar v_\lambda(z)$ and \eqref{40} with $\bar u_\lambda(z)$, then integrate both equations over $\Omega$ and use Green's identity. We obtain
\begin{align}\label{41}
&\int_\Omega(D\bar u_\lambda,D\bar v_\lambda)_{\RR^N}dz+\int_{\Omega}\limits\xi(z)\bar u_\lambda\bar v_\lambda dz+\int_{\partial\Omega}\limits\beta(z)\bar u_\lambda\bar v_\lambda d\sigma\nonumber\\=&\lambda\int_\Omega\limits\bar u_\lambda\bar v_\lambda dz+\int_\Omega \limits f(z,\bar u_\lambda)\bar v_\lambda dz
\end{align}
\begin{align}\label{42}
&\int_\Omega\limits(D\bar v_\lambda,D\bar u_\lambda)_{\RR^N}dz+\int_{\Omega}\limits\xi(z)\bar v_\lambda\bar u_\lambda dz+\int_{\partial\Omega}\limits\beta(z)\bar v_\lambda\bar u_\lambda d\sigma\nonumber\\=&\lambda\int_\Omega\limits\bar v_\lambda\bar u_\lambda dz+\int_\Omega \limits f(z,\bar v_\lambda)\bar u_\lambda dz.
\end{align}
We subtract \eqref{42} from \eqref{41} and obtain
\begin{align*}
& \int_\Omega\left[f(z,\bar u_\lambda)\bar v_\lambda-f(z,\bar v_\lambda)\bar u_\lambda\right]dz=0,\\ \vspace{0.3cm}
\Rightarrow\, & \int_{\Omega}\left[\frac{f(z,\bar u_\lambda)}{\bar u_\lambda}-\frac{f(z,\bar v_\lambda)}{\bar v_\lambda}\right]\bar u_\lambda\bar v_\lambda dz=0\ (\mbox{recall}\ \bar u_\lambda,\bar v_\lambda\in\textrm{int}\, C_{+})\\ \vspace{0.3cm}
\Rightarrow\, & \bar u_\lambda=\bar v_\lambda\ (\mbox{see hypothesis}\ H_3\mbox{(iv)}).
\end{align*}
This proves the uniqueness of the positive solution of problem ($P_\lambda$). The uniqueness of this positive solution, together with Proposition \ref{Proposition 10}, imply that the map $\lambda\mapsto\bar u_\lambda$ from $\mathcal{L}=(-\infty,\widehat\lambda_1)$ into $C^1(\overline\Omega)$ is nondecreasing and continuous.
\end{proof}

As before, by strengthening the conditions on the perturbation $f(z,\cdot)$ we can improve the monotonicity of the map $\lambda\rightarrow\bar u_\lambda$.

The new condition on $f(z,x)$ are the following:
\begin{enumerate}
\item [$H_{4}$:] $f:\Omega\times\RR\rightarrow\RR$ is a Carath\'{e}odory function such that  $f(z,0)=0$ for a.a. $z\in\Omega$, hypotheses $H_4$(i), (ii), (iii), (iv) are the same as the corresponding hypotheses $H_3$(i), (ii), (iii), (iv) and
\begin{enumerate}
\item [(v)] for every $\rho>0$, there exists $\hat\xi_\rho>0$ such that  for a.a. $z\in\Omega$ the function $x\mapsto f(z,x)+\hat\xi_\rho x$ is nondecreasing on $[0,\rho]$.
\end{enumerate}
\end{enumerate}

\vspace*{4pt}\noindent
\textbf{Remark.} The examples after hypotheses $H_3$, also satisfy hypotheses $H_4$. Then Propositions \ref{Proposition 11} and \ref{Proposition 12} imply the following result.

\begin{proposition}\label{Proposition 13}
If hypotheses $H(\xi)'$, $H(\beta)$, $H_4$ hold and $\lambda\in\mathcal{L}=(-\infty,\widehat\lambda_1)$, then the set $S(\lambda)$ is a singleton $\{\bar u_\lambda\}$ and the map $\lambda\mapsto \bar u_\lambda$ from $\mathcal{L}$ into $C^1(\overline\Omega)$ is strictly increasing (that is, if $\lambda<\eta<\widehat\lambda_1$, then $\bar u_\eta-\bar u_\lambda\in\textrm{int}\, C_+$).
\end{proposition}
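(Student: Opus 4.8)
The plan is to recognize that hypotheses $H_4$ simultaneously encode everything needed for both the uniqueness result (Proposition \ref{Proposition 12}) and the strict monotonicity result (Proposition \ref{Proposition 11}), so that the conclusion follows by merely combining the two, exactly as anticipated in the remark preceding the statement. First I would record the inclusions between the hypothesis sets. By definition $H_4$(i)--(iv) coincide with $H_3$(i)--(iv); since $H_3$ consists of $H_1$(i)--(iii) together with the strictly-decreasing-quotient condition $H_3$(iv), hypotheses $H(\xi)'$, $H(\beta)$, $H_3$ are in force. On the other hand $H_4$(v) is verbatim the local monotonicity condition $H_2$(iv), and $H_4$(i)--(iii)$\,=H_1$(i)--(iii)$\,=H_2$(i)--(iii), so hypotheses $H(\xi)'$, $H(\beta)$, $H_2$ hold as well.

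With these two observations, I would fix $\lambda\in\mathcal{L}=(-\infty,\widehat\lambda_1)$ and apply Proposition \ref{Proposition 12}, whose hypotheses are now met, to conclude that $S(\lambda)$ is a singleton, say $S(\lambda)=\{\bar u_\lambda\}$ with $\bar u_\lambda\in\textrm{int}\, C_+$. In particular the unique positive solution coincides with the minimal positive solution $\bar u_\lambda$ produced in Proposition \ref{Proposition 9}, so the notation $\bar u_\lambda$ carried over from Propositions \ref{Proposition 9}--\ref{Proposition 12} is unambiguous.

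Next, to upgrade the monotonicity, I would invoke Proposition \ref{Proposition 11}: since hypotheses $H(\xi)'$, $H(\beta)$, $H_2$ hold, the map $\lambda\mapsto\bar u_\lambda$ from $\mathcal{L}$ into $C^1(\overline\Omega)$ is strictly increasing, that is, for $\lambda<\eta<\widehat\lambda_1$ one has $\bar u_\eta-\bar u_\lambda\in\textrm{int}\, C_+$. Because $\bar u_\lambda$ and $\bar u_\eta$ are now the unique (hence minimal) positive solutions of $(P_\lambda)$ and $(P_\eta)$ respectively, the strict ordering established for minimal solutions in Proposition \ref{Proposition 11} is precisely the asserted strict ordering of the full solution sets, which completes the argument.

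The only point requiring care — and it is a bookkeeping rather than an analytic obstacle — is to verify that $H_4$ genuinely dominates \emph{both} $H_2$ and $H_3$ (not just one of them), and that the object named $\bar u_\lambda$ in the strict-monotonicity statement of Proposition \ref{Proposition 11} (the minimal positive solution) is the same as the object named $\bar u_\lambda$ in the uniqueness statement of Proposition \ref{Proposition 12} (the unique positive solution). Under $H_4$ the set $S(\lambda)$ is a singleton, so minimal $=$ unique and the two propositions are fully compatible; no additional estimates are needed.
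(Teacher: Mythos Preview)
Your proposal is correct and matches the paper's own treatment exactly: the paper gives no separate proof at all, merely noting in the remark preceding the statement that Propositions \ref{Proposition 11} and \ref{Proposition 12} together imply the result. Your careful check that $H_4$ simultaneously implies $H_2$ and $H_3$, and that the minimal and unique solutions coincide so the notation $\bar u_\lambda$ is consistent, is precisely the bookkeeping the paper leaves implicit.
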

Summarizing the situation for problem ($P_\lambda$) when the perturbation term $f(z,\cdot)$ is sublinear, we can state the following theorem.
\begin{theorem}\label{Theorem 14}
\begin{enumerate}
\item [(a)] If hypotheses $H(\xi)'$, $H(\beta)$, $H_1$ hold, then for every $\lambda\geq\widehat\lambda_1$ we have $S(\lambda)=\emptyset$, while for every $\lambda<\widehat\lambda_1$, $S(\lambda)\neq\emptyset$, $S(\lambda)\subseteq\textrm{int}\, C_+$, problem ($P_\lambda$) admits a smallest positive solution $\bar u_\lambda\in\textrm{int}\, C_+$ and the map $\lambda\mapsto\bar u_\lambda$ from $\mathcal{L}=(-\infty,\widehat\lambda_1)$ into $C^1(\overline\Omega)$ is nondecreasing (that is, $\lambda<\eta<\lambda_1$ implies $\bar u_\lambda\leq\bar u_\eta$) and left continuous.
\item [(b)] If hypotheses $H(\xi)'$, $H(\beta)$, $H_2$ hold, then the map $\lambda\mapsto\bar u_\lambda$ from $\mathcal{L}=(-\infty,\widehat\lambda_1)$ into $C^1(\overline\Omega)$ is strictly increasing (that is, $\lambda<\eta<\widehat\lambda_1$ implies $\bar u_\eta-\bar u_\lambda\in\textrm{int}\, C_{+}$).
\item [(c)] If hypotheses $H(\xi)'$, $H(\beta)$, $H_3$ hold and $\lambda\in\mathcal{L}=(-\infty,\widehat\lambda_1)$, then $S(\lambda)$ is a singleton $\{\bar u_\lambda\}$ ($\bar u_\lambda\in\textrm{int}\, C_+$) and the map $\lambda\mapsto\bar u_\lambda$ from $\mathcal{L}=(-\infty,\widehat\lambda_1)$ into $C^1(\overline\Omega)$ is nondecreasing and continuous.
\item [(d)] If hypotheses $H(\xi)'$, $H(\beta)$, $H_4$ hold, then the map $\lambda\mapsto\bar u_\lambda$ is strictly increasing from $\mathcal{L}=(-\infty,\widehat\lambda_1)$ into $C^1(\overline\Omega)$.
\end{enumerate}
\end{theorem}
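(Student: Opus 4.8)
The plan is to assemble the theorem directly from the propositions already established in this section, since every individual assertion has been proved in isolation; the statement is essentially a consolidation of the preceding results. I would organize the argument according to the four parts (a)--(d), matching each clause to the result that supplies it.

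For part (a), the nonexistence statement $S(\lambda)=\emptyset$ for $\lambda\geq\widehat\lambda_1$ is precisely Proposition \ref{Proposition 5}. The existence statement $S(\lambda)\neq\emptyset$ for $\lambda<\widehat\lambda_1$ is Proposition \ref{Proposition 6}, while the inclusion $S(\lambda)\subseteq\textrm{int}\,C_+$ follows from Proposition \ref{Proposition 4} (which yields $\textrm{int}\,C_+$ precisely because we are working under the stronger potential hypothesis $H(\xi)'$ rather than $H(\xi)$). The existence of a smallest positive solution $\bar u_\lambda\in\textrm{int}\,C_+$ is Proposition \ref{Proposition 9}, and the fact that $\lambda\mapsto\bar u_\lambda$ is nondecreasing and left continuous is Proposition \ref{Proposition 10}.

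For parts (b)--(d) I would first record the elementary observation that each of the hypothesis sets $H_2$, $H_3$, $H_4$ contains $H_1$ as a special case, since in every instance clauses (i), (ii), (iii) coincide with those of $H_1$. Consequently the full conclusion of part (a) remains valid under each of these stronger hypotheses, and only the additional refinements need to be supplied. Part (b) is then exactly Proposition \ref{Proposition 11}, part (c) is Proposition \ref{Proposition 12} (which upgrades the monotonicity and left continuity of part (a) to a genuine uniqueness statement $S(\lambda)=\{\bar u_\lambda\}$ together with a nondecreasing and \emph{continuous} map), and part (d) is Proposition \ref{Proposition 13}.

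Since all the components are already in hand, there is no substantive analytic difficulty to overcome; the only point deserving attention is the purely bookkeeping verification that the hypotheses invoked by each cited proposition are genuinely implied by the hypotheses of the corresponding part of the theorem. In particular one must check that $H(\xi)'$ (and not merely $H(\xi)$) is in force throughout, so that the $\textrm{int}\,C_+$ conclusions are legitimate, and that the nesting of $H_1$ inside $H_2$, $H_3$, $H_4$ is what permits the existence, minimality, and regularity assertions of part (a) to be carried over silently into parts (b), (c) and (d).
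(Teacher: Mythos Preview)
Your proposal is correct and matches the paper's treatment: the paper presents Theorem \ref{Theorem 14} explicitly as a summary (``Summarizing the situation\ldots we can state the following theorem'') with no separate proof, and your decomposition into Propositions \ref{Proposition 4}--\ref{Proposition 13} is exactly the intended reading. Your bookkeeping remarks about $H(\xi)'$ versus $H(\xi)$ and the nesting $H_1\subseteq H_2,H_3,H_4$ are accurate and, if anything, make the dependencies more explicit than the paper does.
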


\section{The superlinear case}\label{sec4}

In this section, we investigate the case of a superlinear perturbation $f(z,\cdot)$. Now we cannot have uniqueness of the positive solution and in fact we show that the problem exhibits a kind of bifurcation phenomenon, that is, for $\lambda<\widehat\lambda_1$, problem ($P_\lambda$) has at least two positive solutions, while  for $\lambda\geq\widehat\lambda_1$, $S(\lambda)=\emptyset$. We stress that for the superlinearity of $f(z,\cdot)$ we do not use the Ambrosetti-Rabinowitz condition (AR-condition for short).

The hypotheses on the perturbation $f(z,x)$ are the following:
\begin{enumerate}
\item [$H_{5}$:] $f:\Omega\times\RR\rightarrow\RR$ is a Carath\'{e}odory function such that  for a.a. $z\in\Omega$, $f(z,0)=0$, $f(z,x)\geq0$ for all $x\geq0$, $f(z,x)>0$ for a.a. $z\in\Omega_0\subseteq\Omega$ with $|\Omega_{0}|_{N}>0$, all $x>0$ and
\begin{enumerate}
\item [(i)] $f(z,x)\leq a(z)(1+x^{r-1})$ for a.a. $z\in\Omega$, all $x\geq0$, with $a\in L^{\infty}(\Omega)_{+}$ and $2<r<2^{*}$;
\item [(ii)] if $F(z,x)=\int_0^x f(z,s)ds$, then
$$\lim_{x\rightarrow+\infty}\frac{F(z,x)}{x^2}=+\infty\ \mbox{uniformly for a.a.}\ z\in\Omega$$
and there exists $q\in\left(\max\left\{1,(r-2)\frac{N}{2}\right\},2^{*}\right)$ such that
$$0<\tilde\xi\leq\liminf_{x\rightarrow+\infty}\frac{f(z,x)x-2F(z,x)}{x^q}\ \mbox{uniformly for a.a.}\ z\in\Omega;$$
\item [(iii)] $\displaystyle\lim_{x\rightarrow0^{+}}\limits\frac{f(z,x)}{x}=0$ uniformly for a.a. $z\in\Omega$.
\end{enumerate}
\end{enumerate}

\vspace*{4pt}\noindent
\textbf{Remarks.} As in the \textit{sublinear} case, since we are looking for positive solutions and the above hypotheses concern the positive semiaxis $\RR_+=[0,+\infty)$,  we may assume without any loss of generality that $f(z,x)=0$ for a.a. $z\in\Omega$, all $x\leq0$. Hypothesis $H_{5}$(ii) implies that for a.a. $z\in\Omega$, $f(z,\cdot)$ is superlinear near $+\infty$. However, we do not assume the usual for superlinear problems AR-condition. Recall that the AR-condition (unilateral version since it is imposed only on the positive semiaxis), says that there exist $\tau>2$ and $M_2>0$ such that
\begin{eqnarray}\label{43}
0<\tau F(z,x)\leq f(z,x)x\ \mbox{for a.a.}\ z\in\Omega,\ \mbox{all}\ x\geq M_2
\end{eqnarray}
\begin{eqnarray}\label{44}
0<\essinf_{\Omega}F(\cdot,M_2)
\end{eqnarray}
(see Ambrosetti \& Rabinowitz \cite{2} and Mugnai \cite{10}). Integrating \eqref{43} and using \eqref{44}, we obtain
\begin{eqnarray}\label{45}
c_8 x^\tau\leq F(z,x)\ \mbox{for a.a.}\ z\in\Omega,\ \mbox{all}\ x\geq M_2,\ \mbox{some}\ c_{8}>0.
\end{eqnarray}
From \eqref{43} and \eqref{45} it follows that for a.a. $z\in\Omega$, $f(z,\cdot)$ has at least $(\tau-1)$-polynomial growth near $+\infty$. Our hypothesis is implied by the AR-condition. We may assume that $\tau>\max\left\{1,(r-2)\frac{N}{2}\right\}$. We have
\begin{align*}
\frac{f(z,x)x-2F(z,x)}{x^\tau} & =\frac{f(z,x)x-\tau F(z,x)}{x^\tau}+(\tau-2)\frac{F(z,x)}{x^\tau}\\ \vspace{0.3cm}
& \geq c_8(\tau-2)\ \mbox{for a.a.}\ z\in\Omega,\ \mbox{all}\ x\geq M_2\\ \vspace{0.3cm}
& \hspace{3cm}(\mbox{see \eqref{43} and \eqref{45}}).
\end{align*}
 Thus, hypothesis $H_5$(ii) is satisfied (recall that $\tau>2$). This more general superlinearity condition, incorporates in our framework superlinear nonlinearities with \textit{slower} growth near $+\infty$, which fail to satisfy the AR-condition (unilateral version).

\vspace*{4pt}\noindent\textbf{Examples.} The following functions satisfy hypotheses $H_5$. As in the previous examples, for the sake of simplicity we drop the $z$-dependence.
$$f_{1}(x)=x^{q-1}\ \mbox{for all}\ x\geq0,\ \mbox{with}\ 2<q<2^{*}$$
$$f_{2}(x)=x \ln(1+x)\ \mbox{for all}\ x\geq0.$$
Note that $f_1$ satisfies the unilateral AR-condition, but $f_2$ does not.

Using Proposition \ref{Proposition 4} and reasoning exactly as in the proof of Proposition \ref{Proposition 5}, we obtain the following result.

\begin{proposition}\label{Proposition 15}
If hypotheses $H(\xi)'$, $H(\beta)$, $H_{5}$ hold and $\lambda\geq\lambda_{1}$, then $S(\lambda)=\emptyset$.
\end{proposition}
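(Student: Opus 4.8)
The plan is to argue by contradiction and reproduce verbatim the Picone-identity computation from the proof of Proposition \ref{Proposition 5}, checking only that hypothesis $H_5$ still supplies the positivity of $f$ needed to close the argument. So I would suppose, for some $\lambda\geq\widehat\lambda_1$, that $S(\lambda)\neq\emptyset$, and pick $u\in S(\lambda)$. Since $H(\xi)'$ holds, Proposition \ref{Proposition 4} gives $u\in\textrm{int}\, C_+$; in particular $u(z)>0$ throughout $\overline\Omega$, so division by $u$ is legitimate and $u$ solves the equation pointwise a.e. together with the Robin boundary condition.

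Next, for an arbitrary $v\in\textrm{int}\, C_+$ I would form the Picone quantity
$$R(v,u)(z)=|Dv(z)|^{2}-\left(Du(z),D\left(\frac{v^{2}}{u}\right)(z)\right)_{\RR^{N}},$$
which is nonnegative a.e.\ by Picone's identity. Integrating over $\Omega$, applying Green's identity with the Robin condition satisfied by $u$, and substituting $-\Delta u=(\lambda-\xi)u+f(z,u)$ exactly as in the proof of Proposition \ref{Proposition 5}, I would reach
$$0\leq\int_\Omega R(v,u)\,dz=\gamma(v)-\lambda\|v\|_2^2-\int_\Omega f(z,u)\frac{v^2}{u}\,dz.$$

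The single point at which $H_5$ must be invoked in place of $H_1$ is the strict positivity of the last integral: under $H_1$ one has $f(z,x)>0$ for every $x>0$ and a.a.\ $z$, whereas $H_5$ guarantees positivity of $f(z,\cdot)$ on $(0,+\infty)$ only on a set $\Omega_0$ with $|\Omega_0|_N>0$. This remains enough, since $u\in\textrm{int}\, C_+$ and $v\in\textrm{int}\, C_+$ force $f(z,u(z))\frac{v(z)^2}{u(z)}>0$ for a.a.\ $z\in\Omega_0$ and $\geq0$ elsewhere, whence $\int_\Omega f(z,u)\frac{v^2}{u}\,dz>0$. Therefore $0<\gamma(v)-\lambda\|v\|_2^2$ for every $v\in\textrm{int}\, C_+$.

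Finally I would specialize to $v=\hat u_1\in\textrm{int}\, C_+$. Using the variational characterization \eqref{2} together with $\|\hat u_1\|_2=1$ gives $\gamma(\hat u_1)=\widehat\lambda_1$, so the inequality becomes $0<\widehat\lambda_1-\lambda$, contradicting $\lambda\geq\widehat\lambda_1$. Hence $S(\lambda)=\emptyset$. The only genuinely delicate step is the verification that the reaction integral stays strictly positive under the weaker positivity assumption in $H_5$; every other line is a direct transcription of the earlier Picone computation, which is precisely why the paper can assert the result ``reasoning exactly as in the proof of Proposition \ref{Proposition 5}.''
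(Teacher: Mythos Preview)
Your proposal is correct and follows exactly the approach the paper intends: it reproduces the Picone-identity computation of Proposition \ref{Proposition 5}, with the only adjustment being the observation that the weaker positivity in $H_5$ (strict positivity of $f(z,\cdot)$ only on $\Omega_0$ with $|\Omega_0|_N>0$) still forces $\int_\Omega f(z,u)\frac{v^2}{u}\,dz>0$. The paper itself gives no separate proof and simply refers back to Proposition \ref{Proposition 5}, so your write-up is precisely the unpacking of that reference.
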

So according to this proposition, we have $\mathcal{L}\subseteq(-\infty,\widehat\lambda_1)$. In fact we will show that equality holds.

\begin{proposition}\label{Proposition 16}
If hypotheses $H(\xi)'$, $H(\beta)$, $H_{5}$ hold, then $\mathcal{L}=(-\infty,\widehat\lambda_1)$.
\end{proposition}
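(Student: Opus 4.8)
Since Proposition \ref{Proposition 15} already gives $\mathcal{L}\subseteq(-\infty,\widehat\lambda_1)$, the plan is to prove the reverse inclusion by showing that $S(\lambda)\neq\emptyset$ for every fixed $\lambda<\widehat\lambda_1$. I would argue variationally, via the mountain pass theorem (Theorem \ref{Theorem 1}). As in the proof of Proposition \ref{Proposition 6}, I first truncate the reaction at zero, setting $g_\lambda(z,x)=0$ for $x\le0$ and $g_\lambda(z,x)=(\lambda+\mu)x+f(z,x)$ for $x>0$ (with $\mu>0$ as in \eqref{6}), and I work with the $C^1$-functional $\varphi_\lambda(u)=\frac12\gamma(u)+\frac\mu2\|u\|_2^2-\int_\Omega G_\lambda(z,u)\,dz$. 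Any nontrivial critical point of $\varphi_\lambda$ is nonnegative (test $\varphi_\lambda'(u)=0$ with $-u^-$ and invoke \eqref{6}), and arguing as in Proposition \ref{Proposition 6} it solves $(P_\lambda)$ and hence, by Proposition \ref{Proposition 4}, lies in $S(\lambda)\subseteq\textrm{int}\, C_+$. So it suffices to produce one nontrivial critical point.

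For the mountain pass geometry, near the origin the decomposition $u=u^+-u^-$ gives $\varphi_\lambda(u)=\frac12[\gamma(u^+)-\lambda\|u^+\|_2^2]+\frac12[\gamma(u^-)+\mu\|u^-\|_2^2]-\int_\Omega F(z,u^+)\,dz$. Applying \eqref{6} to the second bracket and Lemma \ref{Lemma 3} (with $\vartheta\equiv\lambda<\widehat\lambda_1$) to the first, together with the estimate $F(z,x)\le\frac\ep2 x^2+c_\ep x^r$ coming from $H_5$(i),(iii) and the Sobolev embedding, I obtain $\varphi_\lambda(u)\ge c\|u\|^2-c'\|u\|^r$ with $r>2$; hence there are $\rho>0$ and $m_\rho>0$ with $\inf_{\|u\|=\rho}\varphi_\lambda\ge m_\rho>0=\varphi_\lambda(0)$. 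In the direction $\hat u_1\in\textrm{int}\, C_+$, using $(t\hat u_1)^+=t\hat u_1$, $\|\hat u_1\|_2=1$ and $\gamma(\hat u_1)=\widehat\lambda_1$ (see \eqref{2}), I get $\varphi_\lambda(t\hat u_1)=\frac{t^2}{2}(\widehat\lambda_1-\lambda)-\int_\Omega F(z,t\hat u_1)\,dz$; the superlinearity $\lim_{x\to+\infty}F(z,x)/x^2=+\infty$ from $H_5$(ii), via Fatou's lemma, forces $\varphi_\lambda(t\hat u_1)\to-\infty$, so some $u_1=t_1\hat u_1$ has $\|u_1\|>\rho$ and $\varphi_\lambda(u_1)<0$. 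With $u_0=0$ this places us in Theorem \ref{Theorem 1}, and the critical value $c\ge m_\rho>0=\varphi_\lambda(0)$ yields a nontrivial critical point.

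The step I expect to be the main obstacle is verifying the C-condition, since we avoid the Ambrosetti--Rabinowitz condition and must instead exploit the weaker hypothesis $H_5$(ii). Given a Cerami sequence $\{u_n\}$, testing with $-u_n^-$ and using \eqref{6} first gives $u_n^-\to0$ in $H^1(\Omega)$, and $(1+\|u_n\|)\varphi_\lambda'(u_n)\to0$ gives $\langle\varphi_\lambda'(u_n),u_n\rangle\to0$, so $2\varphi_\lambda(u_n)-\langle\varphi_\lambda'(u_n),u_n\rangle=\int_\Omega[f(z,u_n^+)u_n^+-2F(z,u_n^+)]\,dz$ stays bounded. The liminf part of $H_5$(ii) gives $f(z,x)x-2F(z,x)\ge\tilde\xi x^q-c$, so $\|u_n^+\|_q^q$ is bounded. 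The crux is then the boundedness of $\{u_n\}$ in $H^1(\Omega)$: testing $\varphi_\lambda'(u_n)=o(1)$ with $u_n$ and applying Lemma \ref{Lemma 3} to $\gamma(u_n^+)-\lambda\|u_n^+\|_2^2$ and \eqref{6} to $\gamma(u_n^-)+\mu\|u_n^-\|_2^2$ bounds $c\|u_n\|^2$ above by $\int_\Omega f(z,u_n^+)u_n^+\,dz+o(1)\le c'(1+\|u_n^+\|_r^r)$ by $H_5$(i); interpolating $\|u_n^+\|_r$ between the bounded $\|u_n^+\|_q$ and the Sobolev-controlled $\|u_n^+\|_{2^*}$, the restriction $q>\max\{1,(r-2)\tfrac N2\}$ is precisely what forces the resulting power of $\|u_n\|$ below $2$, so dividing by $\|u_n\|^2$ and letting $\|u_n\|\to\infty$ yields a contradiction. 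Once $\{u_n\}$ is bounded, I pass to a weak limit $u$, test $\varphi_\lambda'(u_n)$ with $u_n-u$, and use the compactness of the trace map and of $H^1(\Omega)\hookrightarrow L^r(\Omega)$ to obtain $\|Du_n\|_2\to\|Du\|_2$; together with $u_n\to u$ in $L^2(\Omega)$ and the Kadec--Klee property \eqref{7}, this gives $u_n\to u$ in $H^1(\Omega)$, so $\varphi_\lambda$ satisfies the C-condition. The mountain pass theorem then produces a nontrivial $u_\lambda\in S(\lambda)$, whence $\lambda\in\mathcal{L}$ for every $\lambda<\widehat\lambda_1$ and therefore $\mathcal{L}=(-\infty,\widehat\lambda_1)$.
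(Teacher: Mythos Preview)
Your proposal is correct and follows essentially the same route as the paper: truncate at zero, verify mountain pass geometry via $H_5$(i),(iii) and Lemma~\ref{Lemma 3}, show $\varphi_\lambda(t\hat u_1)\to-\infty$ from $H_5$(ii), and establish the C-condition by first killing $u_n^-$, then combining energy and derivative to bound $\|u_n^+\|_q$, and finally interpolating between $L^q$ and $L^{2^*}$ using $q>(r-2)\tfrac{N}{2}$ to get $tr<2$. Two small points worth noting: first, the paper handles the case $N=2$ (where $2^*=+\infty$) separately by interpolating against a large but finite $L^\eta$, which your sketch omits; second, where the paper argues that one may assume $q\ge 2$ so as to add $\mu\|u_n^+\|_2^2$ and invoke \eqref{6}, you instead apply Lemma~\ref{Lemma 3} directly to $\gamma(u_n^+)-\lambda\|u_n^+\|_2^2$, which is a slightly cleaner way to reach the same inequality $c\|u_n\|^2\le c'(1+\|u_n^+\|^{tr})$.
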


\begin{proof}
We fix $\lambda\in(-\infty,\widehat\lambda_1)$ and consider the Carath\'{e}odory function
\begin{eqnarray}\label{46}
k_{\lambda}(z,x)=\left\{\begin{array}{ll}
0 & \mbox{if}\ x\leq0\\ [0.3cm]
\lambda x+f(z,x) & \mbox{if}\ 0<x.
\end{array}\right.
\end{eqnarray}
We set $K_\lambda(z,x)=\int_0^x k_\lambda(z,s)ds$ and consider the $C^1$-functional $\varphi_\lambda:H^1(\Omega)\rightarrow\RR$ defined by
$$\varphi_\lambda(u)=\frac{1}{2}\gamma(u)+\frac{\mu}{2}\|u^-\|_2^2-\int_\Omega K_\lambda(z,u)dz\ \mbox{for all}\ u\in H^1(\Omega).$$
Here, $\mu>0$ is as in \eqref{6}. Hypotheses $H_5$(i), (iii) imply that given $\varepsilon>0$, we can find $c_9=c_9(\varepsilon)>0$ such that
\begin{eqnarray}\label{47}
F(z,x)\leq\frac{\varepsilon}{2}x^2+c_9x^r\ \mbox{for a.a.}\ z\in\Omega,\ \mbox{all}\ x\geq0.
\end{eqnarray}
Choosing $\varepsilon\in(0,\widehat\lambda_1-\lambda)$ (recall $\lambda<\widehat\lambda_1$), for all $u\in H^1(\Omega)$ we obtain
\begin{align}\label{48}
\varphi_\lambda(u) & \geq\frac{1}{2}\gamma(u^-)+\frac{\mu}{2}\|u^-\|_2^2+\frac{1}{2}\gamma(u^+)-\frac{\lambda+\varepsilon}{2}\|u^+\|_2^2
-c_9\|u^+\|_r^r\nonumber \\ \vspace{0.3cm}
& \hspace{3cm}(\mbox{see \eqref{46}, \eqref{47}})\nonumber \\ \vspace{0.3cm}
& \geq \frac{c_0}{2}\|u^-\|^2+\frac{\hat c}{2}\|u^+\|^2-c_{10}\|u\|^r\ \mbox{for some}\ c_{10}>0\nonumber \\ \vspace{0.3cm}
& \hspace{3cm}(\mbox{see \eqref{6} and Lemma \ref{Lemma 3}})\nonumber \\ \vspace{0.3cm}
& \geq c_{11}\|u\|^2-c_{10}\|u\|^r\ \mbox{for some}\ c_{11}>0.
\end{align}
Since $r>2$, from \eqref{48} we infer that $u=0$ is a strict local minimizer of $\varphi_\lambda$. It is easy to see that $K_{\varphi_\lambda}\subseteq C_{+}$ (see \eqref{46}). Thus, we may assume that $u=0$ is an isolated critical point of $\varphi_\lambda$ or otherwise we already have a whole sequence of distinct positive solutions of ($P_\lambda$) which converge to zero in $H^1(\Omega)$. Therefore we can find $\rho\in(0,1)$ small such that
\begin{align}\label{49}
& \varphi_\lambda(0)=0<\inf\left[\varphi_\lambda(u):\, \|u\|=\rho\right]=m_\rho^\lambda\\
& (\mbox{see Aizicovici, Papageorgiou \& Staicu \cite{1}, proof of Proposition 29}).\nonumber
\end{align}
Hypothesis $H_5$(ii) implies that given any $u\in\textrm{int}\, C_{+}$, we have
\begin{eqnarray}\label{50}
\varphi_\lambda(tu)\rightarrow-\infty\ \mbox{as}\ t\rightarrow+\infty.
\end{eqnarray}

\vspace*{4pt}\noindent\textbf{Claim.} \emph{$\varphi_\lambda$ satisfies the C-condition.}\vspace*{4pt}

Let $\{u_n\}_{n\geq1}\subseteq H^1(\Omega)$ such that
\begin{eqnarray}\label{51}
|\varphi_\lambda(u_n)|\leq M_3\ \mbox{for some}\ M_3>0,\ \mbox{all}\ n\in\NN,
\end{eqnarray}
\begin{eqnarray}\label{52}
(1+\|u_n\|)\varphi_\lambda'(u_n)\rightarrow0\ \mbox{in}\ H^1(\Omega)^{*}\ \mbox{as}\ n\rightarrow\infty.
\end{eqnarray}
By \eqref{52} we have
\begin{align}\label{53}
& \left|\langle A(u_n),h\rangle+\int_\Omega\limits\xi(z)u_n h\, dz+\int_{\partial\Omega}\limits\beta(z)u_n h\, d\sigma+\mu\int_\Omega\limits u_n^- h\, dz-\int_\Omega\limits k_\lambda(z,u_n)h\, dz\right|\\ \vspace{0.3cm}
& \hspace{5cm}\leq\frac{\varepsilon_n\|h\|}{1+\|u_n\|}\qquad \mbox{for all}\ h\in H^1(\Omega)\ \mbox{with}\ \varepsilon_n\rightarrow0^+.\nonumber
\end{align}
In \eqref{53} we choose $h=-u_n^-\in H^1(\Omega)$. Using \eqref{46}, we obtain
\begin{align}\label{54}
& \left|\gamma(u_n^-)+\mu\|u_n^-\|_2^2\right|\leq \varepsilon_n\ \mbox{for all}\ n\in\NN,\nonumber \\ \vspace{0.3cm}
\Rightarrow\, & c_0\|u_n^-\|^2\leq\varepsilon_n\ \mbox{for all}\ n\in\NN\ (\mbox{see \eqref{6}})\nonumber \\ \vspace{0.3cm}
\Rightarrow\, & u_n^-\rightarrow0\ \mbox{in}\ H^1(\Omega).
\end{align}
It follows from \eqref{51} and \eqref{54} that
\begin{align}\label{55}
& \gamma(u_n^+)-\int_\Omega\left[\lambda(u_n^+)^2+2F(z,u_n^+)\right]dz\leq M_4\\ \vspace{0.3cm}
& \hspace{3cm} \mbox{for some}\ M_4>0,\ \mbox{all}\ n\in\NN. \nonumber
\end{align}
On the other hand, if in \eqref{53} we choose $h=u_n^+\in H^1(\Omega)$, then
\begin{align}\label{56}
& -\gamma(u_n^+)+\int_\Omega\left[\lambda(u_n^+)^2+f(z,u_n^+)u_n^+\right]dz\leq\varepsilon_n\ \mbox{for all}\ n\in\NN\\ \vspace{0.3cm}
& \hspace{7cm} (\mbox{see \eqref{46}}).\nonumber
\end{align}
Adding \eqref{55} and \eqref{56}, we obtain
\begin{eqnarray}\label{57}
\int_\Omega\left[f(z,u_n^+)u_n^+-2F(z,u_n^+)\right]dz\leq M_5\ \mbox{for some}\ M_5>0,\ \mbox{all}\ n\in\NN.
\end{eqnarray}
Hypotheses $H_5$(i), (ii) imply that we can find $\tilde\xi_0\in(0,\xi_0)$ and $c_{12}>0$ such that
\begin{eqnarray}\label{58}
\tilde\xi_0 x^q-c_{12}\leq f(z,x)x-2 F(z,x)\ \mbox{for a.a.}\ z\in\Omega,\ \mbox{all}\ x\geq0.
\end{eqnarray}

We use \eqref{58} in \eqref{57} and infer that
\begin{eqnarray}\label{59}
\{u_n^+\}_{n\geq1}\subseteq L^q(\Omega)\ \mbox{is bounded}.
\end{eqnarray}
First, suppose that $N\neq2$. By hypothesis $H_5$(ii), we see that  we may assume
without loss of generality
that $q<r<2^{*}$. So, we can find $t\in(0,1)$ such that
\begin{eqnarray}\label{60}
\frac{1}{r}=\frac{1-t}{q}+\frac{t}{2^{*}}.\end{eqnarray}
Invoking the interpolation inequality (see, for example, Gasinski \& Papageorgiou \cite[p. 905]{6}), we have
\begin{align}\label{61}
& \|u_n^+\|_r \leq\|u_n^+\|_q^{1-t}\|u_{n}^{+}\|_{2^{*}}^t\nonumber \\ \vspace{0.3cm}
& \hspace{1cm} \leq M_6\|u_n^+\|^t\ \mbox{for some}\ M_6>0,\ \mbox{all}\ n\in\NN \ (\mbox{see \eqref{59}}),\nonumber \\ \vspace{0.3cm}
\Rightarrow\, & \|u_n^+\|_r^r\leq M_7\|u_n^+\|^{tr}\ \mbox{for some}\ M_7=M_6^r>0,\ \mbox{all}\ n\in\NN.
\end{align}
In \eqref{53} we choose $h=u_n^+\in H^1(\Omega)$. Then
\begin{align}\label{62}
& \gamma(u_n^+)\leq\varepsilon_n+\int_\Omega\left[\lambda(u_n^+)^2+f(z,u_n^+)u_n^+\right]dz\ \mbox{for all}\ n\in\NN\ (\mbox{see \eqref{46}}),\nonumber \\ \vspace{0.3cm}
\Rightarrow\, & \gamma(u_n^+)\leq c_{13}(1+\|u_n^+\|_r^r)\ \mbox{for some}\ c_{13}>0,\ \mbox{all}\ n\in\NN\nonumber \\ \vspace{0.3cm}
& \hspace{2cm}(\mbox{see hypothesis}\ H_5\mbox{(i) and recall that}\ 2<r) \nonumber \\ \vspace{0.3cm}
& \hspace{1cm}\leq c_{14}(1+\|u_n^+\|^{tr})\ \mbox{for some}\ c_{14}>0, \mbox{all}\ n\in\NN \\ \vspace{0.3cm}
& \hspace{2cm}(\mbox{see \eqref{61}}).\nonumber
\end{align}
 By hypothesis $H_5$(i), we see that we can always assume that $r$ is close to $2^{*}$, hence $q\geq2$ (see hypothesis $H_5$(ii)). Then \eqref{59} implies that $\{u_n^+\}_{n\geq1}\subseteq L^2(\Omega)$ is bounded and so by \eqref{62} we have
\begin{align}\label{63}
& \gamma(u_n^+)+\mu\|u_n^+\|_2^2\leq c_{15}(1+\|u_n^+\|^{tr})\ \mbox{for some}\ c_{15}>0,\ \mbox{all}\ n\in\NN,\nonumber \\ \vspace{0.3cm}
\Rightarrow\, & c_0\|u_n^+\|^2\leq c_{15}(1+\|u_n^+\|^{tr})\ \mbox{for all}\ n\in\NN\\ \vspace{0.3cm}
& \hspace{6cm}(\mbox{see \eqref{6}}).\nonumber
\end{align}
Due to \eqref{60} and hypothesis $H_5$(ii), we see that
\begin{align}\label{64}
& t\,r<2,\nonumber \\ \vspace{0.3cm}
\Rightarrow\, & \{u_n^+\}_{n\geq1}\subseteq H^1(\Omega)\ \mbox{is bounded (see \eqref{63})}.
\end{align}
If $N=2$, then $2^{*}=+\infty$ and the Sobolev embedding theorem says that $H^1(\Omega)\hookrightarrow L^\eta(\Omega)$ for all $\eta\in[1,+\infty)$. Let $\eta>r>q$ and $t\in(0,1)$ such that
\begin{align*}
& \frac{1}{r}=\frac{1-t}{q}+\frac{t}{\eta},\\ \vspace{0.3cm}
\Rightarrow\, & t\,r=\frac{\eta(r-q)}{\eta-q}.
\end{align*}
Note that
$$\frac{\eta(r-q)}{\eta-q}\rightarrow r-q\ \mbox{as}\ \eta\rightarrow+\infty=2^{*}\ \mbox{and}\ r-q<2\ (\mbox{see hypothesis}\ H_5\mbox{(ii)}).$$
Therefore the previous argument works if instead of $2^{*}$ we use $\eta>1$ big such that  $t\,r<2$. We again obtain \eqref{64}. It follows from \eqref{54} and \eqref{64} that
$$\{u_n\}_{n\geq1}\subseteq H^1(\Omega)\ \mbox{is bounded}.$$
Thus, we may assume that
\begin{eqnarray}\label{65}
u_n\xrightarrow{w} u\ \mbox{in}\ H^1(\Omega)\ \mbox{and}\ u_n\rightarrow u\ \mbox{in}\ L^r(\Omega)\ \mbox{and in}\ L^2(\partial\Omega).
\end{eqnarray}
In \eqref{53} we choose $h=u_n-u\in H^1(\Omega)$, pass to the limit as $n\rightarrow\infty$ and use \eqref{65}. Then
\begin{align*}
& \lim_{n\rightarrow\infty}\langle A(u_n),u_n-u\rangle=0,\\ \vspace{0.3cm}
\Rightarrow\, & \|Du_n\|_2\rightarrow\|Du\|_2,
\end{align*}\begin{align*}\Rightarrow\, & Du_n\rightarrow Du\ \mbox{in}\ L^2(\Omega,\RR^N)\\ \vspace{0.3cm}
& \vspace{5cm}\ (\mbox{by the Kadec-Klee property, see \eqref{65}})\\ \vspace{0.3cm}
\Rightarrow\, & u_n\rightarrow u\ \mbox{in}\ H^1(\Omega)\ (\mbox{see \eqref{65}}).
\end{align*}
Therefore $\varphi_\lambda$ satisfies the C-condition and this proves the Claim.

Then \eqref{49}, \eqref{50} and the Claim, permit the use of Theorem \ref{Theorem 1} (the mountain pass theorem) and so we can find $u_\lambda\in H^1(\Omega)$ such that
\begin{align*}
& u_{\lambda}\in K_{\varphi_\lambda}\ \mbox{and}\ m_\rho^\lambda\leq\varphi_\lambda(u_\lambda),\\ \vspace{0.3cm}
\Rightarrow\, & u_\lambda\neq0\ (\mbox{see \eqref{49}) and so}\ u_\lambda\in S(\lambda)\subseteq\textrm{int}\, C_{+}.
\end{align*}
\end{proof}

Next, we show that for every $\lambda\in(-\infty,\widehat\lambda_1)$, problem ($P_\lambda$) has a smallest positive solution.

First, let us recall the following lemma from Hu and Papageorgiou \cite[p. 178]{7}.

\vspace*{4pt}\noindent
{\bf Lemma A.} {\em If $(\Omega, \sum, \mu)$ is a finite measure space and ${\mathcal D}$ is a family of $\RR_+$-valued measurable functions which is downward directed, then there exists a unique (modulo equality $\mu$-a.e.) function $h:\Omega\rightarrow \RR_+$ such that\\
(a) $h(\omega)\leq u(\omega)$ $\mu$-a.e. in $\Omega$ for all $u\in {\mathcal D}$;\\
(b) if $g:\Omega\rightarrow \RR$ is a measurable function such that
$$g(\omega)\leq u(\omega)\ \ \mbox{$\mu$-a.e. in $\Omega$ for all $u\in {\mathcal D}$}$$
then $g(\omega)\leq h(\omega)$ $\mu$-a.e.,
that is, $h=\inf {\mathcal D}$. Moreover, there is a decreasing sequence $\{u_n\}_{n\geq 1}\subseteq{\mathcal D}$ such that $\inf_{n\geq 1}u_n=h=\inf{\mathcal D}$.}

\begin{proposition}\label{Proposition 17}
If hypotheses $H(\xi)'$, $H(\beta)$, $H_5$ hold and $\lambda\in\mathcal{L}=(-\infty,\widehat\lambda_1)$, then problem ($P_\lambda$) admits a smallest positive solution $\bar u_\lambda\in\textrm{int}\, C_{+}$.
\end{proposition}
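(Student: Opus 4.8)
The plan is to mirror the construction of the minimal positive solution from the sublinear case (Proposition \ref{Proposition 9}), but to replace the lower bound supplied there by the auxiliary problem $(Au_\lambda)$ — which is unavailable here, since hypothesis $H_5$(iii) rules out the concave minorant \eqref{14} — by a uniform lower bound that comes directly from the spectral gap $\widehat{\lambda}_1-\lambda>0$.

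First I would record the key a priori estimate, which plays the role of Proposition \ref{Proposition 8}. If $u\in S(\lambda)$, then testing the weak form of $(P_\lambda)$ with $h=u$ gives $\gamma(u)=\lambda\|u\|_2^2+\int_\Omega f(z,u)u\,dz$, while the variational characterization \eqref{2} yields $\gamma(u)\geq\widehat{\lambda}_1\|u\|_2^2$; combining these, $(\widehat{\lambda}_1-\lambda)\|u\|_2^2\leq\int_\Omega f(z,u)u\,dz$. Now fix $\varepsilon=\tfrac12(\widehat{\lambda}_1-\lambda)>0$; by $H_5$(iii) there is $\delta>0$ with $f(z,x)\leq\varepsilon x$ for a.a. $z\in\Omega$ and all $0\leq x\leq\delta$. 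Hence if some $u\in S(\lambda)$ had $\|u\|_\infty\leq\delta$, then $\int_\Omega f(z,u)u\,dz\leq\varepsilon\|u\|_2^2$ and the estimate above would force $(\widehat{\lambda}_1-\lambda)\|u\|_2^2\leq\varepsilon\|u\|_2^2$, i.e. $u=0$, a contradiction. Therefore every $u\in S(\lambda)$ satisfies $\|u\|_\infty>\delta$.

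Next I would show that $S(\lambda)$ is downward directed, exactly as in Proposition \ref{Proposition 9} following Filippakis \& Papageorgiou \cite{5}: given $u_1,u_2\in S(\lambda)$, the minimum $\bar u=\min\{u_1,u_2\}$ is an upper solution of $(P_\lambda)$, and a truncation argument produces $u\in S(\lambda)$ with $u\leq u_1$ and $u\leq u_2$. Then Lemma A of Hu \& Papageorgiou \cite{7} provides a decreasing sequence $\{u_n\}_{n\geq1}\subseteq S(\lambda)$ with $\inf_{n\geq1}u_n=\inf S(\lambda)=:\bar u_\lambda$. Since $0\leq u_n\leq u_1$ with $u_1\in L^\infty(\Omega)$, the sequence has a uniform $L^\infty$-bound, so the regularity scheme of Proposition \ref{Proposition 4} (Lemmata 5.1 and 5.2 of Wang \cite{14}) yields $M_1>0$ and $\alpha\in(0,1)$ with $\|u_n\|_{C^{1,\alpha}(\overline\Omega)}\leq M_1$ for all $n$. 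By the compact embedding $C^{1,\alpha}(\overline\Omega)\hookrightarrow C^1(\overline\Omega)$ together with the monotonicity of $\{u_n\}$, one gets $u_n\to\bar u_\lambda$ in $C^1(\overline\Omega)$, and passing to the limit in the weak formulation satisfied by each $u_n$ shows that $\bar u_\lambda$ is a solution of $(P_\lambda)$.

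The main obstacle — the point at which the superlinear case genuinely departs from the sublinear one — is to exclude $\bar u_\lambda=0$, and here the a priori estimate of the second paragraph is decisive: each $u_n\in S(\lambda)$ satisfies $\|u_n\|_\infty>\delta$, so the $C^1$-convergence gives $\|\bar u_\lambda\|_\infty=\lim_n\|u_n\|_\infty\geq\delta>0$, whence $\bar u_\lambda\neq0$. Consequently $\bar u_\lambda\in S(\lambda)\subseteq\textrm{int}\,C_+$ by Proposition \ref{Proposition 4}, and by construction $\bar u_\lambda=\inf S(\lambda)$ is the smallest positive solution of $(P_\lambda)$. The one point requiring care is the \emph{uniformity} of the regularity bound, which is precisely why it is essential to work below the fixed solution $u_1$; once the spectral-gap estimate and this uniform bound are in place, the nontriviality of the limit follows at once.
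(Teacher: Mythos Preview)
Your proof is correct, and the overall skeleton (downward directedness via \cite{5}, Lemma~A to extract a decreasing sequence, pass to the limit, then rule out $\bar u_\lambda=0$) matches the paper's. The genuine difference is in the last step, where you and the paper diverge.

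The paper argues by contradiction via a \emph{normalization} argument: assuming $\bar u_\lambda=0$, it sets $y_n=u_n/\|u_n\|$, shows (using $H_5$(iii) and $0\leq u_n\leq u_1\in L^\infty(\Omega)$) that $N_f(u_n)/\|u_n\|\xrightarrow{w}0$ in $L^2(\Omega)$, upgrades $y_n\xrightarrow{w}y$ to $y_n\to y$ in $H^1(\Omega)$ via the Kadec--Klee property, and then passes to the limit in the equation to obtain $\gamma(y)=\lambda\|y\|_2^2<\widehat\lambda_1\|y\|_2^2$, contradicting \eqref{2}. This stays entirely at the level of weak $H^1$-compactness; no regularity beyond boundedness in $H^1(\Omega)$ is needed for this proposition.

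You instead establish an \emph{a priori} uniform lower bound $\|u\|_\infty>\delta$ for every $u\in S(\lambda)$, directly from the spectral gap and $H_5$(iii), and then transport it to the limit via uniform $C^{1,\alpha}$-regularity and the compact embedding $C^{1,\alpha}(\overline\Omega)\hookrightarrow C^1(\overline\Omega)$. This is more elementary at the nontriviality step --- no blow-up, no Kadec--Klee --- but it costs you the regularity bootstrap (which the paper only invokes later, in the proof of Proposition~\ref{Proposition 10}, for the continuity of $\lambda\mapsto\bar u_\lambda$). Both routes exploit exactly the same two ingredients ($\widehat\lambda_1-\lambda>0$ and $f(z,x)/x\to 0$ as $x\to 0^+$); yours packages them as a quantitative lower bound on the solution set, the paper's as a qualitative obstruction in the limit.
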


\begin{proof}
We again have that $S(\lambda)$ is downward directed (that is, if $u_1,u_2\in S(\lambda)$, then we can find $u\in S(\lambda)$ such that  $u\leq u_1$, $u\leq u_2$; see Filippakis \& Papageorgiou \cite{5}). Therefore Lemma A above implies that we can find a decreasing sequence $\{u_n\}_{n\geq1}\subseteq S(\lambda)$ such that
$$\inf S(\lambda)=\inf_{n\geq1}u_n.$$
We have
\begin{align}\label{66}
& \langle A(u_n),h\rangle+\int_\Omega\xi(z)u_n h\, dz+\int_{\partial\Omega}\beta(z)u_n h\,d\sigma=\int_{\Omega}[\lambda u_n+f(z,u_n)]hdz\\ \vspace{0.3cm}
& \hspace{7cm}\ \mbox{for all}\ h\in H^1(\Omega),\ \mbox{all}\ n\in\NN.\nonumber
\end{align}
Choosing $h=u_n\in H^1(\Omega)$\ \mbox{in \eqref{66}}, we obtain
\begin{eqnarray}\label{67}
\gamma(u_n)=\lambda\|u_n\|_2^2+\int_\Omega f(z,u_n)u_n dz\ \mbox{for all}\ n\in\NN.
\end{eqnarray}
Since $u_{n}\leq u_1\in\textrm{int}\, C_+$ for all $n\in\NN$ (recall that $\{u_n\}_{n\geq1}\subseteq H^1(\Omega)$ is decreasing) and by \eqref{67}, \eqref{6} and hypotheses $H(\beta)'$, $H(\beta)$, $H_5$(i), we can conclude that
$$\{u_n\}_{n\geq1}\subseteq H^1(\Omega)\ \mbox{is bounded}.$$
Hence, we may assume that
\begin{eqnarray}\label{68}
u_n\xrightarrow{w}\bar u_\lambda\ \mbox{in}\ H^1(\Omega)\ \mbox{and}\ u_n\rightarrow\bar u_\lambda\ \mbox{in}\ L^2(\Omega)\ \mbox{and in}\ L^2(\partial\Omega).
\end{eqnarray}

Returning to \eqref{66}, passing to the limit as $n\rightarrow\infty$ and using \eqref{68}, we obtain
\begin{align*}
& \langle A(\bar u_\lambda),h\rangle+\int_\Omega\xi(z)\bar u_\lambda h\, dz+\int_{\partial\Omega}\beta(z)\bar u_\lambda h\,d\sigma=\int_{\Omega}[\lambda\bar u_\lambda+f(z,\bar u_\lambda)]h\, dz\\ \vspace{0.3cm}
& \hspace{8cm}\ \mbox{for all}\ h\in H^1(\Omega),\\ \vspace{0.3cm}
\Rightarrow\, & \bar u_\lambda\geq0\ \mbox{is a solution of}\ (P_\lambda).
\end{align*}

We will show that $\bar u_\lambda\neq0$. Arguing by contradiction, suppose that $\bar u_\lambda=0$. Let $y_n=\frac{u_n}{\|u_n\|}$, $n\in\NN$. Then $\|y_n\|=1$, $y_n\in\textrm{int}\, C_+$ for all $n\in\NN$. So we may assume that
\begin{eqnarray}\label{69}
y_n\xrightarrow{w}y\ \mbox{in}\ H^1(\Omega)\ \mbox{and}\ u_n\rightarrow y\ \mbox{in}\ L^r(\Omega)\ \mbox{and}\ L^{2}(\partial\Omega).
\end{eqnarray}
By \eqref{66} we have
\begin{align}\label{70}
& \langle A(y_n),h\rangle+\int_\Omega\xi(z)y_n h\, dz+\int_{\partial\Omega}\beta(z)y_n h\, d\sigma=\int_\Omega\left[\lambda y_n+\frac{N_f(u_n)}{\|u_n\|}\right]h\, dz\\ \vspace{0.3cm}
& \hspace{8cm}\ \mbox{for all}\ h\in H^1(\Omega),\ \mbox{all}\ n\in\NN.\nonumber
\end{align}
Let $\rho=\|u_1\|_\infty$. Hypotheses $H_5$(i), (iii) imply that
\begin{align}\label{71}
& f(z,x)\leq c_{16}\, x\ \mbox{for a.a.}\ z\in\Omega,\ \mbox{all}\ x\in[0,\rho],\ \mbox{some}\ c_{16}>0,\nonumber\\ \vspace{0.3cm}
\Rightarrow\, & \frac{f(z,u_n(z))}{\|u_n\|}\leq c_{18}\, y_n(z)\ \mbox{for a.a.}\ z\in\Omega,\ \mbox{all}\ n\in\NN,\nonumber\\ \vspace{0.3cm}
\Rightarrow\, & \left\{\frac{N_f(u_n)}{\|u_n\|}\right\}_{n\geq1}\subseteq L^2(\Omega)\ \mbox{is bounded (see \eqref{69})}.
\end{align}
By passing to a subsequence if necessary and using hypothesis $H_5$(iii) we infer that
\begin{eqnarray}\label{72}
\frac{N_f(u_n)}{\|u_n\|}\xrightarrow{w}0\ \mbox{in}\ L^2(\Omega)
\end{eqnarray}
(see Aizicovici, Papageorgiou \& Staicu \cite{1}, proof of Proposition 14). In \eqref{70}, we first choose $h=y_n-y\in H^1(\Omega)$, pass to the limit as $n\rightarrow\infty$ and use \eqref{69} and \eqref{71}. Then
\begin{align}
& \lim_{n\rightarrow\infty}\langle A(y_n),y_n-y\rangle=0,\nonumber\\ \vspace{0.3cm}
\label{73} \Rightarrow\, & y_n\rightarrow y\ \mbox{in}\ H^1(\Omega)\\
& \hspace{2cm}\ (\mbox{by the Kadec-Klee property})\nonumber\\ \vspace{0.3cm}
\label{74} \Rightarrow\, & \|y\|=1.
\end{align}

Next in \eqref{70} we choose $h=y_n\in H^1(\Omega)$ and pass to the limit as $n\rightarrow\infty$. Using \eqref{72} and \eqref{73} we obtain
$$\gamma(y)=\lambda\|y\|_{2}^{2}<\widehat\lambda_1\|y\|_2^2\ (\mbox{see \eqref{74}}),$$
a contradiction to \eqref{2}. Hence, $\bar u_\lambda\neq0$ and therefore
$$\bar u_\lambda\in S(\lambda)\subseteq\textrm{int}\, C_+,\ \bar u_\lambda=\inf S(\lambda).$$
\end{proof}

Reasoning as in the proof of Proposition \ref{Proposition 10}, we can establish the monotonicity and continuity properties of the map $\lambda\mapsto\bar u_\lambda$.

\begin{proposition}\label{Proposition 18}
If hypotheses $H(\xi)'$, $H(\beta)$, $H_5$ hold, then the map $\lambda\mapsto\bar u_\lambda$ from $\mathcal{L}=(-\infty,\widehat\lambda_1)$ into $C^1(\overline\Omega)$ is nondecreasing and continuous.
\end{proposition}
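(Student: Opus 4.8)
The plan is to mirror Proposition~\ref{Proposition 10}: first prove that $\lambda\mapsto\bar u_\lambda$ is nondecreasing, then bootstrap this to two-sided continuity through a monotone-sequence argument and the global regularity of Wang~\cite{14}. For the monotonicity, fix $\lambda<\eta<\widehat\lambda_1$ and let $\bar u_\eta\in\textrm{int}\,C_+$ be the minimal positive solution of $(P_\eta)$ from Proposition~\ref{Proposition 17}. Since $\lambda<\eta$ and $\bar u_\eta>0$, the function $\bar u_\eta$ is a supersolution of $(P_\lambda)$. Following \eqref{28}, I would freeze the reaction of $(P_\lambda)$ at the level $\bar u_\eta$, forming the truncated $C^1$-functional $w_\lambda$, which is coercive and sequentially weakly lower semicontinuous by \eqref{6}. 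A global minimizer $\tilde u_\lambda$ then exists, and testing its Euler equation with $-\tilde u_\lambda^-$ and with $(\tilde u_\lambda-\bar u_\eta)^+$ (exactly as in Proposition~\ref{Proposition 8}) confines $\tilde u_\lambda$ to the order interval $[0,\bar u_\eta]$, where the truncation is inactive; hence a nontrivial $\tilde u_\lambda$ belongs to $S(\lambda)$ and yields $\bar u_\lambda\le\tilde u_\lambda\le\bar u_\eta$.

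The decisive obstacle is precisely the nontriviality of $\tilde u_\lambda$. In Proposition~\ref{Proposition 10} the concave term of $H_1$(iii) forced $w_\lambda(t\hat u_1)<0$ for small $t$, so the global minimum was negative and automatically nonzero. Under $H_5$ there is no such term: by the computation leading to \eqref{48}, $u=0$ is a strict local minimizer of $w_\lambda$, and for $u\in[0,\bar u_\eta]$ one has $w_\lambda(u)=\frac12[\gamma(u)-\lambda\|u\|_2^2]-\int_\Omega F(z,u)\,dz$, a quantity that need not dip below $0$. Consequently plain minimization may only return $\tilde u_\lambda=0$, and one must instead extract a \emph{nontrivial} critical point of $w_\lambda$, e.g.\ by a mountain-pass argument built on the fact that $0$ is a strict local minimizer together with the superlinear growth of $F$ below the cap $\bar u_\eta$ (from $H_5$(ii)), or by a degree-theoretic count in $[0,\bar u_\eta]$. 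I expect this to be the heart of the proof.

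Granting monotonicity, continuity follows the template of Proposition~\ref{Proposition 10}. For a monotone sequence $\lambda_n\to\lambda$ in $\mathcal L$, the solutions $\{\bar u_{\lambda_n}\}$ are monotone and, by \eqref{6}, Lemma~\ref{Lemma 3} and $H_5$(i), bounded in $H^1(\Omega)$; passing to a weak limit and then to the limit in the weak formulation \eqref{66} produces a solution of $(P_\lambda)$. Nontriviality of the limit is free here, in contrast with the previous paragraph: an increasing sequence is bounded below by its first term $\bar u_{\lambda_1}\in\textrm{int}\,C_+$ and a decreasing one by $\bar u_\lambda$, so the limit lies in $S(\lambda)\subseteq\textrm{int}\,C_+$. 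The uniform $C^{1,\alpha}(\overline\Omega)$ bound of Wang~\cite{14} (as in \eqref{36}) and the compact embedding $C^{1,\alpha}(\overline\Omega)\hookrightarrow C^1(\overline\Omega)$ then upgrade the convergence to $C^1(\overline\Omega)$.

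It remains to identify the limit with $\bar u_\lambda$. For $\lambda_n\uparrow\lambda$ this is immediate and clean, as in Proposition~\ref{Proposition 10}: the increasing limit $\hat u$ satisfies $\hat u\le\bar u_\lambda$ by monotonicity and $\hat u\ge\bar u_\lambda$ because $\hat u\in S(\lambda)$, whence $\hat u=\bar u_\lambda$ (equivalently, the $C^1$-contradiction of Proposition~\ref{Proposition 10} applies). For $\lambda_n\downarrow\lambda$ the decreasing limit $\tilde u$ only gives $\tilde u\ge\bar u_\lambda$, and the contradiction used for left continuity now points the wrong way; one must separately show that $\tilde u$ is the \emph{minimal} solution of $(P_\lambda)$. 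This right-continuity is the second place where the superlinear case genuinely departs from Proposition~\ref{Proposition 10}: the idea would be that each $\bar u_{\lambda_n}$ is the least solution lying above the strict subsolution $\bar u_\lambda$ of $(P_{\lambda_n})$ (since $\lambda_n>\lambda$), and one must rule out that these leave a gap above $\bar u_\lambda$ in the limit. I regard the nontriviality step of the monotonicity and this right-continuity as the two main difficulties.
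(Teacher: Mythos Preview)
Your reconstruction is faithful: the paper's entire proof of Proposition~\ref{Proposition 18} is the single sentence ``Reasoning as in the proof of Proposition~\ref{Proposition 10}, we can establish the monotonicity and continuity properties of the map $\lambda\mapsto\bar u_\lambda$.'' You have correctly located the two places where this transfer is not automatic.

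On continuity: note that the paper's own summary, Theorem~\ref{Theorem 21}(c), asserts only \emph{left} continuity, which is precisely what the template of Proposition~\ref{Proposition 10} yields. The word ``continuous'' in Proposition~\ref{Proposition 18} thus appears to be an overstatement (it mirrors Proposition~\ref{Proposition 12}, where uniqueness---unavailable here---upgrades left to full continuity). Your right-continuity concern is therefore about the statement rather than about the argument the paper actually intends; nothing downstream uses right continuity.

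On monotonicity you have identified a genuine gap. Proposition~\ref{Proposition 10} forces the truncated minimizer $\tilde u_\lambda\neq0$ via the concave estimate $H_1$(iii) and the test $t\hat u_1$ as in~\eqref{12}; under $H_5$(iii) one has $f(z,x)=o(x)$ as $x\to 0^+$, so by the computation leading to~\eqref{48} the origin is a \emph{strict} local minimizer of $w_\lambda$, and the global minimum of the coercive truncated functional may well be $0$ (for $f(x)=x^{q-1}$ with $q>2$ one checks directly that $w_\lambda(t\bar u_\eta)>0$ for all $t\in(0,1]$). Your proposed mountain pass on $w_\lambda$ cannot rescue this as stated: the truncation at $\bar u_\eta$ caps the superlinear growth, so $w_\lambda$ is coercive and there is no second low point to link to. The paper offers no substitute for this step; closing it would require a different device---for instance a monotone iteration from the supersolution $\bar u_\eta$ combined with the normalization argument of Proposition~\ref{Proposition 17} to exclude collapse to zero, or a topological degree count in the order interval $[0,\bar u_\eta]$. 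As written, neither the paper nor your sketch fills this gap.
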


As before (see Proposition \ref{Proposition 11} and hypotheses $H_2$), by strengthening the conditions on $f(z,\cdot)$, we can improve the monotonicity of $\lambda\mapsto\bar u_\lambda$.

The new conditions on $f(z,x)$ are the following:
\begin{enumerate}
\item [$H_{6}$:] $f:\Omega\times\RR\rightarrow\RR$ is a Carath\'{e}odory function such that  $f(z,0)=0$ for a.a. $z\in\Omega$, hypotheses $H_6$(i), (ii), (iii) are the same as the corresponding hypotheses $H_5$(i), (ii), (iii) and
    \begin{enumerate}
    \item [(iv)] for every $\rho>0$, there exists $\hat\xi_\rho>0$ such that  for a.a. $z\in\Omega$, the function
    $$x\mapsto f(z,x)+\hat\xi_\rho x$$
    is nondecreasing on $[0,\rho]$.
    \end{enumerate}
\end{enumerate}\vspace{0.3cm}

Reasoning as in the proof of Proposition \ref{Proposition 9}, we obtain.

\begin{proposition}\label{Proposition 19}
If hypotheses $H(\xi)'$, $H(\beta)$, $H_6$ hold, then the map $\lambda\mapsto\bar u_\lambda$ from $\mathcal{L}=(-\infty,\widehat\lambda_1)$ into $C^1(\overline\Omega)$ is strictly increasing (that is, if $\lambda<\eta<\widehat\lambda_1$, then $\bar u_\eta-\bar u_\lambda\in\textrm{int}\, C_+$).
\end{proposition}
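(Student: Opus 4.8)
The plan is to follow the template of the proof of Proposition \ref{Proposition 11}, replacing the sublinear monotonicity input (Proposition \ref{Proposition 10}) by its superlinear counterpart (Proposition \ref{Proposition 18}). First I would fix $\lambda<\eta<\widehat\lambda_1$ and let $\bar u_\lambda\in S(\lambda)$ and $\bar u_\eta\in S(\eta)$ be the minimal positive solutions supplied by Proposition \ref{Proposition 17}; both belong to $\textrm{int}\, C_+$. Proposition \ref{Proposition 18} already furnishes the non-strict ordering $\bar u_\lambda\leq\bar u_\eta$, which is the starting point for upgrading to the membership $\bar u_\eta-\bar u_\lambda\in\textrm{int}\, C_+$.

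Next I would set $\rho=\|\bar u_\eta\|_\infty$ and invoke hypothesis $H_6$(iv) to produce $\hat\xi_\rho>0$; enlarging $\hat\xi_\rho$ if necessary (the monotonicity in $H_6$(iv) persists under increasing the constant, since adding a nondecreasing linear term preserves it), I may assume that $x\mapsto\eta x+f(z,x)+\hat\xi_\rho x$ is nondecreasing on $[0,\rho]$ for a.a. $z\in\Omega$. The heart of the argument is then the pointwise strict differential inequality
\begin{align*}
-\Delta\bar u_\lambda+(\xi+\hat\xi_\rho)\bar u_\lambda
&=\lambda\bar u_\lambda+f(z,\bar u_\lambda)+\hat\xi_\rho\bar u_\lambda\\
&<\eta\bar u_\lambda+f(z,\bar u_\lambda)+\hat\xi_\rho\bar u_\lambda\\
&\leq\eta\bar u_\eta+f(z,\bar u_\eta)+\hat\xi_\rho\bar u_\eta
=-\Delta\bar u_\eta+(\xi+\hat\xi_\rho)\bar u_\eta\ \ \mbox{for a.a.}\ z\in\Omega,
\end{align*}
where the strict middle step uses $\lambda<\eta$ together with $\bar u_\lambda\in\textrm{int}\, C_+$ (so that $\bar u_\lambda>0$ on $\overline\Omega$), and the final inequality uses $\bar u_\lambda\leq\bar u_\eta$ and the monotonicity of $x\mapsto\eta x+f(z,x)+\hat\xi_\rho x$ just arranged.

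Rearranging and invoking $H(\xi)'$ (namely $\xi\leq\xi^+\leq\|\xi^+\|_\infty$) together with $\bar u_\eta-\bar u_\lambda\geq0$, I would obtain
$$\Delta(\bar u_\eta-\bar u_\lambda)\leq(\|\xi^+\|_\infty+\hat\xi_\rho)(\bar u_\eta-\bar u_\lambda)\ \ \mbox{for a.a.}\ z\in\Omega.$$
Since $w:=\bar u_\eta-\bar u_\lambda\geq0$ is nontrivial (the equality $\bar u_\lambda=\bar u_\eta$ would force $(\lambda-\eta)\bar u_\lambda=0$, impossible because $\bar u_\lambda>0$ and $\lambda\neq\eta$), the strong maximum principle (see Gasinski \& Papageorgiou \cite[p. 738]{6}) yields $\bar u_\eta-\bar u_\lambda\in\textrm{int}\, C_+$, which is precisely the asserted strict monotonicity. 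I expect the only delicate point to be the strict inequality displayed above: because $f(z,\cdot)$ itself is not assumed monotone, the strictness must be routed through the factor $\bar u_\lambda>0$ and the gap $\eta-\lambda>0$ rather than through $f$, and this is exactly what the splitting into the two separate comparison steps — together with the freedom to enlarge $\hat\xi_\rho$ so as to neutralize a possibly negative $\eta$ — is designed to accomplish.
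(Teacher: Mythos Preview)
Your proposal is correct and follows essentially the same route as the paper, which does not give an independent proof of Proposition \ref{Proposition 19} but simply refers back to the argument of Proposition \ref{Proposition 11} (with Proposition \ref{Proposition 18} supplying the ordering $\bar u_\lambda\leq\bar u_\eta$ in place of Proposition \ref{Proposition 10}). Your explicit two-step splitting of the strict inequality and the remark that $\hat\xi_\rho$ may be enlarged to absorb a possibly negative $\eta$ make transparent a point the paper's Proposition \ref{Proposition 11} passes over tersely, but the overall strategy---strict differential inequality plus strong maximum principle---is identical.
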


Next, we show that for all admissible $\lambda\in\mathcal{L}=(-\infty,\widehat\lambda_1)$, problem ($P_\lambda$) has at least two positive solutions, which are ordered.

\begin{proposition}\label{Proposition 20}
If hypotheses $H(\xi)'$, $H(\beta)$, $H_6$ hold and $\lambda\in\mathcal{L}=(-\infty,\widehat\lambda_1)$, then problem ($P_\lambda$) admits at least two positive solutions
$$u_\lambda,\hat u_\lambda\in\textrm{int}\, C_+,\ \hat{u}_\lambda-u_\lambda\in\textrm{int}\, C_+.$$
\end{proposition}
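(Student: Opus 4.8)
The plan is to take as the first solution the smallest positive solution $u_{\lambda}:=\bar u_{\lambda}\in\textrm{int}\,C_{+}$ furnished by Proposition \ref{Proposition 17}, and to produce a second, strictly larger solution $\hat u_{\lambda}$ by a mountain pass argument. The decisive preliminary step will be to show that $u_{\lambda}$ is a local minimizer of an appropriate truncation of the energy functional of Proposition \ref{Proposition 16}. To build an upper barrier I would fix $\tau\in(\lambda,\widehat\lambda_1)$ and let $\bar u_{\tau}\in\textrm{int}\,C_{+}$ be the minimal positive solution of $(P_{\tau})$; since $\tau>\lambda$ and $f\geq0$, the function $\bar u_{\tau}$ is a supersolution of $(P_{\lambda})$, and Proposition \ref{Proposition 19} (which is where hypothesis $H_6$ is used) yields the strict ordering $\bar u_{\tau}-u_{\lambda}\in\textrm{int}\,C_{+}$.

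Next I would introduce the Carath\'eodory function obtained by freezing the reaction below $u_{\lambda}$ and above $\bar u_{\tau}$,
$$
\hat k_{\lambda}(z,x)=\left\{\begin{array}{ll}
(\lambda+\mu)u_{\lambda}(z)+f(z,u_{\lambda}(z)) & \mbox{if}\ x\leq u_{\lambda}(z)\\[0.2cm]
(\lambda+\mu)x+f(z,x) & \mbox{if}\ u_{\lambda}(z)<x\leq\bar u_{\tau}(z)\\[0.2cm]
(\lambda+\mu)\bar u_{\tau}(z)+f(z,\bar u_{\tau}(z)) & \mbox{if}\ \bar u_{\tau}(z)<x,
\end{array}\right.
$$
where $\mu>0$ is as in \eqref{6}, together with the functional $\hat\varphi_{\lambda}(u)=\frac12\gamma(u)+\frac{\mu}{2}\|u\|_2^2-\int_{\Omega}\hat K_{\lambda}(z,u)\,dz$ ($\hat K_{\lambda}$ the primitive of $\hat k_{\lambda}$). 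By \eqref{6} and the boundedness of the truncated reaction, $\hat\varphi_{\lambda}$ is coercive and sequentially weakly lower semicontinuous, hence it has a global minimizer $\tilde u$. Testing the Euler equation with $(u_{\lambda}-\tilde u)^{+}$ and with $(\tilde u-\bar u_{\tau})^{+}$, and using that $u_{\lambda}$ solves $(P_{\lambda})$ while $\bar u_{\tau}$ is a supersolution, I would locate $\tilde u\in[u_{\lambda},\bar u_{\tau}]$; thus $\tilde u$ solves $(P_{\lambda})$ and, being $\geq u_{\lambda}$, either $\tilde u\neq u_{\lambda}$ (and we are already done via the strong maximum principle below) or we may take $\tilde u=u_{\lambda}$. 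Since $\hat\varphi_{\lambda}$ agrees on $\{u\leq\bar u_{\tau}\}$ with the functional $\tilde\varphi_{\lambda}$ obtained by truncating only from below at $u_{\lambda}$, and since $\bar u_{\tau}-u_{\lambda}\in\textrm{int}\,C_{+}$ leaves room for small $C^{1}(\overline\Omega)$-perturbations, $u_{\lambda}$ is a local $C^{1}(\overline\Omega)$-minimizer of $\tilde\varphi_{\lambda}$; Proposition \ref{Proposition 2} then upgrades this to a local $H^{1}(\Omega)$-minimizer.

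Finally I would apply the mountain pass theorem (Theorem \ref{Theorem 1}) to $\tilde\varphi_{\lambda}$. The lower truncation is a bounded perturbation, so $\tilde\varphi_{\lambda}$ satisfies the C-condition by the argument already carried out in the Claim of Proposition \ref{Proposition 16}, while superlinearity of $f$ gives $\tilde\varphi_{\lambda}(u_{\lambda}+te)\to-\infty$ as $t\to+\infty$ for any $e\in\textrm{int}\,C_{+}$. Assuming $u_{\lambda}$ is an isolated critical point (otherwise there are infinitely many positive solutions of $(P_{\lambda})$ and the conclusion holds trivially), its being a local $H^{1}$-minimizer provides $\rho>0$ with $\tilde\varphi_{\lambda}(u_{\lambda})<\inf\{\tilde\varphi_{\lambda}(u):\|u-u_{\lambda}\|=\rho\}$, so Theorem \ref{Theorem 1} produces a critical point $\hat u_{\lambda}$ of $\tilde\varphi_{\lambda}$ with $\tilde\varphi_{\lambda}(\hat u_{\lambda})>\tilde\varphi_{\lambda}(u_{\lambda})$ and hence $\hat u_{\lambda}\neq u_{\lambda}$. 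Testing the equation for $\hat u_{\lambda}$ with $(u_{\lambda}-\hat u_{\lambda})^{+}$ and invoking \eqref{6} gives $\hat u_{\lambda}\geq u_{\lambda}$, so the truncation is inactive and $\hat u_{\lambda}$ solves $(P_{\lambda})$; Proposition \ref{Proposition 4} yields $\hat u_{\lambda}\in\textrm{int}\,C_{+}$, and hypothesis $H_6$(iv) together with the strong maximum principle (as in Proposition \ref{Proposition 11}, see \cite[p. 738]{6}) give $\hat u_{\lambda}-u_{\lambda}\in\textrm{int}\,C_{+}$.

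I expect the main obstacle to be precisely the local-minimizer step: arranging the two-sided truncation so that the global minimizer of the coercive functional $\hat\varphi_{\lambda}$ is exactly $u_{\lambda}$, and then transferring this to a genuine local-minimizer property of the (non-coercive) functional $\tilde\varphi_{\lambda}$. The strict inequality $\bar u_{\tau}-u_{\lambda}\in\textrm{int}\,C_{+}$ from Proposition \ref{Proposition 19} and the passage from $C^{1}$- to $H^{1}$-minimizers via Proposition \ref{Proposition 2} are the technical heart of the argument; the remaining verifications (the C-condition, the sub/supersolution comparisons, and the final ordering) are routine adaptations of computations already appearing in Propositions \ref{Proposition 16}, \ref{Proposition 8} and \ref{Proposition 11}.
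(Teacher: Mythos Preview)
Your proposal is correct and follows essentially the same route as the paper: take $u_\lambda=\bar u_\lambda$, use a larger parameter $\tau\in(\lambda,\widehat\lambda_1)$ and the strict ordering from Proposition \ref{Proposition 19} to build a two-sided truncation whose coercive functional has $u_\lambda$ as global minimizer, transfer this via Proposition \ref{Proposition 2} to a local $H^1$-minimizer property for the one-sided (below-$u_\lambda$) truncated functional, and then run the mountain pass theorem using the C-condition established in Proposition \ref{Proposition 16} and hypothesis $H_6$(iv) for the final strong comparison. The only cosmetic difference is the order of presentation (the paper introduces the one-sided truncation $\tilde\psi_\lambda$ first and the two-sided $\psi_\lambda^*$ inside the Claim), and your remark that $\bar u_\tau$ is a supersolution of $(P_\lambda)$ is exactly what underlies the paper's inclusion $K_{\psi_\lambda^*}\subseteq[u_\lambda,u_\eta]$.
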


\begin{proof}
 By Proposition \ref{Proposition 16} we already have one positive solution $u_\lambda\in\textrm{int}\, C_+$. We may assume that $u_\lambda$ is the minimal positive solution (that is, $u_\lambda=\bar u_\lambda$, see Proposition \ref{Proposition 17}). We consider the following Carath\'{e}odory function
\begin{eqnarray}\label{75}
\tilde g_{\lambda}(z,x)=\left\{\begin{array}{ll}
(\lambda+\mu)u_\lambda(z)+f(z,u_\lambda(z)) & \mbox{if}\ x\leq u_\lambda(z)\\ [0.3cm]
(\lambda+\mu)x+f(z,x) & \mbox{if}\ u_\lambda(z)<x.
\end{array}\right.
\end{eqnarray}
We set $\widetilde G_\lambda(z,x)=\int_0^x\tilde g_\lambda(z,s)ds$ and consider the $C^1$-functional $\widetilde\psi_\lambda:H^1(\Omega)\rightarrow\RR$ defined by
$$\widetilde\psi_\lambda(u)=\frac{1}{2}\gamma(u)+\frac{\mu}{2}\|u\|_2^2-\int_\Omega\widetilde G_\lambda(z,u)dz\ \mbox{for all}\ u\in H^{1}(\Omega).$$
From \eqref{75}, we see that on $[u_\lambda )=\{u\in H^1(\Omega):\ u_\lambda(z)\leq u(z)\ \mbox{for a.a.}\ z\in\Omega\}$ we have
\begin{align}\label{76}
& \widetilde\psi_\lambda=\varphi_\lambda+\tilde\xi_\lambda\ \mbox{for some}\ \widetilde\xi_\lambda\in\RR,\nonumber \\ \vspace{0.3cm}
\Rightarrow\, & \widetilde\psi_\lambda\ \mbox{satisfies the C-condition}\\ \vspace{0.3cm}
& \hspace{3cm}\ (\mbox{see the Claim in the proof of Proposition \ref{Proposition 16}}).\nonumber
\end{align}

\vspace*{4pt}\noindent\textbf{Claim.} \emph{We may assume that $u_\lambda\in\textrm{int}\, C_+$ is a local minimizer of the functional $\widetilde\psi_\lambda$.}

Let $\lambda<\eta<\widehat\lambda_1$ and let $u_\eta\in S(\eta)$. From Proposition \ref{Proposition 19}, we obtain $u_\eta-u_\lambda\in\textrm{int}\, C_+$. We introduce the following truncation of $\tilde g_\lambda(z,\cdot)$:
\begin{eqnarray}\label{77}
g_{\lambda}^{*}(z,x)=\left\{\begin{array}{ll}
\widetilde g_{\lambda}(z,x) & \mbox{if}\ x\leq u_\eta(z)\\ [0.3cm]
\widetilde g_{\lambda}(z,u_\eta(z)) & \mbox{if}\ u_\eta(z)<x.
\end{array}\right.
\end{eqnarray}
Evidently, this is a Carath\'{e}odory function. We set $G_{\lambda}^{*}(z,x)=\int_0^x g_\lambda^{*}(z,s)ds$ and consider the $C^1$-functional $\psi_\lambda^{*}: H^1(\Omega)\rightarrow\RR$ defined by
$$\psi_\lambda^{*}(u)=\frac{1}{2}\gamma(u)+\frac{\mu}{2}\|u\|_2^2-\int_\Omega G_\lambda^{*}(z,u)dz\ \mbox{for all}\ u\in H^1(\Omega).$$
As before (see the proof of Proposition \ref{Proposition 10}), we can show that
\begin{eqnarray}\label{78}
K_{\psi_\lambda^{*}}\subseteq[u_\lambda,u_\eta]=\left\{u\in H^1(\Omega):\, u_\lambda(z)\leq u(z)\leq u_\eta(z)\ \mbox{for a.a.}\ z\in\Omega\right\}.
\end{eqnarray}
Moreover, by \eqref{6} and \eqref{77}, we see that $\psi_\lambda^{*}$ is coercive. It is also  sequentially weakly lower semicontinuous. So we can find $u_\lambda^{*}\in H^1(\Omega)$ such that
\begin{align*}
& \psi_\lambda^{*}(u_\lambda^{*})=\inf\left[\psi_\lambda^{*}(u):\, u\in H^1(\Omega)\right],\\ \vspace{0.3cm}
\Rightarrow\, & u_\lambda^{*}\in K_{\psi_\lambda^{*}}\subseteq[u_\lambda,u_\eta]\ (\mbox{see \eqref{78}}),\\ \vspace{0.3cm}
\Rightarrow\, & u_\lambda^{*}\in S(\lambda)\subseteq\textrm{int}\, C_{+}\ (\mbox{see \eqref{77}}).
\end{align*}

If $u_\lambda^{*}\neq u_\lambda$, then this is the second positive solution of problem ($P_\lambda$) and, as we will see in the last part of the proof, we have $u_\lambda^{*}-u_\lambda\in\textrm{int}\, C_{+}$, so we are done. So, suppose $u_\lambda^{*}=u_\lambda$. We have
$$\psi_\lambda^{*}|_{[0,u_\eta]}=\widetilde\psi_\lambda|_{[0,u_\eta]}\ (\mbox{see \eqref{75} and \eqref{77}}).$$
Since $u_\lambda=u_\lambda^{*}$ and $u_\eta-u_\lambda\in\textrm{int}\, C_+$ (see Proposition \ref{Proposition 19}), it follows that
\begin{align*}
&u_\lambda\ \mbox{is a local}\ C^1(\overline\Omega)\mbox{-minimizer of}\ \widetilde\psi_\lambda,\\ \vspace{0.3cm}
\Rightarrow\, & u_\lambda\ \mbox{is a local}\ H^1(\Omega)\mbox{-minimizer of}\ \widetilde\psi_\lambda\ (\mbox{see Proposition \ref{Proposition 2}}).
\end{align*}
This proves the Claim.

We assume that $K_{\psi_{\lambda}}$ is finite or otherwise we already have a sequence of distinct positive solutions, all strictly bigger than $u_\lambda$ (note that $K_{\widetilde\psi_\lambda}\subseteq[u_\lambda)=\left\{u\in H^1(\Omega):\, u_\lambda(z)\leq u(z)\right.$ for a.a. $\left. z\in\Omega\right\}$) and so we are done. Using the Claim, we can find $\rho\in(0,1)$ small such that
\begin{eqnarray}\label{79}
\widetilde\psi_\lambda(u_\lambda)<\inf\left[\widetilde\psi_\lambda(u):\, \|u-u_\lambda\|=\rho\right]=\widetilde m_\rho^{\lambda}.
\end{eqnarray}
Hypothesis $H_6$(ii) implies that
\begin{eqnarray}\label{80}
\widetilde\psi_\lambda(t \hat u_1)\rightarrow-\infty\ \mbox{as}\ t\rightarrow+\infty.
\end{eqnarray}
Then \eqref{76}, \eqref{79}, \eqref{80} permit the use of Theorem \ref{Theorem 1} (the mountain pass theorem). Therefore we can find $\hat u_\lambda\in H^1(\Omega)$ such that
\begin{align*}
& \hat u_\lambda\in K_{\widetilde\psi_\lambda}\subseteq[u_\lambda)\ \mbox{and}\ \tilde m_\rho^\lambda\leq\widetilde\psi_\lambda(\hat u_\lambda),\\ \vspace{0.3cm}
\Rightarrow\,  & \hat u_\lambda\in S(\lambda)\subseteq\textrm{int}\, C_+\ (\mbox{see \eqref{75}}), u_\lambda\leq\hat{u}_\lambda,\, u_\lambda\neq\hat u_\lambda.
\end{align*}
Moreover, if $\rho=\|\hat u_\lambda\|_\infty$ and $\hat\xi_\rho>0$ is as postulated by hypothesis $H_6$(iv), then
\begin{align*}
& -\Delta u_\lambda(z)+(\xi(z)+\hat\xi_\rho) u_\lambda(z)\\ \vspace{0.3cm}
& {}\hspace{0.3cm} = (\lambda+\hat\xi_\rho)u_\lambda(z)+f(z,u_\lambda(z))\\ \vspace{0.3cm}
& {}\hspace{0.3cm} \leq(\lambda+\hat\xi_\rho)\hat u_\lambda(z)+f(z,\hat u_\lambda(z))\\ \vspace{0.3cm}
& \hspace{3cm}(\mbox{recall}\ u_\lambda\leq \hat u_\lambda \ \mbox{and see hypothesis}\ H_6\mbox{(iv)}) \\ \vspace{0.3cm}
& {}\hspace{0.3cm} = -\Delta\hat u_\lambda(z)+(\xi(z)+\hat\xi_\rho)\hat u_\lambda(z)\ \mbox{for a.a.}\ z\in\Omega, 
\end{align*}\begin{align*} \vspace{0.3cm}
\Rightarrow\, & \Delta(\hat u_\lambda-u_\lambda)(z)\leq(\|\xi^+\|_\infty+\hat\xi_\rho)(\hat u_\lambda-u_\lambda)(z)\ \mbox{for a.a.}\ z\in\Omega\\ \vspace{0.3cm}
& \hspace{3cm}(\mbox{see hypothesis}\ H(\xi)')\\ \vspace{0.3cm}
\Rightarrow\, & \hat u_\lambda-u_\lambda\in\textrm{int}\, C_+\ (\mbox{by the strong maximum principle}).
\end{align*}
\end{proof}
Summarizing the situation for problem ($P_\lambda$) when the perturbation $f(z,\cdot)$ is superlinear, we can state the following theorem.

\begin{theorem}\label{Theorem 21}
If hypotheses $H(\xi)'$, $H(\beta)$, $H_{6}$ hold, then
\begin{enumerate}
\item [(a)] for all $\lambda\geq\widehat\lambda_1$ problem ($P_\lambda$) has no positive solution (that is, $S(\lambda)=\emptyset$);
\item [(b)] for every $\lambda\in\mathcal{L}=(-\infty,\widehat\lambda_1)$ problem ($P_\lambda$) has at least two positive solutions $u_\lambda$, $\hat u_\lambda\in\textrm{int}\, C_+$, $\hat u_\lambda-u_\lambda\in\textrm{int}\, C_+$;
\item [(c)] for every $\lambda\in\mathcal{L}=(-\infty,\widehat\lambda_1)$ problem ($P_\lambda$) has a smallest positive solution $\bar u_\lambda$ and the map $\lambda\mapsto\bar u_\lambda$ from $\mathcal{L}=(-\infty,\widehat\lambda_1)$ is strictly increasing and left continuous.
\end{enumerate}
\end{theorem}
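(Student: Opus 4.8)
The plan is to obtain the three assertions as a recapitulation of the propositions already established in this section, so the only genuine task is to verify that the hypothesis chain is consistent. Observe first that $H_6$(i), (ii), (iii) are literally the same as $H_5$(i), (ii), (iii), and that $H_6$ merely adjoins the local monotonicity condition $H_6$(iv). Consequently every result proved under $H(\xi)'$, $H(\beta)$, $H_5$ remains available here, and $H_6$(iv) will be invoked only where the monotonicity of $\lambda\mapsto\bar u_\lambda$ is sharpened.

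For part (a) I would cite Proposition~\ref{Proposition 15} directly: for $\lambda\geq\widehat\lambda_1$ one has $S(\lambda)=\emptyset$, the underlying argument being the Picone-identity computation with the test function $\hat u_1$ carried out exactly as in Proposition~\ref{Proposition 5}. Part (b) is precisely Proposition~\ref{Proposition 20}: the first solution $u_\lambda\in\textrm{int}\,C_+$ arises from the mountain-pass geometry of $\varphi_\lambda$ (Proposition~\ref{Proposition 16}), while the second, $\hat u_\lambda$, is produced by a second mountain-pass argument for the truncated functional $\widetilde\psi_\lambda$ localized above the minimal solution, with $\hat u_\lambda-u_\lambda\in\textrm{int}\,C_+$ following from the strong maximum principle together with $H_6$(iv).

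For part (c), the existence of a smallest positive solution $\bar u_\lambda$ for each $\lambda\in(-\infty,\widehat\lambda_1)$ is Proposition~\ref{Proposition 17}, which rests on the fact that $S(\lambda)$ is downward directed and on Lemma~A; the strict monotonicity of $\lambda\mapsto\bar u_\lambda$ is Proposition~\ref{Proposition 19}, and its continuity (hence in particular the asserted left continuity) is Proposition~\ref{Proposition 18}. Since no new estimate is required, the one point to watch is the bookkeeping just described: one must confirm that strengthening $H_5$ to $H_6$ leaves all the compactness and coercivity arguments intact, which is immediate because $H_6$ only adds the condition $H_6$(iv) to the hypotheses already in force. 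I therefore do not expect any substantive obstacle beyond this cross-referencing, the real content having been carried by Propositions~\ref{Proposition 15}--\ref{Proposition 20}.
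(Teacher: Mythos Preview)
Your proposal is correct and matches the paper's own treatment: Theorem~\ref{Theorem 21} is presented there purely as a summary (``Summarizing the situation\ldots''), with no separate proof, the content having been established in Propositions~\ref{Proposition 15}--\ref{Proposition 20} exactly as you cite them. Your observation that $H_6$ contains $H_5$ so that all earlier results under $H_5$ carry over is the only bookkeeping point, and you handle it correctly.
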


\section*{Acknowledgments} The authors wish to thank
a knowledgeable referee for remarks which helped them to improve the presentation.
This research was supported by the Slovenian Research Agency grants P1-0292, J1-7025, and J1-6721, 
and the  Romanian National Authority for Scientific
Research and Innovation, CNCS-UEFISCDI grant
PN-II-PT-PCCA-2013-4-0614.

\vspace*{10pt}

\bigskip

\end{document}